\newcommand{\sm}[1]{\left[ \begin{smallmatrix} #1 \end{smallmatrix} \right]}
\newcommand{\bmat}[1]{\begin{bmatrix} #1 \end{bmatrix}}
\newcommand{\subgrp}[1]{\langle #1 \rangle}
\newcommand{\set}[1]{\left\{ #1 \right\}}
\newcommand{\bs}[1]{\boldsymbol{#1}}
\newcommand{\wh}[1]{\widehat{#1}}
\newcommand{\ol}[1]{\overline{#1}}
\renewcommand{\mod}{\; \mathrm{mod} \;}
\DeclareMathOperator{\coker}{coker}
\DeclareMathOperator{\diag}{diag}
\DeclareMathOperator{\Ext}{Ext}
\DeclareMathOperator{\bbExt}{\mathbb{E}xt}
\DeclareMathOperator{\opH}{H}
\DeclareMathOperator{\Hom}{Hom}
\DeclareMathOperator{\id}{id}
\DeclareMathOperator{\im}{im}
\DeclareMathOperator{\RHom}{RHom}
\DeclareMathOperator{\sgn}{sgn}
\DeclareMathOperator{\Tot}{Tot}
\newcommand{\rmI}{\mathrm{I}}
\newcommand{\rmII}{\mathrm{II}}
\newcommand{\Kz}{K\!z}
\newcommand{\gotimes}{\tensor[^g]{\otimes}{}}
\newcommand{\Hbul}{\opH^\bullet}
\newcommand{\Hs}{\opH_{[s]}}
\newcommand{\Hone}{\opH_{[1]}}
\newcommand{\Hstar}{\opH_*}
\newcommand{\Hstarbul}{\Hstar^\bullet}
\newcommand{\zero}{\ol{0}}
\newcommand{\one}{\ol{1}}
\newcommand{\ev}{\textup{ev}}
\newcommand{\op}{\textup{op}}
\newcommand{\ve}{\varepsilon}
\newcommand{\G}{\mathbb{G}}
\newcommand{\N}{\mathbb{N}}
\newcommand{\Z}{\mathbb{Z}}
\newcommand{\calO}{\mathcal{O}}
\newcommand{\calP}{\mathcal{P}}
\newcommand{\calV}{\mathcal{V}}
\newcommand{\bsa}{\bs{A}}
\newcommand{\bsb}{\bs{B}}
\newcommand{\bsc}{\bs{c}}
\newcommand{\bse}{\bs{e}}
\newcommand{\bsg}{\bs{\Gamma}}
\newcommand{\bsi}{\bs{I}}
\newcommand{\bsionezero}{\bsi_0^{(1)}}
\newcommand{\bsir}{\bsi^{(r)}}
\newcommand{\bsijell}{\bsi_\ell^{(j)}}
\newcommand{\bsijzero}{\bsi_0^{(j)}}
\newcommand{\bsirell}{\bsi_\ell^{(r)}}
\newcommand{\bsirone}{\bsi_1^{(r)}}
\newcommand{\bsirzero}{\bsi_0^{(r)}}
\newcommand{\bsl}{\bs{\Lambda}}
\newcommand{\bso}{\bs{\Omega}}
\newcommand{\bsp}{\bs{\calP}}
\newcommand{\bsPi}{\bs{\Pi}}
\newcommand{\bspi}{\bs{\Pi}}
\newcommand{\bss}{\bs{S}}
\newcommand{\bsv}{\bs{\calV}}
\newcommand{\bsvzero}{\bsv_{\zero}}
\newcommand{\bscrpi}{\bsc_r^{\bspi}}
\newcommand{\bserpi}{\bse_r^{\bspi}}
\newcommand{\bsbst}{\bsb_{[s,t]}}
\newcommand{\bsbszero}{\bsb_{[s,0]}}
\newcommand{\bsbbar}{\ol{\bsb}}
\newcommand{\fS}{\mathfrak{S}}
\newcommand{\fsvec}{\mathfrak{svec}}
\newcommand{\Uone}{U_{\one}}
\newcommand{\Uzero}{U_{\zero}}
\newcommand{\Vone}{V_{\one}}
\newcommand{\Vzero}{V_{\zero}}
\newcommand{\dol}{\ol{d}}
\newcommand{\Jbar}{\ol{J}}
\newcommand{\Qbar}{\ol{Q}}
\newcommand{\Tbar}{\ol{T}}
\newcommand{\Vbar}{\ol{V}}
\newcommand{\Wbar}{\ol{W}}
\newcommand{\partialbar}{\ol{\partial}}
\newcommand{\alphabar}{\ol{\alpha}}
\newcommand{\rhobar}{\ol{\rho}}
\newcommand{\vebar}{\ol{\ve}}
\newcommand{\chat}{\wh{c}}
\newcommand{\whe}{\wh{e}}
\newcommand{\whbse}{\wh{\bse}}
\font\tencyr=wncyr10
\def\cyr{\tencyr\cyracc}
\newcommand{\Sha}{{\mbox{\cyr Sh}}}
\newcommand{\sha}{{\mbox{\cyr sh}}}
\newcommand{\Shabar}{\ol{\Sha}}
\newcommand{\shabar}{\ol{\sha}}
\numberwithin{equation}{subsection}
\newtheorem{theorem}{Theorem}[subsection]
\newtheorem{proposition}[theorem]{Proposition}
\newtheorem{corollary}[theorem]{Corollary}
\newtheorem{lemma}[theorem]{Lemma}
\theoremstyle{definition}
\newtheorem{convention}[theorem]{Convention}
\newtheorem{definition}[theorem]{Definition}
\newtheorem{remark}[theorem]{Remark}
\title[Superized Troesch complexes and cohomology]{Superized Troesch complexes and cohomology \\ for strict polynomial superfunctors}
\author{Christopher M.\ Drupieski}
\address{Department of Mathematical Sciences,
		DePaul University,
		Chicago, IL 60614, USA}
\email{c.drupieski@depaul.edu}
\author{Jonathan R. Kujawa}
\address{Department of Mathematics \\
		University of Oklahoma \\
		Norman, OK 73019, USA}
\email{kujawa@math.ou.edu}
\thanks{The first author was supported in part by Simons Collaboration Grant for Mathematicians No.\ 426905. The second author was supported in part by Simons Collaboration Grant for Mathematicians No.\ 525043.}
\date{\today}
\subjclass{Primary 18G10. Secondary 18G15, 18G40, 20G10.}
\begin{document}

\begin{abstract}
We adapt a construction due to Troesch to the category of strict polynomial superfunctors in order to construct complexes of injective objects whose cohomology is isomorphic to Frobenius twists of the (super)symmetric power functors. We apply these complexes to construct injective resolutions of the even and odd Frobenius twist functors, to investigate the structure of the Yoneda algebra of the Frobenius twist functor, and to compute other extension groups between strict polynomial superfunctors.
\end{abstract}

\maketitle

\setcounter{tocdepth}{3}

%\tableofcontents

\section{Introduction}

\subsection{Background}

Let $k$ be a field of positive characteristic $p$, and let $r$ be a positive integer. The category $\calP = \calP_k$ of strict poly\-nomial functors over $k$ was defined by Friedlander and Suslin \cite{Friedlander:1997}, as part of their proof that the cohomology ring of a finite $k$-group scheme is a finitely-generated $k$-algebra. One of their main objects of study was the extension algebra $\Ext_{\calP}^\bullet(I^{(r)},I^{(r)})$ for the $r$-th Frobenius twist functor $I^{(r)}$. In general, Friedlander and Suslin were unable to give explicit injective resolutions for objects in $\calP$. But for $p=2$ they showed that a result of Franjou, Lannes, and Schwartz \cite{Franjou:1994} could be generalized to construct, for each $n \geq 1$, an explicit injective resolution of the twisted symmetric power functor $S^{n(r)} = S^n \circ I^{(r)}$. In the case $n=1$, this injective resolution permitted an easy calculation of $\Ext_{\calP}^\bullet(I^{(r)},I^{(r)})$.

Later, Troesch \cite{Troesch:2005} showed that Friedlander and Suslin's construction could be generalized to odd characteristics, but at the cost that the underlying object of the construction was no longer a chain complex in the ordinary sense. Rather, it is a $p$-complex, i.e., a graded object $C^\bullet$ equipped with a map $d: C^i \to C^{i+1}$ such that $d^p = 0$ instead of the usual $d^2 = 0$. Contracting these $p$-complexes, one gets injective resolutions in $\calP$ of the twisted symmetric powers. The power and utility of Troesch's $p$-complexes were subsequently put on display in several papers by Touz\'{e}. In \cite{Touze:2010a}, Touz\'{e} used Troesch's $p$-complexes as an ingredient in exhibiting certain universal bifunctor cohomology classes, which were then used in work with van der Kallen \cite{Touze:2010b} to prove cohomological finite generation for arbitrary reductive groups. And in \cite{Touze:2012}, Touz\'{e} applied Troesch's $p$-complexes to give efficient recalculations of results by Franjou, Friedlander, Scorichenko, and Suslin \cite{Franjou:1999} and by Cha\l upnik \cite{Chal-upnik:2005}, as well as new results that were inaccessible via previous methods.

\subsection{Main results}

Suppose $k$ is a field of characteristic $p \geq 3$. In this paper we show how Troesch's $p$-complex construction generalizes to the category $\bsp$ of strict polynomial super\-functors over $k$. Strict polynomial superfunctors are the natural generalization to vector superspaces of the `ordinary' strict polynomial functors defined by Friedlander and Suslin. The category $\bsp$ was defined by Axtell \cite{Axtell:2013}, and was used by the first author to help prove cohomological finite generation for finite supergroup schemes \cite{Drupieski:2016}. We show that applying Troesch's construction in the category $\bsp$ yields a $p$-complex $\bsb_{p^rn}(r)$ of injective objects whose cohomology is the twisted (super)symmetric power $\bss^{n(r)}$ (Theorem \ref{theorem:B(r)-cohomology}). In contrast to the classical situation, however, the contraction $T(\bss^n,r)$ of $\bsb_{p^rn}(r)$ is not an injective resolution, but has nonzero cohomology in degrees $\ell \cdot (p^r-1)$ for all $0 \leq \ell \leq n$ (Corollary \ref{cor:T(Sn,r)-cohomology}).

Though they are less well-behaved than their classical counterparts, the superized Troesch complexes and their contractions are still useful for calculations. In the case $n=1$, we show in Section \ref{subsection:injective-resolutions} how the contracted complexes $T(\bsi,r) = T(\bss^1,r)$ and $\Tbar(\bsi,r) = \bsPi \circ T(\bsi,r) \circ \bsPi$ can be spliced together to build injective resolutions for the even and odd Frobenius twist functors
	\[
	\bsirzero: V \mapsto \Vzero^{(r)} \quad \text{and} \quad \bsirone: V \mapsto \Vone^{(r)}.
	\]
Here $\bsPi$ is (one of) the parity change functor(s) in $\bsp$. In the case $r=1$, the injective resolutions are made completely explicit through a specific choice of splicing map exhibited in Section \ref{subsec:epsilon-prime}. In Section \ref{subsec:ExtP(Ir,Ir)} we show how the injective resolutions can be used to give a more direct recalculation of the vector space structure of the extension algebra $\Ext_{\bsp}^\bullet(\bsir,\bsir)$, which was a main focus of the first author's work in \cite{Drupieski:2016}. And in Section \ref{subsec:ExtP(Ir,Ir)-multiplication} we show how the injective resolutions can be used to give relatively quick direct proofs of some of the multiplicative relations in $\Ext_{\bsp}^\bullet(\bsir,\bsir)$ that were previously inaccessible in \cite{Drupieski:2016}.

Finally, in Section \ref{section:FFSS} we look at the utility of the superized Troesch complexes for doing cohomology calculations in the spirit of work by Franjou, Friedlander, Scorichenko, and Suslin (FFSS). In \cite{Franjou:1999}, FFSS used inductive arguments based on hypercohomology spectral sequences to compute extension groups of the form $\Ext_{\calP}^\bullet(X^{*(r)},Y^{*(s)})$ when $X^*$ and $Y^*$ are classical exponential functors.\footnote{A few cases not covered by \cite{Franjou:1999} were later computed using similar techniques by Cha\l upnik in \cite{Chal-upnik:2008}; see also \cite{Touze:2014}.} One of the key ingredients to their inductive approach was the De Rham complex functor. In \cite{Drupieski:2016}, the first author showed that there is a natural super-analogue of the De Rham complex functor, but its cohomology is spread over a wider range of degrees than in the classical case (see \cite[Remark 4.1.3]{Drupieski:2016}), which presents significant complications for trying to use it to imitate the FFSS approach. On the other hand, the contraction $T(\bss^n,r)$ of the superized Troesch complex $\bsb_{p^rn}(r)$ exhibits behavior much closer to the classical De Rham complex (even better behavior, in some respects, since $T(\bss^n,r)$ consists of injective objects). In Section \ref{subsec:proof-j=1} we show how $T(\bss^n,1)$ can be used to kick off an induction argument similar to that employed by FFSS. For example, we show for $j \geq 1$ that the natural cup product maps
	\begin{align*}
	\Ext_{\bsp}^\bullet(\bsirone,S_0^{p^{r-j}(j)})^{\otimes d} &\to \Ext_{\bsp}^\bullet(\Gamma_1^{d(r)},S_0^{dp^{r-j}(j)}), \\
	\Ext_{\bsp}^\bullet(\bsirone,\Lambda_0^{p^{r-j}(j)})^{\otimes d} &\to \Ext_{\bsp}^\bullet(\Gamma_1^{d(r)},\Lambda_0^{dp^{r-j}(j)}),
	\end{align*}
factor to induce isomorphisms of graded vector spaces
	\begin{align*}
	S^d( \Ext_{\bsp}^\bullet(\bsirone,S_0^{p^{r-j}(j)}) ) &\cong \Ext_{\bsp}^\bullet(\Gamma_1^{d(r)},S_0^{dp^{r-j}(j)}), \\
	\Lambda^d( \Ext_{\bsp}^\bullet(\bsirone,\Lambda_0^{p^{r-j}(j)}) ) &\cong \Ext_{\bsp}^\bullet(\Gamma_1^{d(r)},\Lambda_0^{dp^{r-j}(j)});
	\end{align*}
see Theorem \ref{theorem:FFSS-4.5}. Here $\Gamma_1^{d(r)} = \Gamma^d \circ \bsirone$, $S_0^{n(j)} = S^n \circ \bsijzero$, and $\Lambda_0^{n(j)} = \Lambda^n \circ \bsijzero$ are classical exponential functors pre-composed with either the even or odd Frobenius twist functor. These isomorphisms are similar in form to those in the classical situation \cite[Theorem 4.5]{Franjou:1999}, although the graded vector spaces over which the symmetric and exterior powers are taken are now infinite-dimensional rather than finite-dimensional. In \cite[\S5]{Franjou:1999}, FFSS also calculated all extension groups in $\calP$ from a Frobenius twist of a `more projective' exponential functor to a Frobenius twist of a `less projective' exponential functor; we leave it as an unstated theorem that these other calculations also admit natural generalizations to superfunctors (Remark \ref{rem:unstated-theorem}).

\subsection{Future directions}

The calculations in Section \ref{section:FFSS} provide compelling evidence that many cohomology calculations in $\calP$ admit natural generalizations to $\bsp$. But the category $\bsp$ also admits interesting complications over its classical counterpart. For example, extension groups in $\bsp$ frequently appear to be much larger than their counterparts in $\calP$, and pre-composition with the Frobenius twist functor $\bsir: V \mapsto V^{(r)}$ does not, in general, induce an isomorphism on $\Hom$-groups or an injection on $\Ext$-groups in $\bsp$. % Indeed, even basic questions related to functor cohomology remain open in the super setting. For example, as far as we are aware, the rational cohomology for the supergroup $\operatorname{GL}(m|n)$ is not yet known.
It would be interesting to more fully develop the theory and computational methods of functor cohomology in this setting (e.g., for classical supergroups as in \cite{Touze:2012} and additive supercategories as in \cite{djament2021functor}).

In a somewhat different direction, for any superalgebra $A$ there is a category of strict polynomial superfunctors corresponding to the generalized Schur superalgebra $S^{A}(n,d)$ which appears in \cite{Evseev:2017,Kleshchev:2020}. See \cite{Touze:2014b} and the references therein for when $A$ is a commutative ring and where functor cohomology has applications to classical invariants from ring theory. It would be interesting to consider functor cohomology for these categories as well.

\subsection{Structure of the paper}

In Section \ref{sec:preliminaries} we review basic definitions and constructions related to strict polynomial superfunctors; for a more detailed account, the reader can consult \cite{Drupieski:2016}. In Section \ref{sec:superTroesch} we give the construction of the superized Troesch complexes, beginning in Section \ref{subsec:p-complexes} with a review of $p$-complexes, and in Section \ref{subsec:tensor-product-p-complexes} with a discussion of the K\"{u}nneth Theorem for tensor products of normal $p$-complexes (Theorem \ref{thm:Kunneth}). Most of the work for the construction comes in Sections \ref{subsec:r=1} and \ref{subsec:r-greater-1}, where we verify the cohomology of the resulting $p$-complexes. This step is more involved than in the classical (non-super) situation, because the $p$-complexes have nonzero cohomology in positive degrees. In Section \ref{section:Ext(Ir,Ir)} we look at applications of the superized Troesch complexes to the extension algebra $\Ext_{\bsp}^\bullet(\bsir,\bsir)$, and in Section \ref{section:FFSS} we look at applications to the methods of Franjou, Friedlander, Scorichenko, and Suslin.

\subsection{Conventions}

We generally follow the conventions of \cite{Drupieski:2016}. Except when indicated, $k$ will denote a field of characteristic $p \geq 3$, all vector spaces will be $k$-vector spaces, and all unadorned tensor products will denote tensor products over $k$. Given a $k$-vector space $V$, let $V^* = \Hom_k(V,k)$ be its $k$-linear dual, and let $V^{(r)} = k \otimes_{\varphi} V$ be its $r$-th Frobenius twist, the $k$-vector space obtained via base change along the $p^r$-power map $\varphi: \lambda \mapsto \lambda^{p^r}$. Given $v \in V$, let $v^{(r)} = 1 \otimes_{\varphi} v \in V^{(r)}$.

Set $\Z_2 = \Z/2\Z = \{ \zero,\one \}$. Following the literature, we use the prefix `super' to indicate that an object is $\Z_2$-graded. We denote the decomposition of a vector superspace into its $\Z_2$-homogeneous components by $V = \Vzero \oplus \Vone$, calling $\Vzero$ and $\Vone$ the even and odd subspaces of $V$, respectively, writing $\ol{v} \in \Z_2$ to denote the superdegree of a homogeneous element $v \in \Vzero \cup \Vone$. When written without additional adornment, we consider the field $k$ to be a superspace concentrated in even super\-degree. Whenever we state a formula in which homogeneous degrees of elements are specified, we mean that the formula is true as written for homogeneous elements, and that it extends linearly to non-homogeneous elements. We write $\cong$ to denote an even (i.e., degree-preserving) isomorphism, and use the symbol $\simeq$ for odd (i.e., degree-reversing) isomorphisms.

A \emph{graded superspace} (superalgebra, etc.) is a $\Z \times \Z_2$-graded vector space. Given a graded super\-space $V$ and a homogeneous element $v \in V$, we write $\deg(v)$ for the $\Z$-degree of $v$. If $V = \bigoplus_{j \in \Z} V^j$ is a graded superspace, we write $V\subgrp{i}$ for the graded superspace defined by $V\subgrp{i}^j = V^{j-i}$. We say that a graded superalgebra is \emph{graded-commutative} if for all homogeneous $a,b \in A$, one has
	\[
	ab = (-1)^{\ol{a}\cdot \ol{b} + \deg(a) \cdot \deg(b)} ba.
	\]
Given graded superalgebras $A$ and $B$, the \emph{graded tensor product} of $A$ and $B$, denoted $A \gotimes B$, is the graded superalgebra whose underlying superspace is $A \otimes B$, in which the $\Z$-grading is defined by $\deg(a \otimes b) = \deg(a) + \deg(b)$, and whose product is defined by
	\[
(a \otimes b)(c \otimes d) = (-1)^{\ol{b} \cdot \ol{c} + \deg(b) \cdot \deg(c)} (ac \otimes bd).
	\]

Let $\N = \set{0,1,2,3,\ldots}$ denote the set of non-negative integers.

\section{Preliminaries on strict polynomial superfunctors} \label{sec:preliminaries}

\subsection{The category of strict polynomial superfunctors} \label{subsec:definitions}

Let $\fsvec$ be the category whose objects are $k$-super\-spaces and whose morphisms are arbitrary $k$-linear maps between superspaces. Let $\bsv$ be the full subcategory of finite-dimensional objects in $\fsvec$.  There are natural $\Z_{2}$-gradings on sets of morphisms in $\fsvec$ and $\bsv$. %Let $\fsvec_\ev$ and $\bsv_\ev$ denote the underlying even (abelian) subcategories of $\fsvec$ and $\bsv$, having the same objects but only the even linear maps as morphisms.
Given $V,W \in \bsv$, let $T: V \otimes W \rightarrow W \otimes V$ be the supertwist map, $v \otimes w \mapsto (-1)^{\ol{v} \cdot \ol{w}} w \otimes v$. For each $n \in \N$, there exists a unique right action of the symmetric group $\fS_n$ on $V^{\otimes n}$ such that the transposition $(i,i+1) \in \fS_n$ acts as $(1_V)^{\otimes(i-1)} \otimes T \otimes (1_V)^{\otimes(n-i-1)}$. In general, for $v_1,\ldots,v_n \in V$ and $\sigma \in \fS_n$, one has
	\[
	(v_1 \otimes \cdots \otimes v_n) \cdot \sigma = (-1)^\mu \cdot v_{\sigma(1)} \otimes \cdots \otimes v_{\sigma(n)}, \quad \text{where} \quad \mu = \sum_{\substack{i < j \\\sigma(i) > \sigma(j)}} \ol{v_{\sigma(i)}} \cdot \ol{v_{\sigma(j)}}.
	\]
This extends to a right $\fS_n$-action on $\Hom_k(V^{\otimes n},W^{\otimes n})$, defined by $(\varphi \cdot \sigma)(z) = \varphi(z \cdot \sigma^{-1}) \cdot \sigma$. Then the standard superspace isomorphism $\Hom_k(V,W)^{\otimes n} \cong \Hom_k(V^{\otimes n},W^{\otimes n})$ restricts to
	\begin{equation} \label{eq:Sn-invariants}
	\bsg^n \Hom_k(V,W) := (\Hom_k(V,W)^{\otimes n})^{\fS_n} \cong \Hom_{k\fS_n}(V^{\otimes n},W^{\otimes n}).
	\end{equation}
Given $n \in \N$, define $\bsg^n \bsv$ to be the category whose objects are the same as those in $\bsv$, whose morphisms are defined by $\Hom_{\bsg^n \bsv}(V,W) = \bsg^n \Hom_k(V,W)$, and in which the composition of morphisms is defined via \eqref{eq:Sn-invariants} and the composition of $k\fS_n$-module homomorphisms.

\begin{definition} \label{def:superfunctor}
Let $n \in \N$. A \emph{strict polynomial superfunctor of degree $n$} is an even linear functor $F: \bsg^n \bsv \rightarrow \bsv$, i.e., a covariant functor such that for each $V,W \in \bsv$, the corresponding function $F_{V,W} : \bsg^n \Hom_k(V,W) \rightarrow \Hom_k(F(V),F(W))$ is an even linear map. Given degree-$n$ strict polynomial superfunctors $F$ and $G$, a homomorphism $\eta: F \rightarrow G$ of degree $\ol{\eta} \in \Z_{2}$ consists for each $V \in \bsv$ of a linear map $\eta(V) \in \Hom_k(F(V), G(V))$, such that for each $\phi \in \Hom_{\bsg^n \bsv}(V,W)$,
\[
\eta(W) \circ F(\phi) = (-1)^{\ol{\eta} \cdot \ol{\phi}} G(\phi) \circ \eta(V).
\]
We denote by $\bsp_n$ the category whose objects are the strict polynomial superfunctors of degree $n$ and whose morphisms are the homomorphisms between those functors. Set $\bsp = \bigoplus_{n \in \N} \bsp_n$.
\end{definition}

By definition, if $F \in \bsp_m$ and $G \in \bsp_n$ with $m \neq n$, then $\Hom_{\bsp}(F,G) = 0$. The category $\bsp$ is not an abelian category, though the underlying even subcategory $\bsp_\ev$ of $\bsp$, having the same objects as $\bsp$ but only the even homomorphisms, is an abelian category in which kernels and cokernels are computed ``pointwise'' in $\bsv$. For $F,G \in \bsp$, one has $\Hom_{\bsp_{\ev}}(F,G) = \Hom_{\bsp}(F,G)_{\zero}$. More generally, if $\eta \in \Hom_{\bsp}(F,G)$ is homogeneous, then the kernel, cokernel, and image of $\eta$ are also objects in $\bsp$.

\begin{remark} \label{remark:restriction-to-non-super}
Let $\bsvzero$ be the full subcategory of $\bsv$ consisting of just the purely even superspaces. Forgetting the $\Z_2$-grading, $\bsvzero$ identifies with the category $\calV$ of  finite-dimensional $k$-vector spaces. Let $F \in \bsp_n$. Forgetting the superspace structure on $F(U)$ for each $U \in \bsvzero$, it follows that $F|_{\calV} := F|_{\bsvzero}$ is an ordinary (non-super) strict polynomial functor. The map $F \mapsto F|_{\calV}$ thus defines an exact linear functor from $\bsp$ to the category $\calP$ of ordinary strict polynomial functors.%, which we call \emph{restriction from $\bsp$ to $\calP$}.
\end{remark}

\subsection{Basic examples and constructions} \label{subsec:examples}

\subsubsection{Parity change}

The parity change functors $\bspi \in \bsp_1$ and $\Pi \in \bsp_1$ act on objects by reversing the $\Z_2$-grading. On morphisms, $\bspi(\phi) = (-1)^{\ol{\phi}} \phi$ and $\Pi(\phi) = \phi$ as linear maps on the underlying vector spaces. Let $k^{0|1}$ be a one-dimensional purely odd superspace. Then $\bspi$ and $\Pi$ can be realized as $\bspi = k^{0|1} \otimes -$ and $\Pi = -\otimes k^{0|1}$, and the supertwist map induces an isomorphism $\bspi \cong \Pi$. The functor $\Pi$ can also be realized as $\Pi = \Hom_k(k^{0|1},-)$. Given $m,n \in \N$, set $k^{m|n} = k^m \oplus \Pi(k^n)$.

Given $v \in V$, let $\prescript{\pi}{}{v}$ and $v^\pi$ denote $v$ considered as an element of $\bspi(V)$ and $\Pi(V)$, respectively. Let $A$ be a $k$-superalgebra. To extend $\bspi$ and $\Pi$ to the category of left $A$-super\-modules, set $a.({}^\pi v) = (-1)^{\ol{a}} \cdot {}^\pi(a.v)$ and $a. (v^\pi) = (a.v)^\pi$. For right $A$-super\-modules, set $({}^\pi v).a = {}^\pi (v.a)$ and $(v^\pi).a = (-1)^{\ol{a}} \cdot (v.a)^\pi$. Recall that a $k$-linear map $\phi: V \rightarrow W$ is a left $A$-supermodule homomorphism if $\phi(a.v) = (-1)^{\ol{a} \cdot \ol{\phi}} a.\phi(v)$ for all homogeneous $a \in A$ and $v \in V$, and is a right homomorphism if $\phi(v.a) = \phi(v).a$. Then the maps ${}^\pi(-): v \mapsto {}^\pi v$ and $(-)^{\pi}: v \mapsto (-1)^{\ol{v}} v^\pi$ define odd (left or right) isomorphisms $V \simeq \bspi(V)$ and $V \simeq \Pi(V)$, respectively, which lift to odd isomorphisms $\bsi \simeq \bspi$ and $\bsi \simeq \Pi$. More generally, for each $F \in \bsp$, one gets $F = \bsi \circ F \simeq \bspi \circ F$ and $F = \bsi \circ F \simeq \Pi \circ F$.

\subsubsection{Symmetric powers}

The $n$-th symmetric power functor $\bss^n \in \bsp_n$ is defined on objects by
\[
\bss^n(V) = (V^{\otimes n})/\subgrp{z-(z.\sigma): z \in V^{\otimes n}, \sigma \in \fS_n}.
\]
Then $\bss(V) := \bigoplus_{n \in \N} \bss^n(V)$ is the \emph{symmetric superalgebra} on $V$. It is a commutative and cocommutative Hopf superalgebra, with the subspace $\bss^1(V) \cong V$ consisting of primitive elements for the coproduct. As an algebra, $\bss(V) \cong S(\Vzero) \gotimes \Lambda(\Vone)$.

\subsubsection{Exterior powers}

The $n$-th exterior power functor $\bsl^n \in \bsp_n$ is defined on objects by
\[
\bsl^n(V) = (V^{\otimes n})/\subgrp{z - (-1)^\sigma (z.\sigma): z \in V^{\otimes n}, \sigma \in \fS_n}.
\]
Then $\bsl(V) := \bigoplus_{n \in \N} \bsl^n(V)$ is the \emph{exterior superalgebra} on $V$. It is a graded-commutative and graded-cocommutative graded Hopf super\-algebra, with $\bsl^n(V)$ concentrated in $\Z$-degree $n$, and with the subspace $\bsl^1(V) \cong V$ consisting of primitive elements for the coproduct. As an algebra, $\bsl(V) \cong \Lambda(\Vzero) \gotimes S(\Vone)$, where we consider $\Lambda(\Vzero)$ and $S(\Vone)$ as $\Z$-graded via their natural decompositions $\Lambda(\Vzero) = \bigoplus_{n \in \N} \Lambda^n(\Vzero)$ and $S(\Vone) = \bigoplus_{n \in \N} S^n(\Vone)$.

\subsubsection{Divided powers} \label{subsubsec:dividedpowers}

The $n$-th divided power functor $\bsg^n \in \bsp_n$ is defined on objects by
\[
\bsg^n(V) = (V^{\otimes n})^{\fS_n} = \{ z \in V^{\otimes n} : z.\sigma = z \text{ for all $\sigma \in \fS_n$} \}.
\]
If $V$ is purely even, then $\bsg^n(V)$ is equal to $\Gamma^n(V)$, the usual $n$-th divided power of $V$. Let $J \subseteq \fS_{m+n}$ be a set of right coset representatives for the Young subgroup $\fS_m \times \fS_n$ of $\fS_{m+n}$. Then $\sum_{\sigma \in J} \sigma$ defines a natural transformation $\bsg^m \otimes \bsg^n \rightarrow \bsg^{m+n}$ (the shuffle product) that is independent of the choice of $J$. Summing over all $m,n \in \N$, one gets a product on $\bsg(V) := \bigoplus_{n \in \N} \bsg^n(V)$, and we call $\bsg(V)$ the \emph{divided power superalgebra} on $V$. It is a commutative and cocommutative Hopf superalgebra, whose coproduct $\Delta: \bsg(V) \to \bsg(V) \otimes \bsg(V)$ is the sum of the components
	\[
	\Delta_{m,n}: \bsg^{m+n}(V) = (V^{\otimes (m+n)})^{\fS_{m+n}} \hookrightarrow (V^{\otimes (m+n)})^{\fS_m \times \fS_n} = \bsg^m(V) \otimes \bsg^n(V).
	\]
Given a homogeneous vector $v \in V$, set $\gamma_a(v) = v^{\otimes a}$. Then $\gamma_a(v) \in \bsg^a(V)$ provided that $a \leq 1$ if $v$ is odd. Now given a homogeneous basis $v_1,\ldots,v_s$ for $V$, the set of monomials
	\[
	\set{ \gamma_{a_1}(v_1) \cdots \gamma_{a_s}(v_s): a_i \in \N \text{ with } a_i \leq 1 \text{ if $v_i$ is odd} }
	\]
is a basis for $\bsg(V)$. In particular, $\bsg(V)$ is generated as an algebra by the subspace $\Vone \subseteq \bsg^1(V)$ and the elements of the form $\gamma_{p^e}(v)$ for $v \in \Vzero$ and $e \geq 0$. As an algebra, $\bsg(V) \cong \Gamma(\Vzero) \otimes \Lambda(\Vone)$. 

\subsubsection{Alternating powers}

The $n$-th alternating power functor $\bsa^n \in \bsp_n$ is defined on objects by
\[
\bsa^n(V) = \{ z \in V^{\otimes n} : z.\sigma = (-1)^\sigma z \text{ for all $\sigma \in \fS_n$} \}.
\]
Let $J \subseteq \fS_{m+n}$ be a set of right coset representatives for the Young subgroup $\fS_m \times \fS_n$ of $\fS_{m+n}$. Then $\sum_{\sigma \in J} (-1)^\sigma \sigma$ defines a natural transformation $\bsa^m \otimes \bsa^n \rightarrow \bsa^{m+n}$ (the signed shuffle product) that is independent of the choice of $J$. Summing over all $m,n \in \N$, one gets a product on $\bsa(V) := \bigoplus_{n \in \N} \bsa^n(V)$, and we call $\bsa(V)$ the \emph{alternating power superalgebra} on $V$. It is a graded-commutative and graded-cocommutative graded Hopf super\-algebra, with $\bsa^n(V)$ concentrated in $\Z$-degree $n$, whose coproduct $\Delta: \bsa(V) \to \bsa(V) \gotimes \bsa(V)$ is the sum of the components
	\[
	\bsa^{m+n}(V) \cong \Hom_{\fS_{m+n}}(\sgn,V^{\otimes (m+n)}) \hookrightarrow \Hom_{\fS_m \times \fS_n}(\sgn,V^{\otimes (m+n)}) \cong \bsa^m(V) \otimes \bsa^n(V).
	\]
Here $\sgn$ denotes the one-dimensional sign representation of $\fS_{m+n}$. Given a homogeneous vector $v \in V$, set $\gamma_a(v) = v^{\otimes a}$. Then $\gamma_a(v) \in \bsa^a(V)$ provided that $a \leq 1$ if $v$ is even. Now given a homogeneous basis $v_1,\ldots,v_s$ for $V$, the set of monomials
	\[
	\set{ \gamma_{a_1}(v_1) \cdots \gamma_{a_s}(v_s): a_i \in \N \text{ with } a_i \leq 1 \text{ if $v_i$ is even} }
	\]
is a basis for $\bsa(V)$. As an algebra, $\bsa(V) \cong \Lambda(\Vzero) \gotimes \Gamma(\Vone)$.

\subsubsection{Duality} \label{subsubsec:duality}

Given $F \in \bsp_n$, the dual functor $F^\# \in \bsp_n$ is defined on objects by $F^\#(V) = F(V^*)^*$. On morphisms, the action of $F^\#$ is given by the composite map
	\[ %\label{eq:F-dual-on-morphisms}
	\bsg^n \Hom_k(V,W) \to \bsg^n \Hom_k(W^*,V^*) \xrightarrow{F} \Hom_k(F(W^*),F(V^*)) \to \Hom_k(F^\#(V),F^\#(W)),
	\]
where the first and last arrows are induced by sending a linear map to its transpose. Using the standard superspace map $\Phi: V \to V^{**}$, $\Phi(v)(f) = (-1)^{\ol{v} \cdot \ol{f}} f(v)$, which is an isomorphism for $V \in \bsv$, one can check that $\bsi \cong \bsi^\#$ and $F \cong F^{\#\#}$. Given $\eta \in \Hom_{\bsp}(F,G)$, define $\eta^\# \in \Hom_{\bsp}(G^\#,F^\#)$ by $\eta^\#(V) = \eta(V^*)^*$. Then $\eta \mapsto \eta^\#$ defines an isomorphism $\Hom_{\bsp}(F,G) \cong \Hom_{\bsp}(G^\#,F^\#)$, and $\eta^{\#\#}$ identifies via the isomorphisms $F \cong F^{\#\#}$ and $G \cong G^{\#\#}$ with $\eta$. If also $\sigma \in \Hom_{\bsp}(G,H)$, then $(\sigma \circ \eta)^\# = (-1)^{\ol{\sigma} \cdot \ol{\eta}} \eta^\# \circ \sigma^\#$. As discussed in \cite[\S2.6]{Drupieski:2016}, the identifications $\bss^1 = \bsi \cong \bsi^\# = (\bsg^1)^\#$ and $\bsl^1 = \bsi \cong \bsi^\# = (\bsa^1)^\#$ extend multiplicatively to isomorphisms $\bss \cong \bsg^\#$ and $\bsl \cong \bsa^\#$.

\subsubsection{Frobenius twists} \label{subsubsec:Frobenius}

Let $r$ be a positive integer, and let $\varphi: k \rightarrow k$ be the $p^r$-power map $\lambda \mapsto \lambda^{p^r}$. By abuse of notation, we also denote by $\varphi$ the map $\bss(V)^{(r)} = k \otimes_{\varphi} \bss(V) \to \bss(V)$ defined by $\lambda \otimes_{\varphi} z \mapsto \lambda \cdot z^{p^r}$. If $z = z_{\zero} + z_{\one}$ is the decomposition of $z$ into its even and odd parts, then $z_{\zero}$ and $z_{\one}$ commute in the ordinary (non-super) sense in $\bss(V)$, and hence $z^{p^r} = (z_{\zero})^{p^r}$ because $(z_{\one})^2 = 0$ in $\bss(V)$. It follows that $\varphi: \bss(V)^{(r)} \to \bss(V)$ is a $k$-superalgebra homomorphism whose image is contained in the subalgebra $\bss(\Vzero)$ of $\bss(V)$. By duality, there exists for each $V \in \bsv$ a superalgebra homomorphism $\varphi^\#: \bsg(V) \rightarrow \bsg(V)^{(r)}$. On generators,
\begin{equation} \label{eq:dualFrobenius}
\varphi^\#(z) = \begin{cases}
0 & \text{if $z \in \Vone \subseteq \bsg^1(V)$}, \\
\gamma_{p^{e-r}}(v)^{(r)} & \text{if $z = \gamma_{p^e}(v)$ for some $e \in \N$, $v \in \Vzero$, and $e \geq r$,} \\
0 & \text{if $z = \gamma_{p^e}(v)$ for some $e \in \N$, $v \in \Vzero$, and $e < r$.}
\end{cases}
\end{equation}
Then $\varphi^\#$ has image in the subalgebra $\bsg(\Vzero)^{(r)}$ of $\bsg(V)^{(r)}$. Now the $r$-th Frobenius twist functor $\bsir \in \bsp_{p^r}$ is defined on objects by $\bsir(V) = V^{(r)}$, and is defined on morphisms by
\begin{equation} \label{eq:IrVW}
\bsir_{V,W}: \bsg^{p^r} \Hom_k(V,W) \xrightarrow{\varphi^\#} [\bsg^1 \Hom_k(V,W)_{\ol{0}}]^{(r)} = \Hom_k(V^{(r)},W^{(r)})_{\ol{0}}.
\end{equation}
Since \eqref{eq:IrVW} has image in the space of even linear maps, it follows that there exist subfunctors $\bsirzero$ and $\bsirone$ of $\bsir$ such that $\bsirzero(V) = \Vzero^{(r)}$, $\bsirone(V) = \Vone^{(r)}$, and $\bsir = \bsirzero \oplus \bsirone$. Additionally,
	\[
	\bspi \circ \bsirzero \circ \bspi = \bsirone \quad \text{and} \quad \bspi \circ \bsirone \circ \bspi = \bsirzero.
	\]
	
If $F \in \calP_n$ is an ordinary strict polynomial functor, then $F \circ \bsir$ defines a strict polynomial superfunctor of degree $p^r n$, with the action of $F^{(r)}$ on morphisms defined by
	\begin{equation} \label{eq:Fr-on-morphisms}
	\bsg^{p^r n} \Hom_k(V,W) \xrightarrow{\varphi^\#} \bsg^n[\Hom_k(V^{(r)},W^{(r)})_{\ol{0}}] \xrightarrow{F} \Hom_k(F(V^{(r)}),F(W^{(r)})).
	\end{equation}
The second arrow in \eqref{eq:Fr-on-morphisms} is well-defined because for $U$ purely even, $\bsg^n(U)$ is equal to the usual divided power algebra $\Gamma^n(U)$ on $U$. Let $\pi: v \mapsto (-1)^{\ol{v}} v$ be the parity automorphism on $V$. Then the superspace structure on $F(V^{(r)})$ is defined by declaring the even (resp.\ odd) subspace of $F(V^{(r)})$ to be the $+1$ (resp.\ $-1$) eigenspace for the action of $F^{(r)}(\pi^{\otimes p^r n}): F^{(r)}(V) \to F^{(r)}(V)$. Now given either $F \in \bsp$ or $F \in \calP$, set $F^{(r)} = F \circ \bsir$, $F_0^{(r)} = F \circ \bsirzero$, and $F_1^{(r)} = F \circ \bsirone$. Then one has the identifications
	\[
	\bss^{(r)} = S_0^{(r)} \otimes \Lambda_1^{(r)}, \quad \bsl^{(r)} = \Lambda_0^{(r)} \gotimes S_1^{(r)}, \quad \bsa^{(r)} = \Lambda_0^{(r)} \gotimes \Gamma_1^{(r)}, \quad	\bsg^{(r)} = \Gamma_0^{(r)} \otimes \Lambda_1^{(r)}.
	\]
For $r \geq 1$, one gets
	\begin{equation} \label{eq:conjugateiso}
	\left. \begin{aligned}
	S_1^{n(r)} &\cong S_0^{n(r)} \circ \bspi \\
	\Lambda_1^{n(r)} &\cong \Lambda_0^{n(r)} \circ \bspi \\
	\Gamma_1^{n(r)} &\cong \Gamma_0^{n(r)} \circ \bspi
	\end{aligned} \right\rbrace \text{ if $n$ is even, and} \qquad
	\left. \begin{aligned}
	S_1^{n(r)} &\cong \bspi \circ S_0^{n(r)} \circ \bspi \\
	\Lambda_1^{n(r)} &\cong \bspi \circ \Lambda_0^{n(r)} \circ \bspi \\
	\Gamma_1^{n(r)} &\cong \bspi \circ \Gamma_0^{n(r)} \circ \bspi
	\end{aligned} \right\rbrace \text{ if $n$ is odd.} 
	\end{equation}
These identifications also hold with $\bspi$ replaced by $\Pi$. The functors $\bspi$ and $\Pi$ act differently on morphisms, but because $\varphi^\#$ annihilates the odd superdegree generators in $\bsg [ \Hom_k(V,W) ]$, this difference is immaterial in \eqref{eq:conjugateiso}.

\subsubsection{Parameterized functors}

Given $F \in \bsp_n$ and $V \in \bsv$, the \emph{parameterized functors} $F^V \in \bsp_n$ and $F_V \in \bsp_n$ are defined by $F^V = F(\Hom_k(V,-))$ and $F_V = F(V \otimes -)$. These functors satisfy the relations $F^V \cong F_{V^*}$, $(F^V)^\# \cong (F^\#)_V$, $(F^V)^U \cong F^{V \otimes U}$, and $(F_V)_U \cong F_{V \otimes U}$.

\subsubsection{Strict polynomial superfunctors on graded superspaces} \label{subsubsec:gradings}

Let $V = k^{m|n}$ for some $m,n \in \N$. By \cite[Lemma 5.1.1]{Drupieski:2019b}, evaluation on $V$ defines an exact functor from $\bsp_d$ to the category of rational $GL_{m|n}$-super\-modules. Then for each $F \in \bsp_d$, $F(V)$ decomposes as a direct sum of weight spaces,
	\[
	F(V) = \bigoplus_{\substack{d_1,\ldots,d_{m+n} \in \N \\ d_1+\cdots+d_{m+n} = d}} F(V)^{d_1,\ldots,d_{m+n}}
	\]
for the action of the diagonal subgroup scheme $T \cong (\G_m)^{\times (m+n)}$ of $GL_{m|n}$. This decomposition can be realized as follows: Given a field extension $K/k$ and a $k$-vector space $W$, set $W_K = W \otimes_k K$. By scalar extension, $F$ induces an even linear map
	\[
	F_K: [\bsg^d \Hom_k(V,V)]_K \to \Hom_k(F(V),F(V))_K \cong \Hom_K(F(V)_K,F(V)_K).
	\]
If $g = \diag(\lambda_1,\ldots,\lambda_{m+n}) \in T(K)$ is a diagonal matrix with coefficients in $K$, then $g$ defines an element of $\Hom_k(V,V)_K$, and hence $g^{\otimes d} \in \bsg^d [\Hom_k(V,V)_K] \cong [\bsg^n \Hom_k(V,V)]_K$. Then
	\begin{multline*}
	F(V)^{d_1,\ldots,d_{m+n}} = \Big\{ x \in F(V) : \text{ for all $K/ k$ and all } g = \diag(\lambda_1,\ldots,\lambda_{m+n}) \in T(K), \\
	F_K(g^{\otimes d})(x) = \lambda_1^{d_1} \cdots \lambda_{m+n}^{d_{m+n}} \cdot x \in F(V)_K \Big\}.
	\end{multline*}

Suppose that $V$ is a graded superspace, and that $v_1,\ldots,v_{m+n}$ is a homogeneous basis for $V$. Define $\G_m \to T \cong (\G_m)^{\times (m+n)}$ by $x \mapsto (x^{\deg(v_i)})_{1 \leq i \leq m+n}$. Then by restriction, $F(V)$ becomes a rational $\G_m$-module, and hence is endowed with a weight space decomposition $F(V) = \bigoplus_{i \in \Z} F(V)^i$. In this manner, we consider $F(V)$ as a graded superspace.

Now consider the parameterized functor $F(V \otimes -)$. For each $U \in \bsv$, the tensor product $V \otimes U$ inherits a $\Z$-grading via $\deg(v \otimes u) = \deg(v)$. Then $F(V \otimes U)$ is a graded superspace. Moreover, the decomposition $F(V \otimes U) = \bigoplus_{i \in \Z} F(V \otimes U)^i$ is natural with respect to $U$, and hence induces a corresponding decomposition of the functor $F_V = F(V \otimes -)$. Similarly, $\Hom_k(V,U)$ inherits a $\Z$-grading defined by $\Hom_k(V,U)^i = \Hom_k(V^{-i},U)$, which in turn induces a decomposition of the functor $F^V = F(\Hom_k(V,-))$.

\subsubsection{Projectives and injectives}

We say that $P \in \bsp$ is \emph{projective} if the functor $\Hom_{\bsp}(P,-): \bsp_\ev \rightarrow \fsvec_\ev$ is exact, and that $Q \in \bsp$ is \emph{injective} if $\Hom_{\bsp}(-,Q): \bsp_\ev \rightarrow \fsvec_\ev$ is exact. Here $\fsvec_\ev$ is the underlying (abelian) even subcategory of $\fsvec$, having the same objects as $\fsvec$ but only the even linear maps as morphisms. Through appropriate composition with parity change functors, one can check that a functor is projective (resp.\ injective) if and only if it is projective (resp.\ injective) in the abelian subcategory $\bsp_{\ev}$.

Given $V \in \bsv$ and $n \in \N$, set $\bsg^{n,V} = \bsg^n \Hom_k(V,-)$ and $\bss_V^n = \bss^n(V \otimes -)$. Then by Yoneda's Lemma and duality, there exist for each $F \in \bsp_n$ natural isomorphisms
\begin{equation} \label{eq:Yonedalemma}
\Hom_{\bsp_n}(\bsg^{n,V},F) \cong F(V) \quad  \text{and} \quad \Hom_{\bsp_n}(F,\bss_V^n) \cong F^\#(V).
\end{equation}
Since exactness in $\bsp_{\ev}$ is defined `pointwise,' this implies that $\bsg^{n,V}$ is projective and $\bss_V^n$ is injective in $\bsp_n$. In fact, if $V = k^{n|n}$, then $\bsg^{n,V} \oplus (\Pi \circ \bsg^{n,V})$ is a projective generator and $\bss_V^n \oplus (\Pi \circ \bss_V^n)$ is an injective generator for $\bsp_n$ \cite[Theorem 3.1.1]{Drupieski:2016}, so $\bsp_n$ has enough projectives and enough injectives.

%In the special case that $F = \bsg^{n,W}$ for some $W \in \bsv$, the first isomorphism in \eqref{eq:Yonedalemma} becomes
%	\[
%	\Hom_{\bsp_n}(\bsg^{n,V},\bsg^{n,W}) \cong \bsg^{n,W}(V) = \bsg^n \Hom_k(W,V).
%	\]
%Given $\phi \in \bsg^n \Hom_k(W,V)$, the corresponding homomorphism $\eta_\phi: \bsg^{n,V} \to \bsg^{n,W}$ is defined as follows: Given $U \in \bsv$ and $\alpha \in \bsg^{n,V}(U) = \bsg^n \Hom_k(V,U)$, one has $\eta_\phi(\alpha) = (-1)^{\ol{\alpha} \cdot \ol{\phi}}  \cdot \alpha \circ \phi$, where we consider $\alpha$ and $\phi$ as morphisms in the category $\bsg^n \bsv$.

In the special case that $F = \bss_W^n$, the second isomorphism in \eqref{eq:Yonedalemma} becomes
	\begin{equation} \label{eq:SYoneda}
	\Hom_{\bsp}(\bss_W^n,\bss_V^n) \cong \bsg^{n,W}(V) = \bsg^n \Hom_k(W,V).
	\end{equation}
Observe that $\Hom_k(W,V)$ maps into $\Hom_k(W \otimes U, V \otimes U)$ via $\psi \mapsto \psi \otimes 1_U$, where $1_U$ denotes the identity map on $U$. This induces an even linear map $\bsg^n \Hom_k(W,V) \to \bsg^n \Hom_k(W \otimes U, V \otimes U)$, which we also denote by $\phi \mapsto \phi \otimes 1_U$. Then the homomorphism $\eta_\phi: \bss_W^n \to \bss_V^n$ corresponding to $\phi \in \bsg^n \Hom_k(W,V)$ is defined by $\eta_\phi(U) = \bss^n(\phi \otimes 1_U): \bss^n(W \otimes U) \to \bss^n(V \otimes U)$, i.e., $\eta_\phi(U)$ is the linear map that arises from $\phi \otimes 1_U$ via the functoriality of $\bss^n$.

\begin{lemma} \label{lemma:AprojectiveLinjective}
The spaces $\Hom_{\bsp}(\bsg^{n,k^{0|1}},\bsa^n)$ and $\Hom_{\bsp}(\bsl^n,\bss_{k^{0|1}}^n)$ are each one-dimen\-sional, spanned by isomorphisms of parity $\ol{n}$. Consequently, for each $V \in \bsv$, $\bsa^{n,V}$ is isomorphic to the projective functor $\bsg^{n,k^{0|1} \otimes V}$, and $\bsl_V^n$ is isomorphic to the injective functor $\bss_{k^{0|1} \otimes V}^n$.
\end{lemma}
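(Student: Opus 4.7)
The plan is to apply Yoneda's Lemma, identities \eqref{eq:Yonedalemma}, to reduce both statements to the computation of $\bsa^n(k^{0|1})$. The first identification is immediate:
\[
\Hom_{\bsp}(\bsg^{n,k^{0|1}},\bsa^n) \cong \bsa^n(k^{0|1}).
\]
For the second, I use the isomorphism $\bsl \cong \bsa^\#$ from \S\ref{subsubsec:duality} (so $(\bsl^n)^\# \cong \bsa^n$), giving
\[
\Hom_{\bsp}(\bsl^n,\bss_{k^{0|1}}^n) \cong (\bsl^n)^\#(k^{0|1}) \cong \bsa^n(k^{0|1}).
\]
Since $\bsa(V) \cong \Lambda(\Vzero) \otimes \Gamma(\Vone)$ and $k^{0|1}$ is purely odd, $\bsa(k^{0|1}) \cong \Gamma(k)$, whose $n$-th graded piece is one-dimensional and spanned by $\gamma_n(v) = v^{\otimes n}$ for a basis vector $v$ of $(k^{0|1})_{\one}$. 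This element lies in superdegree $n \cdot \ol{v} = \ol{n}$, so both Hom-spaces are one-dimensional and concentrated in superdegree $\ol{n}$.

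Next I would exhibit explicitly a natural transformation $\bsg^{n,k^{0|1}} \to \bsa^n$ that is an isomorphism. Using $\Hom_k(k^{0|1},V) \cong k^{0|1}\otimes V$, iterated application of the supertwist provides an $\fS_n$-equivariant identification $(k^{0|1}\otimes V)^{\otimes n} \cong (k^{0|1})^{\otimes n} \otimes V^{\otimes n}$. Under this identification, an adjacent transposition $(i,i+1)$ acts on $(k^{0|1})^{\otimes 2}$ as the supertwist of two odd vectors, contributing the sign $(-1)^{\one\cdot\one} = -1$, so the $\fS_n$-action on the right-hand side is the tensor product of $\sgn$ on $(k^{0|1})^{\otimes n}$ with the standard $\fS_n$-action on $V^{\otimes n}$. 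Taking invariants and contracting the one-dimensional factor $(k^{0|1})^{\otimes n}$ then yields a natural isomorphism $\bsg^{n,k^{0|1}}(V) = \bsg^n(k^{0|1}\otimes V) \xrightarrow{\sim} \bsa^n(V)$ of parity $\ol{n}$. By the previous paragraph, the Hom-space is one-dimensional, so this isomorphism spans it.

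The second isomorphism assertion follows by dualization: combining $\bss \cong \bsg^\#$, $\bsl \cong \bsa^\#$, $(k^{0|1})^* \cong k^{0|1}$, and $(F^V)^\# \cong (F^\#)_V$ from \S\ref{subsec:examples}, the dual of $\bsg^{n,k^{0|1}} \cong \bsa^n$ becomes $\bsl^n \cong \bss^n_{k^{0|1}}$, again spanning the corresponding one-dimensional Hom-space by an isomorphism of parity $\ol{n}$. Finally, the parameterized consequences follow pointwise: evaluating $\bsa^n \cong \bsg^{n,k^{0|1}}$ at $\Hom_k(V,U)$ and using $\Hom_k(k^{0|1}, \Hom_k(V,U)) \cong \Hom_k(k^{0|1}\otimes V, U)$ gives $\bsa^{n,V}(U) \cong \bsg^{n,k^{0|1}\otimes V}(U)$ naturally in $U$, and the parallel tensor-parameterized argument yields $\bsl^n_V \cong \bss^n_{k^{0|1}\otimes V}$. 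The main (though essentially bookkeeping) obstacle is the sign tracking in the second paragraph, where one must verify that the iterated supertwist precisely converts ordinary $\fS_n$-invariance on $(k^{0|1}\otimes V)^{\otimes n}$ into the $\sgn$-semi-invariance on $V^{\otimes n}$ defining $\bsa^n(V)$.
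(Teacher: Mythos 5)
Your proposal is correct but takes a different route from the paper to exhibit the isomorphism. You both begin with Yoneda's Lemma to identify $\Hom_{\bsp}(\bsg^{n,k^{0|1}},\bsa^n) \cong \bsa^n(k^{0|1})$, which is one-dimensional and spanned by $v^{\otimes n}$ in superdegree $\ol{n}$. Where you diverge is in showing that the corresponding natural transformation is an isomorphism. The paper takes the Yoneda-image map $\eta_x : \bsg^{n,k^{0|1}} \to \bsa^n$ with $x = v^{\otimes n}$, writes down the explicit bases of $\bsg^n(\Hom_k(k^{0|1},U))$ and $\bsa^n(U)$ in terms of divided-power monomials $\gamma_{a_1}(u_1 \otimes v^*)\cdots\gamma_{a_s}(u_s\otimes v^*)$ and $\gamma_{a_1}(u_1)\cdots\gamma_{a_s}(u_s)$, and checks directly that $\eta_x(U)$ matches these bases up to sign. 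You instead use the braided-monoidal identification $(k^{0|1}\otimes V)^{\otimes n} \cong (k^{0|1})^{\otimes n}\otimes V^{\otimes n}$ and the observation that the one-dimensional odd factor $(k^{0|1})^{\otimes n}$ twists the $\fS_n$-action by $\sgn$, so that taking $\fS_n$-invariants and contracting that factor converts $\bsg^n(k^{0|1}\otimes V)$ into $\bsa^n(V)$. The paper's approach is more concrete (it produces and verifies a specific basis-to-basis map), while yours is more conceptual and avoids monomial bookkeeping, at the cost of the supertwist sign-tracking you flag; since you first establish the one-dimensionality via Yoneda, the constructed map necessarily spans the Hom-space, so that part is handled cleanly. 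Both are valid.
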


\begin{proof}
We'll prove the claim for $\Hom_{\bsp}(\bsg^{n,k^{0|1}},\bsa^n)$; the other claim follows by duality.

By Yoneda's Lemma, $\Hom_{\bsp}(\bsg^{n,k^{0|1}},\bsa^n) \cong \bsa^n(k^{0|1})$. Let $v$ be a fixed basis vector for $k^{0|1}$. Then $\bsa^n(k^{0|1})$ is spanned by $x :=  v^{\otimes n}$. Under the Yoneda isomorphism, this vector corresponds to the homomorphism $\eta_x: \bsg^{n,k^{0|1}} \to \bsa^n$ that is defined as follows: Given $U \in \bsv$ and $\phi \in \bsg^{n,k^{0|1}}(U) = \Hom_{\bsg^n \bsv}(k^{0|1},U)$, one has $\eta_x(U)(\phi) = (-1)^{\ol{x} \cdot \ol{\phi}} \cdot \bsa^n(\phi)(x)$. This shows that $\ol{\eta_x} = n \cdot \ol{v} = \ol{n}$. Now to prove the claim we just need to show for each $U \in \bsv$ that $\eta_x(U)$ is an isomorphism. Let $u_1,\ldots,u_s$ be a homogeneous basis for $U$, and let $v^* \in (k^{0|1})^*$ be the functional that evaluates to $1$ on $v$. Then $u_1 \otimes v^*,\ldots,u_s \otimes v^*$ is a homogeneous basis for $\Hom_k(k^{0|1},U) \cong U \otimes (k^{0|1})^*$, so the set
	\[
	\set{\gamma_{a_1}(u_1 \otimes v^*) \cdots \gamma_{a_s}(u_s \otimes v^*): a_i \in \N \text{ with } a_i \leq 1 \text{ if $u_i$ is even}}.
	\]
is a basis for $\bsg^{n,k^{0|1}}(U)$, and the set
	\[
	\set{\gamma_{a_1}(u_1) \cdots \gamma_{a_s}(u_s): a_i \in \N \text{ with } a_i \leq 1 \text{ if $u_i$ is even}}.
	\]
is a basis for $\bsa^n(U)$. Now one can check that, modulo a sign that depends on the integers $a_1,\ldots,a_s$ and on the parities of the vectors $u_1,\ldots,u_s$, the basis vector $\gamma_{a_1}(u_1 \otimes v^*) \cdots \gamma_{a_s}(u_s \otimes v^*)$ is mapped by $\eta_x(U)$ to the basis vector $\gamma_{a_1}(u_1) \cdots \gamma_{a_s}(u_s)$. Thus, $\eta_x(U)$ is a superspace isomorphism.
\end{proof}

%\begin{remark}
%For $F \in \bsp$, one has $F \circ \bspi = F_{k^{0|1}}$ and $F \circ \Pi = F^{k^{0|1}}$. Then it follow for $n$ odd that $\bspi \circ \bss^n \circ \bspi \cong \bsl^n$ and $\Pi \circ \bsg^n \circ \Pi \cong \bsa^n$. Since $\bspi \cong \Pi$, the functors $\bspi$ and $\Pi$ can be interchanged in either of these isomorphisms.
%\end{remark}

\subsubsection{Cohomology of strict polynomial functors}

Let $F,G \in \bsp$. Since $\bsp_{\ev}$ contains both enough projectives and enough injectives, the cohomology groups $\Ext_{\bsp}^\bullet(F,G)$ can be defined in the usual way as the right derived functors of either $\Hom_{\bsp}(F,-): \bsp_{ev} \to \fsvec_{\ev}$ or $\Hom_{\bsp}(-,G): (\bsp_{\ev})^{\op} \to \fsvec_{\ev}$. Writing $F = \bigoplus_{n \in \N} F_n$ and $G = \bigoplus_{n \in \N} G_n$ for the decompositions of $F$ and $G$ in $\bsp$, one has $\Ext_{\bsp}^\bullet(F,G) = \prod_{n \in \N} \Ext_{\bsp_n}^\bullet(F_n,G_n)$. In particular, $\Ext_{\bsp}^\bullet(F,G) = 0$ if $F$ and $G$ are homogeneous of distinct polynomial degrees.

Given $F \in \bsp$, set $F^{\bspi} = \bspi \circ F \circ \bspi$. We refer to the operation $F \mapsto F^{\bspi}$ as \emph{conjugation by $\bspi$}. It is an exact even linear operation on $\bsp$ that sends projectives to projectives and injectives to injectives. It thus extends to an even isomorphism on cohomology groups $\Ext_{\bsp}^\bullet(F,G) \cong \Ext_{\bsp}^\bullet(F^{\bspi},G^{\bspi})$, denoted $z \mapsto z^{\bspi}$, which is compatible with Yoneda compositions of extensions in the sense that $(z \circ w)^{\bspi} = z^{\bspi} \circ w^{\bspi}$. Since $\bspi \circ \bspi = \bsi$, then $(z^{\bspi})^{\bspi} = z$.

\section{Superized Troesch complexes} \label{sec:superTroesch}

\subsection{\texorpdfstring{$p$-complexes}{p-complexes}} \label{subsec:p-complexes}

A $p$-complex (of vector superspaces, of cohomological type) is a graded super\-space $C = \bigoplus_{i \in \N} C^i$, concentrated in non-negative integer degrees, together with an even linear map $d: C \to C$ that raises $\Z$-degrees by a fixed positive integer $\alpha$ and such that $d^p = 0$.\footnote{The definition of a $p$-complex given elsewhere in the literature corresponds to the case $\alpha = 1$ of the definition stated here. We give this more general definition to avoid some re-indexing of complexes later. A $p$-complex as we have defined it decomposes as a direct sum of `ordinary' $p$-complexes of the form $C_j = \bigoplus_{i \in \N} C_j^{j+i\alpha}$ for $0 \leq j < \alpha$.} An even linear map $d$ with these properties is called a \emph{$p$-differential}. Equivalently, considering $k[d]/(d^p)$ as a graded superalgebra with $\ol{d} = \zero$ and $\deg(d)= \alpha$, a $p$-complex is a graded supermodule for $k[d]/(d^p)$ whose $\Z$-grading is concentrated in non-negative degrees.

Given a $p$-complex $C$ and an integer $1 \leq s < p$, the cohomology group $\Hs^i(C)$ is defined by
	\[
	\Hs^i(C) = \frac{\ker\left( d^s: C^i \to C^{i+s\alpha}\right)}{\im\left( d^{p-s}: C^{i-(p-s)\alpha} \to C^i \right)}. 
	\]
Given integers $1 \leq s \leq t < p$, one has $\ker(d^s) \subseteq \ker(d^t)$ and $\im(d^{p-s}) \subseteq \im(d^{p-t})$, and thus one gets a canonical morphism $i: \Hs^\bullet(C) \to \opH_{[t]}^\bullet(C)$. Following Tikaradze \cite{Tikaradze:2002}, we say that a $p$-complex $C$ is \emph{normal} if $i: \Hs^\bullet(C) \to \opH_{[t]}^\bullet(C)$ is an isomorphism for all $1 \leq s \leq t < p$. If $C$ is normal, we may simply write $\Hstarbul(C)$ rather than $\Hs^\bullet(C)$. We then say that a $p$-complex $C$ is \emph{$p$-acyclic} if $\Hstarbul(C) = 0$. We say that $C$ is a \emph{$p$-coresolution} of a superspace $V$ if $C$ is normal, $\Hstar^i(C) = 0$ for $i > 0$, and $\Hstar^0(C) \cong V$. In particular, if $C$ is $p$-acyclic, it is a $p$-coresolution of $0$.

\begin{remark} \label{remark:p-acyclic-equivalent}
By a result of Kapranov \cite{Kapranov:1996}, if $\Hs^\bullet(C) = 0$ for some $1 \leq s < p$, then $\Hs^\bullet(C) = 0$ for all $1 \leq s < p$, and hence $C$ is $p$-acyclic. This can be seen by considering $C$ as a module over $k[d]/(d^p)$, for then $\Hs^\bullet(C) = 0$ if and only if $\ker(d^s) = \im(d^{p-s})$. The latter occurs if and only if $C$ is free over $k[d]/(d^p)$, in which case $\ker(d^s) = \im(d^{p-s})$ for all $1 \leq s < p$.
\end{remark}

Given integers $1 \leq s < p$ and $0 \leq t < (p-s)\alpha$, the contracted complex $C_{[s,t]} = C_{[s,t]}^\bullet$ is the ordinary chain complex
	\[
	C_{[s,t]} : C^t \xrightarrow{d^s} C^{t+s\alpha} \xrightarrow{d^{p-s}} C^{t+p\alpha} \xrightarrow{d^s} C^{t+(p+s)\alpha} \xrightarrow{d^{p-s}} C^{t+2p \alpha} \xrightarrow{d^s}\cdots,
	\]
with $C_{[s,t]}^{2i} = C^{t+pi \alpha}$ and $C_{[s,t]}^{2i+1} = C^{t+(pi+s)\alpha}$. Then
	\begin{equation} \label{eq:Cst-to-C}
	\opH^\ell(C_{[s,t]}) =
	\begin{cases}
	\Hs^{t+pi\alpha}(C) & \text{if $\ell = 2i$ is even,} \\
	\opH_{[p-s]}^{t+(pi+s)\alpha}(C) & \text{if $\ell = 2i+1$ is odd.}
	\end{cases}
	\end{equation}
Conversely, let $\ell \in \N$, and write $\ell = \ell_0 + \ell_1 p\alpha$ for some $\ell_0,\ell_1 \in \N$ with $0 \leq \ell_0 < p\alpha$. Then
	\begin{equation} \label{eq:C-to-Cst}
	\Hs^\ell(C) =
	\begin{cases}
	\opH^{2\ell_1}(C_{[s,\ell_0]}) & \text{if $0 \leq \ell_0 < (p-s)\alpha$,} \\
	\opH^{2\ell_1 + 1}(C_{[p-s,\ell_0-(p-s)\alpha]}) & \text{if $(p-s)\alpha \leq \ell_0 < p\alpha$.}
	\end{cases}
	\end{equation}
If $C$ is a $p$-coresolution of $V$, then for all $1 \leq s < p$ and all $1 \leq t < (p-s)\alpha$, the complex $C_{[s,0]}$ is a coresolution of $V$ in the ordinary sense, and $C_{[s,t]}$ is acyclic.

\begin{lemma} \label{lemma:normal-p-complexes}
Let $C$ be a $p$-complex, considered as a graded supermodule for $A = k[d]/(d^p)$. Then $C$ is normal if and only if it is isomorphic as a graded $A$-supermodule to a direct sum of cyclic $A$-modules of the forms $A\subgrp{i}$, $k\subgrp{i}$, $\Pi(A)\subgrp{i}$, and $\Pi(k)\subgrp{i}$ for $i \in \N$. Consequently, if $C$ is normal, then $\Hstar^j(C)$ is isomorphic to the direct sum of the copies of $k\subgrp{j}$ and $\Pi(k)\subgrp{j}$ that occur in the direct sum decomposition of $C$.
\end{lemma}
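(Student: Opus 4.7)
The reverse implication is the easier one. Since the cohomology functors $\Hs^\bullet$ and the canonical comparison maps $\Hs^\bullet \to \opH_{[t]}^\bullet$ commute with direct sums, it suffices to verify normality on each of the four cyclic summand types. For $A\subgrp{i}$, the module is free over $A$, so $\ker(d^s) = \im(d^{p-s})$ is the span of $d^{p-s}, d^{p-s+1}, \ldots, d^{p-1}$, whence $\Hs^\bullet(A\subgrp{i}) = 0$ for every $1 \le s < p$. For $k\subgrp{i}$ the differential is identically zero, so $\Hs^\bullet(k\subgrp{i}) = k\subgrp{i}$ for every $s$ and the comparison maps are the identity. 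The parity-reversed summands behave identically. Summing over summands, $C$ is normal, and only the trivial-differential summands contribute to $\Hstarbul(C)$, which already yields the final assertion of the lemma.

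For the converse, the plan is to invoke the structure theory for graded supermodules over the graded local Artinian principal ideal superalgebra $A = k[d]/(d^p)$. Up to isomorphism, its indecomposable graded supermodules are precisely the cyclic modules $k[d]/(d^j)\subgrp{i}$ and their parity reversals $\Pi(k[d]/(d^j))\subgrp{i}$ for $1 \le j \le p$ and $i \in \N$. I would produce a decomposition of $C$ by choosing a $\Z\times\Z_2$-homogeneous basis of $C/dC$, lifting each basis element to a homogeneous generator $c \in C$, observing that $c$ generates a cyclic submodule of one of the asserted indecomposable forms (with $j$ determined as the least integer such that $d^j c = 0$), and piecing these together via Zorn's Lemma into a full direct sum decomposition of $C$ as a graded $A$-supermodule.

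The main step is then to rule out summands with $1 < j < p$. Fix $M = k[d]/(d^j)\subgrp{i}$ with $2 \le j \le p-1$. A direct computation gives $\ker(d) = \subgrp{d^{j-1}}$ and $\im(d^{p-1}) = 0$ in $M$ (the latter because $p-1 \ge j$), so $\Hone^\bullet(M)$ is one-dimensional, spanned by the class of $d^{j-1}$. Similarly $\ker(d^{p-1}) = M$ while $\im(d) = \subgrp{d, d^2, \ldots, d^{j-1}}$, so $\opH_{[p-1]}^\bullet(M)$ is one-dimensional, spanned by the class of $1$. The canonical map $\Hone^\bullet(M) \to \opH_{[p-1]}^\bullet(M)$ carries $[d^{j-1}]$ to $[d \cdot d^{j-2}] = 0$, using $j \ge 2$, so it is the zero map between two one-dimensional spaces and is not an isomorphism. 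The same conclusion holds for the parity-reversed version of $M$. Because the cohomology functors and their comparison maps respect the direct sum decomposition of $C$, the presence of any such summand spoils normality of $C$, and thus every indecomposable summand of $C$ must have $j \in \{1, p\}$, as required.

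The main obstacle in the plan is the structure-theoretic decomposition in the second paragraph: it is routine when $C$ is finitely generated (Krull--Schmidt for the graded Artinian algebra $A$), but requires some care via Zorn's Lemma in the possibly infinite-dimensional setting encountered in the applications. Everything else reduces to bookkeeping with explicit bases of $k[d]/(d^j)$.
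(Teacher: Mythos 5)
Your overall strategy coincides with the paper's: establish that every graded $A$-supermodule decomposes as a direct sum of cyclic modules $A/(d^j)\subgrp{i}$ and $\Pi(A/(d^j))\subgrp{i}$, then show such a summand is normal exactly when $j\in\{1,p\}$. Your verification that $A\subgrp{i}$ and $k\subgrp{i}$ (and their parity reversals) are normal, and your computation that $k[d]/(d^j)\subgrp{i}$ for $2\le j\le p-1$ fails normality via the comparison map $\Hone^\bullet\to\opH_{[p-1]}^\bullet$, are both correct and match what the paper dismisses with "one immediately checks."

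The gap is in the structure-theoretic decomposition in your second paragraph. Lifting a homogeneous basis of $C/dC$ does \emph{not}, for an arbitrary choice of lifts, yield cyclic submodules whose sum is direct. Take $p\ge 3$ and
\[
C = k[d]/(d^2)\subgrp{0}\ \oplus\ k[d]/(d^2)\subgrp{0}\ \oplus\ k[d]/(d)\subgrp{0},
\]
with standard generators $e_1,e_2,e_3$, and choose the lifts $c_1=e_1+e_3$, $c_2=e_2+e_3$, $c_3=e_1+e_2+e_3$. Their images form a basis of $C/dC$, and each $Ac_i\cong k[d]/(d^2)$; but $dc_3=dc_1+dc_2$, so $Ac_3\cap(Ac_1+Ac_2)\neq 0$, and indeed the total length $2+2+2=6$ exceeds $\dim_k C=5$. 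Thus the lifts must be chosen carefully, and the existence of a compatible choice is precisely the nontrivial content of the structure theorem for (possibly infinitely generated) graded modules over the local Artinian principal ideal ring $k[d]/(d^p)$. The paper does not reprove this; it cites \cite[Theorem 1]{Webb:1985} for exactly this step. Your mention of Zorn's Lemma gestures at the right machinery, but you do not address how to choose the generators so that the cyclic submodules they generate are actually independent, and that is where all the real work lies.
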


\begin{proof}
By \cite[Theorem 1]{Webb:1985}, $C$ is isomorphic as a graded $A$-supermodule to a direct sum of cyclic submodules, i.e., modules of the forms $A/(d^j)\subgrp{i}$ and $\Pi(A/(d^j))\subgrp{i}$ for $1 \leq j \leq p$ and $i \in \N$. Note that $A/(d^1) \cong k$ is the trivial $A$-module and $A/(d^p) = A$. The $A$-module decomposition of $C$ defines a direct sum decomposition of $C$ as a $p$-complex, and $C$ is normal if and only if each direct summand is a normal. One immediately checks that $A/(d^j)\subgrp{i}$ and $\Pi(A/(d^j))\subgrp{i}$ are normal if and only if $j = 1$ or $j = p$, so $C$ is normal if and only if it contains only summands of these types.
\end{proof}

\subsection{Tensor products of \texorpdfstring{$p$-complexes}{p-complexes}} \label{subsec:tensor-product-p-complexes}

Let $(C_1,d_1)$ and $(C_2,d_2)$ be two $p$-complexes. Then $C = C_1 \otimes C_2$ is a $p$-complex with $p$-differential $d$ defined by
	\[
	d(x \otimes y) = d_1(x) \otimes y + x \otimes d_2(y).
	\]
If $x \in \ker(d_1)$ and $y \in \ker(d_2)$, then $x \otimes y \in \ker(d)$. If $x \in \im(d_1^{p-1})$ with say $x = d_1^{p-1}(x')$, then
	\[ \textstyle
	d^{p-1}(x' \otimes y) = \sum_{\ell = 0}^{p-1} \binom{p-1}{\ell} d_1^{p-1-\ell}(x') \otimes d_2^\ell(y) = \binom{p-1}{\ell} d_1^{p-1}(x') \otimes y = 0,
	\]
so $x \otimes y \in \im(d^{p-1})$. Similarly, if $y \in \im(d_2^{p-1})$, then $x \otimes y \in \im(d^{p-1})$. Now given cohomology classes $\alpha \in \Hone^i(C_1)$ and $\beta \in \Hone^j(C_2)$, represented by cocycles $x \in \ker(d_1)$ and $y \in \ker(d_2)$, respectively, define $\zeta(\alpha \otimes \beta) \in \Hone^{i+j}(C_1 \otimes C_2)$ to be the cohomology class of $x \otimes y$. Then $\zeta$ extends to a well-defined even linear map
	\begin{equation} \label{eq:Kunnethmap}
	\zeta: \Hone^\bullet(C_1) \otimes \Hone^\bullet(C_2) \to \Hone^\bullet(C_1 \otimes C_2)
	\end{equation}

\begin{theorem}[K\"{u}nneth Theorem for normal $p$-complexes] \label{thm:Kunneth}
Let $(C_1,d_1)$ and $(C_2,d_2)$ be two normal $p$-complexes. Then $C_1 \otimes C_2$ is a normal $p$-complex, whose $n$-th cohomology group $\Hstar^n(C_1 \otimes C_2)$ is isomorphic to the direct sum of the copies of $k\subgrp{i} \otimes k\subgrp{j}$, $\Pi(k)\subgrp{i} \otimes k\subgrp{j}$, $k\subgrp{i} \otimes \Pi(k)\subgrp{j}$, and $\Pi(k)\subgrp{i} \otimes \Pi(k)\subgrp{j}$, for $i+j=n$, that occur in the graded $k[d]/(d^p)$-supermodule decomposition of $C_1 \otimes C_2$. In particular, the map \eqref{eq:Kunnethmap} defines an isomorphism
	\[
	\zeta: \Hstarbul(C_1) \otimes \Hstarbul(C_2) \cong \Hstarbul(C_1 \otimes C_2),
	\]
which we call the K\"{u}nneth isomorphism for normal $p$-complexes.
\end{theorem}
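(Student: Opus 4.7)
The plan is to reduce the theorem to a direct computation using the structural description of normal $p$-complexes provided by Lemma~\ref{lemma:normal-p-complexes}. I decompose each $C_i$ ($i = 1,2$) as a direct sum of cyclic graded $A = k[d]/(d^p)$-supermodules of the four types $A\subgrp{n}$, $k\subgrp{n}$, $\Pi(A)\subgrp{n}$, $\Pi(k)\subgrp{n}$. Then $C_1 \otimes C_2$ decomposes as the direct sum of the pairwise tensor products of these summands, each equipped with the diagonal $A$-action, so it suffices to verify that every such pairwise tensor product is again a direct sum of cyclic modules of the four allowed types; normality of $C_1 \otimes C_2$ will then follow from a second application of Lemma~\ref{lemma:normal-p-complexes}.

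The pairwise tensor products in which at least one factor is of the form $k\subgrp{n}$ or $\Pi(k)\subgrp{n}$ are easy: the diagonal action collapses to the action on the other factor (possibly with a parity flip), so the result is a single cyclic module of one of the four types, with degree shifted by the sum of the gradings. The main technical step is the claim
\[
A\subgrp{i} \otimes A\subgrp{j} \cong \bigoplus_{\ell=0}^{p-1} A\subgrp{i+j+\ell\alpha}
\]
as graded $A$-supermodules (where $\alpha = \deg d$), together with the evident parity-shifted variants. The cleanest way to see this uses the fact that $A$ is a graded cocommutative Hopf algebra with $d$ primitive and antipode $S(d) = -d$: the $k$-linear map $\phi \colon A \otimes A \to A \otimes A$ defined by $\phi(x \otimes y) = \sum x_{(1)} \otimes x_{(2)} y$ intertwines the action of $A$ by left multiplication on the first tensor factor (on the source) with the diagonal $A$-action (on the target), and has two-sided inverse $x \otimes y \mapsto \sum x_{(1)} \otimes S(x_{(2)}) y$. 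The source is visibly free of rank $p$ over $A$ with basis $\{1 \otimes d^\ell : 0 \leq \ell \leq p-1\}$, so the target is too; tracking the $\Z$-degrees of these generators yields the claimed graded decomposition.

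Once the tensor product computation is in hand, Lemma~\ref{lemma:normal-p-complexes} identifies $\Hstar^n(C_1 \otimes C_2)$ with the direct sum of the $k\subgrp{n}$- and $\Pi(k)\subgrp{n}$-type summands appearing in the decomposition. These trivial summands arise precisely from tensoring $k$- or $\Pi(k)$-type summands of $C_1$ against $k$- or $\Pi(k)$-type summands of $C_2$ whose degrees sum to $n$, since summands of type $A\subgrp{n}$ or $\Pi(A)\subgrp{n}$ contribute nothing to cohomology regardless of which factor they occupy; this is exactly the description claimed in the theorem. Finally, the map $\zeta$ of~\eqref{eq:Kunnethmap} realizes the resulting isomorphism because the generator of a $k$- or $\Pi(k)$-type summand of $C_i$ is a cocycle, the tensor product of two such cocycles is a cocycle generating the corresponding summand of $C_1 \otimes C_2$, and the relation $\zeta([x] \otimes [y]) = [x \otimes y]$ holds by the very definition of $\zeta$. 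The principal obstacle throughout is the freeness of $A \otimes A$ under the diagonal action, which is where the Hopf algebra structure of $A$ is essential.
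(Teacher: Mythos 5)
Your proposal is correct and follows essentially the same route as the paper: decompose each $C_i$ via Lemma~\ref{lemma:normal-p-complexes}, then use the Hopf-algebra fact that tensor products of free $k[d]/(d^p)$-modules are free (which the paper cites without proof and which you spell out explicitly via the standard $x \otimes y \mapsto \sum x_{(1)} \otimes x_{(2)}y$ untwisting isomorphism). Your added computation $A\subgrp{i} \otimes A\subgrp{j} \cong \bigoplus_{\ell=0}^{p-1} A\subgrp{i+j+\ell\alpha}$ is a correct and useful unpacking of the degree bookkeeping that the paper's one-line proof leaves to the reader.
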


\begin{proof}
This follows from Lemma \ref{lemma:normal-p-complexes} and the general fact for Hopf algebras that the tensor product of two free modules is again free.
\end{proof}

\subsection{Construction of Troesch \texorpdfstring{$p$-complexes}{p-complexes}}

Given $W \in \bsv$ and an even linear map $\phi: W \to W$, let $\bss(\phi): \bss_W \to \bss_W$ be the natural transformation whose degree-$n$ component is defined by
	\begin{equation} \label{eq:S(psi)}
	\bss^n(\phi)(U) = \bss^n(\gamma_n(\phi \otimes 1_U)): \bss^n(W \otimes U) \to \bss^n(W \otimes U).
	\end{equation}
Then $\bss(\phi)(U): \bss(W \otimes U) \to \bss(W \otimes U)$ is the superalgebra homomorphism that is induced via multiplicativity from the even linear map $\phi \otimes 1_U: W \otimes U \to W \otimes U$. Now for $d \in \N$, let $\phi_d \in \Hom_{\bsp}(\bss_W,\bss_W)$ be the homomorphism whose degree-$n$ component $\phi_d: \bss_W^n \to \bss_W^n$ is equal to the following component of the convolution morphism $\id \star \bss(\phi)$:
	\[
	\phi_d: \bss_W^n \xrightarrow{\Delta_{n-d,d}} \bss_W^{n-d} \otimes \bss_W^d \xrightarrow{\id \otimes \bss^d(\phi)} \bss_W^{n-d} \otimes \bss_W^d \xrightarrow{m} \bss_W^n.
	\]
Equivalently, $\phi_d: \bss_W^n \to \bss_W^n$ is the homomorphism that corresponds as in \eqref{eq:SYoneda} to the morphism
	\[
	\gamma_{d}(\phi) \cdot \gamma_{n-d}(1_W) \in \bsg^n \Hom_k(W,W).
	\]
If $n < d$, then $\phi_d: \bss_W^n \to \bss_W^n$ is the zero map, while for $n=d$ one has $\phi_n = \bss^n(\phi) : \bss_W^n \to \bss_W^n$.

Recall from Section \ref{subsubsec:gradings} that if $W$ is a graded superspace, then $\bss_W$ is endowed with a $\Z$-grading, $\bss_W = \bigoplus_{i \in \Z} \bss(W \otimes -)^i$, which makes $\bss(W \otimes U)$ a graded superalgebra for each $U \in \bsv$.

\begin{lemma}[Troesch \cite{Troesch:2005}, Touz\'{e} \cite{Touze:2012}] \label{lemma:convolution-components}
Let $W \in \bsv$ be a graded superspace, and let $\phi: W \to W$ be an even linear map that raises $\Z$-degrees by an integer $\alpha$. Then the morphisms $\phi_d: \bss_W \to \bss_W$ satisfy the following properties:
	\begin{enumerate}
	\item For each $U \in \bsv$, $\phi_d$ maps $\bss(W \otimes U)^\ell$ into $\bss(W \otimes U)^{\ell+d\alpha}$.
%	\item For each $U \in \bsv$, $\phi_1(U)$ is an algebra derivation.
	\item If $\phi^p = 0$, then $(\phi_d)^p = 0$.
	\item \label{item:phi-psi-commute} If $\psi: W \to W$ is another even linear map such that $\phi \circ \psi = \psi \circ \phi$, then for all $d,e \in \N$ one has $\phi_d \circ \psi_e = \psi_e \circ \phi_d$.
	\item \label{item:phiderivation} Let $U \in \bsv$, and let $x,y \in \bss(W \otimes U)$. Then $\phi_d(xy) = \sum_{\ell=0}^d \phi_\ell(x) \phi_{d-\ell}(y)$. In particular, $\phi_1$ is an algebra derivation.
	\item \label{item:phi-on-p-powers} Let $U \in \bsv$, let $x \in \bss(W \otimes U)$, and let $r \in \N$. Then $\phi_d(x^{p^r}) = [\phi_{d/p^r}(x)]^{p^r}$ if $p^r$ divides $d$, and $\phi_d(x^{p^r}) = 0$ otherwise.
	\end{enumerate}
\end{lemma}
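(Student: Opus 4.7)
The plan is to identify the sum $\Phi := \sum_{d \geq 0} \phi_d$, which by construction equals the convolution $\id \star \bss(\phi)$, as a specific superalgebra endomorphism; once this identification is in hand, the five parts follow by routine manipulations. Concretely, I claim that for each $U \in \bsv$, the operator $\Phi(U): \bss(W \otimes U) \to \bss(W \otimes U)$ is the unique superalgebra endomorphism extending the even linear map $1 + (\phi \otimes 1_U)$ on the generating subspace $W \otimes U \subseteq \bss^1(W \otimes U)$. To verify this, one first shows that $\Phi(U) = m \circ (\id \otimes \bss(\phi)(U)) \circ \Delta$ is itself a superalgebra morphism: because $\bss(W \otimes U)$ is a supercommutative, supercocommutative super-bialgebra and both $\id$ and $\bss(\phi)(U)$ are even superalgebra maps, a direct Sweedler-type calculation (sliding $f(y_{(1)})$ past $g(x_{(2)})$ using supercommutativity) shows that convolution preserves the product. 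Second, since a generator $e = w \otimes u$ is primitive, $\Phi(U)(e) = e + (\phi \otimes 1_U)(e)$ by direct computation, and two superalgebra morphisms agreeing on a generating set coincide.

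Granted this identification, parts (1) and (4) are essentially formal. For (1), the multiplicative extension of the sum of a degree-preserving operator and a degree-$\alpha$-shifting operator decomposes naturally into homogeneous components shifting $\Z$-degree by $0, \alpha, 2\alpha, \dots$; these must coincide with the $\phi_d$ by uniqueness of the decomposition. For (4), the morphism identity $\Phi(xy) = \Phi(x) \Phi(y)$, with both sides expanded and sorted by $\Z$-degree, gives the stated Leibniz-type expansion of $\phi_d(xy)$. For (3), introduce a formal variable and consider $\Phi(s) := \sum_d s^d \phi_d$ over $k[s]$, which by the identification is the multiplicative extension of $1 + s(\phi \otimes 1_U)$; define $\Psi(t)$ analogously. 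Then $\Phi(s) \circ \Psi(t)$ and $\Psi(t) \circ \Phi(s)$ are the multiplicative extensions of $(1 + s\phi)(1 + t\psi)$ and $(1 + t\psi)(1 + s\phi)$ on $W \otimes U$, and these agree because $\phi \circ \psi = \psi \circ \phi$; extracting coefficients of $s^d t^e$ yields $\phi_d \circ \psi_e = \psi_e \circ \phi_d$.

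For (5), multiplicativity of $\Phi$ gives $\Phi(x^{p^r}) = \Phi(x)^{p^r}$. When $x$ is even, each $\phi_\ell(x)$ is also even and the summands of $\Phi(x) = \sum_\ell \phi_\ell(x)$ commute pairwise in $\bss(W \otimes U)$; the iterated characteristic-$p$ Frobenius identity then gives $\Phi(x)^{p^r} = \sum_\ell \phi_\ell(x)^{p^r}$, and matching components of $\Z$-degree $p^r \deg(x) + d\alpha$ on the two sides yields the formula. The case of $x$ odd is trivial since $x^{p^r} = 0$ (as $x^2 = 0$ and $p^r \geq 2$ for $r \geq 1$; the $r = 0$ case is vacuous). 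Finally, for (2), the same Frobenius principle applied to $\Phi$ itself shows that $\Phi^p$ is the multiplicative extension of $(1 + \phi)^p = 1 + \phi^p = 1$ in characteristic $p$, hence $\Phi^p = \id$. Writing $\Phi = \id + N$ with $N = \sum_{i \geq 1} \phi_i$, part (3) applied with $\psi = \phi$ forces the $\phi_i$ to commute pairwise, whence the Frobenius identity yields $0 = \Phi^p - \id = N^p = \sum_{i \geq 1} \phi_i^p$; since $\phi_i^p$ shifts $\Z$-degree by the distinct amount $pi\alpha$, each summand vanishes separately, giving $(\phi_i)^p = 0$ for every $i \geq 1$. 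The main technical obstacle will be tracking Koszul signs carefully in the Sweedler argument verifying that $\Phi(U)$ is a superalgebra morphism; everything else is formal manipulation once the key identification is established.
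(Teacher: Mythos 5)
Your central observation---that $\Phi := \sum_{d}\phi_d$ is the superalgebra endomorphism of $\bss(W\otimes U)$ induced on primitives by $1 + \phi\otimes 1_U$---is exactly the paper's identification of $D = \id\star\bss(\phi)$ as a superalgebra homomorphism, and your Sweedler-sign argument for multiplicativity of the convolution is the same content as the paper's remark that this follows from the commutativity and cocommutativity of $\bss(W\otimes U)$. Where you diverge from the paper is in how parts (2) and (3) are handled: the paper dispenses with these by observing that the relations $\phi_d^p=0$ and $\phi_d\psi_e=\psi_e\phi_d$ already hold at the level of the morphisms $\gamma_d(\phi)\gamma_{n-d}(1)\in\bsg^n\Hom_k(W,W)$, i.e.\ as identities in the divided power (Schur) algebra, independently of the convolution picture. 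Your route instead derives them from the multiplicative-extension picture via the formal scalars $s,t$ (for (3)) and a Frobenius-plus-$\Z$-degree argument (for (2)); this is a more uniform treatment in that it uses a single conceptual input throughout, but it is somewhat less elementary for (3), and your deduction of (2) requires first having (3) in hand, whereas in the paper (2) and (3) are independent facts.

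One genuine soft spot: in parts (2), (4), and (5) you isolate individual $\phi_d$-components by sorting by $\Z$-degree, which requires the shift $\alpha$ to be nonzero. As stated the lemma permits $\alpha = 0$ (it says ``an integer $\alpha$''), and in that case all the $\phi_d$ shift $\Z$-degree by $0$ and the decomposition collapses. The paper's own argument for (4) avoids this by using a scalar-rescaling (eigenspace) trick---essentially the formal-variable bookkeeping you already employ in (3), where replacing $\phi$ by $s\phi$ replaces $\phi_d$ by $s^d\phi_d$. You should use that same rescaling device uniformly in (2), (4), and (5) instead of $\Z$-degree separation; the fix is purely cosmetic, but as written your proof silently assumes $\alpha \ne 0$. (Also worth stating explicitly in (5) is the reduction of a general $x = x_{\zero} + x_{\one}$ to its even part via $x^{p^r} = x_{\zero}^{p^r}$ for $r\ge 1$, which you allude to but do not spell out.)
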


\begin{proof}
The first statement is immediate from the definition of $\phi_d$, and the relations in the second and third statements hold at the level of morphisms in $\bsg^n \Hom_k(W,W)$. For the fourth and fifth statements, let $D = \id \star \bss(\phi)$ be the convolution morphism
	\[
	 D: \bss(W \otimes U) \xrightarrow{\Delta} \bss(W \otimes U) \otimes \bss(W \otimes U) \xrightarrow{\id \otimes \bss(\phi)} \bss(W \otimes U) \otimes \bss(W \otimes U) \xrightarrow{m} \bss(W \otimes U).
	\]
It follows from the definitions that $D(z) = \sum_{d \geq 0} \phi_d(z)$. For each fixed $z$ this sum is finite, because $\phi_d$ is zero on $\bss^n(W \otimes N)$ when $d > n$. Using the commutativity and cocommutativity of the Hopf superalgebra $\bss(W \otimes U)$, one can check that $D$ is a superalgebra homomorphism. Now \eqref{item:phi-on-p-powers} follows from the relation $D(x^{p^r}) = [D(x)]^{p^r}$ and the binomial theorem modulo $p$. For  \eqref{item:phiderivation}, one has
	\begin{equation}
	\sum_{d \geq 0} \phi_d(xy) = D(xy) = D(x)D(y) = \sum_{d \geq 0} \sum_{i+j=d} \phi_i(x)\phi_j(y).
	\end{equation}
Further, for $\lambda \in k$ one has $\phi_d( (\lambda \cdot x)(\lambda \cdot y)) = \lambda^{2d} \cdot \phi_d(xy)$ and $\phi_i(\lambda \cdot x) \phi_j(\lambda \cdot y) = \lambda^{i+j} \cdot \phi_i(x)\phi_j(y)$. Then equating eigenspaces (perhaps after extending scalars to a sufficiently large field), it follows for all $d \geq 0$ that $\phi_d(xy) = \sum_{i+j=d} \phi_i(x) \phi_j(y)$, as desired.
\end{proof}

Following Touz\'{e}'s notation \cite[\S9]{Touze:2012}, let $\Sha_1$ be a $p$-dimensional purely even graded superspace with basis $\sha_0,\ldots,\sha_{p-1}$ such that $\deg(\sha_i) = i$, and let $\rho: \Sha_1 \to \Sha_1$ be the linear map such that $\rho(\sha_i) = \sha_{i+1}$ if $0 \leq i \leq p-2$, and $\rho(\sha_{p-1}) = 0$. For $\ell \in \N$, let $\Sha_1^{(\ell)}$ be the $\ell$-th Frobenius twist of $\Sha_1$. By the conventions of Section \ref{subsubsec:gradings}, the Frobenius twist functor $\bsi^{(\ell)}$ multiplies $\Z$-degrees by $p^\ell$, so $\Sha_1^{(\ell)}$ is spanned by the vectors $\sha_0^{(\ell)},\ldots,\sha_{p-1}^{(\ell)}$, with $\deg(\sha_i^{(\ell)}) = p^\ell i$. Let $\rho^{(\ell)}: \Sha_1^{(\ell)} \to \Sha_1^{(\ell)}$ be the $\ell$-th Frobenius twist of $\rho$, i.e., the linear map such that $\rho^{(\ell)}(\sha_i^{(\ell)}) = \sha_{i+1}^{(\ell)}$ if $0 \leq i \leq p-2$, and $\rho^{(\ell)}(\sha_{p-1}^{(\ell)}) = 0$. Finally, for each positive integer $r$, set $\Sha_r = \Sha_1^{(0)} \otimes \Sha_1^{(1)} \otimes \cdots \otimes \Sha_1^{(r-1)}$, where by convention $\Sha_1^{(0)} = \Sha_1$. Then $\Sha_r$ is a $p^r$-dimensional purely even graded superspace that is one-dimensional in each $\Z$-degree from $0$ to $p^r-1$. Given an integer $0 \leq i < p^r$, let $i = i_0 + i_1 p + \cdots + i_{r-1} p^{r-1}$ be the base-$p$ decomposition of $i$, and set
	\begin{equation} \label{eq:shabasis}
	\sha_i = \sha_{i_0}^{(0)} \otimes \sha_{i_1}^{(1)} \otimes \cdots \otimes \sha_{i_{r-1}}^{(r-1)} \in \Sha_r.
	\end{equation}
Then $\sha_0,\sha_1,\ldots,\sha_{p^r-1}$ is a basis for $\Sha_r$, with $\deg(\sha_i) = i$. For $0 \leq s < r$, let
	\[
	\rho_s = 1_{\Sha_1^{(0)}} \otimes \cdots \otimes 1_{\Sha_1^{(s-1)}} \otimes \rho^{(s)} \otimes 1_{\Sha_1^{(s+1)}} \otimes \cdots \otimes 1_{\Sha_1^{(r-1)}},
	\]
i.e., $\rho_s: \Sha_r \to \Sha_r$ is the map that acts via $\rho^{(s)}$ on the factor $\Sha_1^{(s)}$, and acts as the identity on all other tensor factors of $\Sha_r$. In terms of the basis vectors \eqref{eq:shabasis},
	\[
	\rho_s(\sha_i) = \begin{cases} \sha_{i+p^s} & \text{if $0 \leq i_s \leq p-2$,} \\ 0 & \text{if $i_s = p-1$.} \end{cases}
	\]
Then $\rho_s$ raises $\Z$-degrees by $p^s$ and $(\rho_s)^p = 0$. If $0 \leq s,s' < r$, then $\rho_s \circ \rho_{s'} = \rho_{s'} \circ \rho_s$.

Let $n \in \N$. By Section \ref{subsubsec:gradings}, $\bss^n(\Sha_r \otimes -)$ inherits a non-negative $\Z$-grading from $\Sha_r$. Set
	\[
	\bsb_n(r) = \bsb_n(r)^\bullet = \bss^n(\Sha_r \otimes -)^\bullet,
	\]
and set $\bsb(r) = \bigoplus_{n \in \N} \bsb_n(r)$. Given integers $0 \leq s < r$ and $\ell > 0$, let
	\[
	d_\ell^s = (\rho_{r-1-s})_\ell: \bss(\Sha_r \otimes -) \to \bss(\Sha_r \otimes -).
	\]
Then:
	\begin{enumerate}
	\item \label{item:raisesZdegree} For all $\ell > 0$ and $0 \leq s < r$, $d_\ell^s$ raises $\Z$-degrees by $\ell p^{r-1-s}$.
	\item \label{item:pdifferential} For all $\ell > 0$ and $0 \leq s < r$, $(d_\ell^s)^p = 0$.
	\item \label{item:commute} For all $\ell,m > 0$ and $0 \leq s,t < r$, $d_\ell^s \circ d_m^t = d_m^t \circ d_\ell^s$.
	\end{enumerate}
Now set
	\[
	d(r) = d_1^0 + d_p^1 + \cdots + d_{p^{r-1}}^{r-1}.
	\]
Properties \eqref{item:raisesZdegree}--\eqref{item:commute} imply that $d(r)$ makes $\bsb(r)$ into a $p$-complex in which the $p$-differential raises $\Z$-degrees by $p^{r-1}$. Since $d(r)$ preserves polynomial degrees, $\bsb(r)$ is a direct sum of $p$-complexes, $\bsb(r) = \bigoplus_{n \in \N} \bsb_n(r)$. Our goal in Sections \ref{subsec:r=1} and \ref{subsec:r-greater-1} is to prove:

\begin{theorem} \label{theorem:B(r)-cohomology}
Let $r$ be a positive integer. Then $d(r)$ makes $\bsb(r)$ into a normal $p$-complex, and $\Hstarbul(\bsb(r)) \cong \bss^{(r)}$ as strict polynomial superfunctors. Specifically, $\Hstarbul(\bsb_n(r)) = 0$ if $p^r \nmid n$, and $\Hstarbul(\bsb_{p^rn}(r)) \cong \bss^{n(r)}$, with the summand $S_0^{n-\ell(r)} \otimes \Lambda_1^{\ell(r)}$ of $\bss^{n(r)}$ in cohomology degree $\ell \cdot \binom{p^r}{2}$.
\end{theorem}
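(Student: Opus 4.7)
My plan is to prove Theorem \ref{theorem:B(r)-cohomology} by induction on $r \geq 1$, with the base case $r = 1$ handled in Section \ref{subsec:r=1} and the inductive step in Section \ref{subsec:r-greater-1}. For $r = 1$, the differential $d(1) = (\rho_0)_1$ is a genuine superalgebra derivation by Lemma~\ref{lemma:convolution-components}\eqref{item:phiderivation}. Writing $V = V_0 \oplus V_1$, the isomorphism of graded superalgebras $\bss(\Sha_1 \otimes V) \cong S(\Sha_1 \otimes V_0) \gotimes \Lambda(\Sha_1 \otimes V_1)$ identifies $d(1)$ with the tensor-product $p$-differential of Section \ref{subsec:tensor-product-p-complexes} on the two factors, and the K\"unneth Theorem~\ref{thm:Kunneth} reduces the computation to the purely even and purely odd subcases. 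The purely even subcase recovers Troesch's classical calculation, yielding $\Hstarbul \cong S^n(V_0^{(1)})$ in $\Z$-degree $0$ (see \cite{Troesch:2005,Touze:2012}). For the purely odd subcase $V = V_1$, I would compute $\Hstarbul(\Lambda^{pn}(\Sha_1 \otimes V_1))$ directly, expecting the cohomology class of $\prod_{i=1}^n(\sha_0 v_i \wedge \cdots \wedge \sha_{p-1} v_i)$, for $v_1,\dots,v_n \in V_1$, to represent $v_1^{(1)} \wedge \cdots \wedge v_n^{(1)} \in \Lambda^n(V_1^{(1)})$ in $\Z$-degree $n\binom{p}{2}$. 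Combining the two subcases via Theorem~\ref{thm:Kunneth} then places the summand $S_0^{n-\ell(1)} \otimes \Lambda_1^{\ell(1)}$ in $\Z$-degree $\ell\binom{p}{2}$.

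For $r > 1$, the main obstruction to a direct K\"unneth argument is that the higher convolutions $(\rho_{r-1-s})_{p^s}$ with $s \geq 1$ are no longer derivations: the expansion $\phi_{p^s}(xy) = \sum_{i+j=p^s}\phi_i(x)\phi_j(y)$ has nontrivial interior terms, so $d(r)$ does not factor simply through the even/odd decomposition, nor through the naive factorization $\bsb(r) \cong \bsb(r-1) \circ (\Sha_1^{(r-1)} \otimes -)$ suggested by $\Sha_r = \Sha_{r-1} \otimes \Sha_1^{(r-1)}$. My approach would be to exploit part~\eqref{item:phi-on-p-powers} of Lemma~\ref{lemma:convolution-components}, which controls how each convolution interacts with $p^{r-1}$-th powers: on such powers, $(\rho_{r-1-s})_{p^s}$ vanishes for $s < r-1$ and reduces to a $p^{r-1}$-th power of $(\rho_0)_1$ for $s = r-1$. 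A spectral sequence or filtration argument should then collapse the resulting two-step structure, reducing the $r$-th case to an outer application of the $r = 1$ case on a Frobenius-twisted input combined with the inductive $(r-1)$-st case.

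The most subtle point, which I expect to require the greatest care, is verifying normality in the sense of Lemma~\ref{lemma:normal-p-complexes}: that $\bsb(r)$ decomposes as a graded $k[d]/(d^p)$-supermodule into summands only of the four permitted cyclic types $A\subgrp{i}$, $k\subgrp{i}$, $\Pi(A)\subgrp{i}$, $\Pi(k)\subgrp{i}$. Once normality is established, Lemma~\ref{lemma:normal-p-complexes} identifies $\Hstarbul(\bsb(r))$ with the summands of types $k\subgrp{j}$ and $\Pi(k)\subgrp{j}$, so normality and the cohomology identification proceed hand-in-hand: a sufficiently explicit cyclic decomposition of $\bsb(r)$ as a graded $k[d]/(d^p)$-supermodule simultaneously yields both statements, together with the precise $\Z$-degree placement of each $S_0^{n-\ell(r)} \otimes \Lambda_1^{\ell(r)}$ summand at $\ell\binom{p^r}{2}$.
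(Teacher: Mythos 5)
Your $r=1$ outline matches the paper's strategy: reduce via K\"unneth (Theorem~\ref{thm:Kunneth}), quote Troesch for the even part, and do a new computation in the odd part. The paper pushes the K\"unneth reduction all the way to one-dimensional summands before doing the odd calculation, which is the genuinely nontrivial step: it introduces an auxiliary $p$-acyclic complex $D_n^\bullet$ and comparison maps $\varphi,\psi,s$ to show $\Hone^\bullet(\bsb_n(U)) = 0$ for $1 \le n < p$ with $U$ one-dimensional odd. Your proposed representative cocycles are correct, but "compute directly" understates this step, and you also omit the check (Section~\ref{subsubsec:r=1morphisms}) that $\eta(U)$ is natural in $U$, i.e., a morphism of superfunctors.

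For $r>1$ the high-level instinct (peel off the $(\rho_0)_{p^{r-1}}$ term, argue inductively) is right, but the mechanism is missing. The useful factorization is not the one you dismiss ($\Sha_r = \Sha_{r-1}\otimes\Sha_1^{(r-1)}$) but $\Sha_r = \Sha_1 \otimes \Sha_{r-1}^{(1)}$, which puts $\Sha_1 = \Sha_1^{(0)}$ on the inside and gives a $p$-bicomplex structure with $d'=d_1^0+\cdots+d_{p^{r-2}}^{r-2}$ and $d''=(\rho_0)_{p^{r-1}}$; the inductive case computes $\Hstarbul(\cdot,d')$ and the $r=1$ case (on the twisted input $U^{(r-1)}$) computes $\Hstarbul(\cdot,d'')$. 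The crucial difficulty your plan glosses over is that there is no spectral sequence theory for $p$-bicomplexes: the paper must pass to the ordinary contracted complexes $\bsbst$, filter them by rows, and analyze the resulting genuine spectral sequences, including a nontrivial $\Z$-degree congruence argument locating the only pages with nonzero differentials and a check that $E_\infty$ is concentrated in a single bidegree for each total degree. That analysis is most of the $r>1$ proof, not a formality. Finally, your route to normality — exhibiting an explicit cyclic $k[d]/(d^p)$-decomposition of $\bsb(r)$ — is not what the paper does and is considerably harder; the paper instead shows $\Hs^m(\bsb(r))$ is the same graded object for every $1\le s<p$, and invokes Webb's theorem plus Lemma~\ref{lemma:normal-p-complexes} to force all cyclic summands to have $j\in\set{1,p}$. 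For $r>1$, where $d(r)$ is not a derivation, I do not see how you would produce the explicit decomposition your plan calls for.
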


\subsection{The case \texorpdfstring{$r=1$}{r=1}} \label{subsec:r=1}

In this section set $\bsb = \bsb(1) = \bss(\Sha_1 \otimes -)$, $\bsb_n^\ell = \bsb_n(1)^\ell$, and $d = d(1) = (\rho_0)_1$. Since $d$ is an algebra derivation by Lemma \ref{lemma:convolution-components}\eqref{item:phiderivation}, it follows that $\Hone^\bullet(\bsb)$ inherits a superalgebra structure via the K\"{u}nneth map \eqref{eq:Kunnethmap} and multiplication in $\bss$.

Our goal in Section \ref{subsec:r=1} is to prove the $r=1$ case of Theorem \ref{theorem:B(r)-cohomology}. First we'll show for each $U \in \bsv$ that $\bsb(U)$ is a normal $p$-complex, and that there exists an isomorphism of graded superalgebras $\eta(U): \bss^{(1)}(U) \to \Hstarbul(\bsb(U))$. Then in Section \ref{subsubsec:r=1morphisms} we'll show that this family of isomorphisms lifts to an isomorphism of strict polynomial superfunctors.

\subsubsection{Definition of the isomorphism} \label{subsubsec:define-eta}

Fix $U \in \bsv$, and fix a homogeneous basis $u_1,\ldots,u_m$ for $U$. Then $u_1^{(1)},\ldots,u_m^{(1)}$ is a homogeneous basis for $U^{(1)}$. Define a $k$-linear map $U^{(1)} \to \bsb(U) = \bss(\Sha_1 \otimes U)$ on basis elements by
	\begin{equation} \label{eq:eta-on-generators}
	u_i^{(1)} \mapsto \begin{cases}
	(\sha_0 \otimes u_i)^p & \text{if $u_i \in \Uzero$,} \\
	(\sha_0 \otimes u_i) \cdot (\sha_1 \otimes u_i) \cdots (\sha_{p-1} \otimes u_i) & \text{if $u_i \in \Uone$.}
	\end{cases}
	\end{equation}
This linear map uniquely extends to a homomorphism of graded superalgebras
	\[
	\eta(U): S(\Uzero^{(1)}) \otimes \Lambda(\Uone^{(1)}) \to \bss(\Sha_1 \otimes U) = \bsb(U),
	\]
where we consider $\Uzero^{(1)}$ to be concentrated in $\Z$-degree $0$, and we consider $\Uone^{(1)}$ to be concentrated in $\Z$-degree $\binom{p}{2}$. Since $d$ is an algebra derivation, it follows that the image of $\eta(U)$ consists of cocycles in $\bsb(U)$. Then $\eta(U)$ induces a homomorphism of graded superalgebras
	\begin{equation} \label{eq:eta(U)}
	\eta(U): \bss^{(1)}(U) = S(\Uzero^{(1)}) \otimes \Lambda(\Uone^{(1)}) \to \Hone^\bullet(\bsb(U)),
	\end{equation}
which by abuse of notation we have also denoted $\eta(U)$. Our goal is to show that this map is an isomorphism of graded superalgebras.

\subsubsection{Reduction to the one-dimensional case} \label{subsubsec:reduction-to-1-dim}

Recall that $\bss$ is an exponential superfunctor; see \cite[\S2.5]{Drupieski:2016}. Then $\bss^{(1)}$ and $\bsb$ are also exponential superfunctors, and the exponential formula for $\bsb$ takes the form
	\[
	\bsb_n^\ell(V \oplus W) \cong \bigoplus_{i+j = n} \bigoplus_{s+t = \ell} \bsb_i^s(V) \otimes \bsb_j^t(W).
	\]
Applying this to the decomposition $U = \bigoplus_{i=1}^m k \cdot u_i$, and using the fact that $d$ is a derivation, it follows that $\bsb(U)$ is isomorphic to the tensor product of complexes $\bsb(k \cdot u_1) \otimes \cdots \otimes \bsb(k \cdot u_m)$. Now to show that $\bsb(U)$ is a normal $p$-complex, and to show that \eqref{eq:eta(U)} is an isomorphism of graded superalgebras, it suffices by Theorem \ref{thm:Kunneth} and the definition of $\eta(U)$ to prove these claims in the special case when $U$ is a one-dimensional superspace. The case when $U$ is a one-dimensional purely even superspace is covered by the calculations in \cite[\S3]{Troesch:2005} (since when $U$ is purely even, the complex $\bsb(U)$ reduces to the complex originally defined by Troesch), so we may assume that $U$ is a one-dimensional purely odd superspace. Our calculations are modeled on those in \cite[\S3]{Troesch:2005}.

\subsubsection{Calculations in the one-dimensional purely odd case} \label{subsubsec:1-dim-purely-odd}

In this subsection assume that $U = k \cdot u$ is a one-dimensional purely odd superspace spanned by $u$, and set $w_i = \sha_i \otimes u \in \Sha_1 \otimes U$. The exponential isomorphism for $\bss$, applied to the decomposition $\Sha_1 \otimes U = \bigoplus_{i=0}^{p-1} k \cdot w_i$, gives
	\begin{equation} \label{eq:r=1Biso}
	\bsb_n^\ell(U) \cong \bigoplus_{\substack{a_0+a_1 + \cdots+a_{p-1} = n \\ a_0 \cdot 0 + a_1 \cdot 1 + \cdots + a_{p-1} \cdot (p-1) = \ell}} \Lambda^{a_0}(w_0) \otimes \Lambda^{a_1}(w_1) \otimes \cdots \otimes \Lambda^{a_{p-1}}(u_{p-1}).
	\end{equation}
In particular, $\bsb_n(U) = 0$ for $n > p$, and $\bsb_p(U)$ is equal to the single summand
	\[
	\Lambda^1(w_0) \otimes \Lambda^1(w_1) \otimes \cdots \otimes \Lambda^1(w_{p-1}),
	\]
located in $\Z$-degree $\ell = \binom{p}{2}$. This implies that $\bsb_n(U)$ is a normal $p$-complex for $n \geq p$, and that \eqref{eq:eta(U)} is an isomorphism when restricted to the components of polynomial degree $n \geq p$. Then to finish the calculations for the one-dimensional purely odd case, it suffices by Remark \ref{remark:p-acyclic-equivalent} to show for $1 \leq n < p$ that $\Hone^\bullet(\bsb_n(U)) = 0$. So assume for the rest of this subsection that $1 \leq n < p$. %Our calculations are modeled on those in \cite[\S3]{Troesch:2005}.

Let $C = \Lambda(w_0,w_1,\ldots,w_{p-1})$, identified with $\bsb(U)$ via \eqref{eq:r=1Biso}, and let $d_C : C^\ell \to C^{\ell+1}$ be the corresponding differential. Then $d_C$ is the unique algebra derivation such that $d_C(w_i) = w_{i+1}$ if $0 \leq i \leq p-2$, and $d_C(w_{p-1}) = 0$. Set $D_n = k[x_1]/(x_1^p) \otimes \cdots \otimes k[x_n]/(x_n^p)$. (We do not define a superspace structure on $D_n$.) Dropping the tensor product symbols, the set of monomials $\{x_1^{b_1} \cdots x_n^{b_n} : 0 \leq b_j < p \}$ is a basis for $D_n$. Let $D_n^\ell$ be the subspace of $D_n$ spanned by the monomials $x_1^{b_1} \cdots x_n^{b_n}$ such that $\sum_{j=1}^n b_j = \ell$, and define $d_D: D_n^\ell \to D_n^{\ell+1}$ by
	\[
	d_D(x_1^{b_1} \cdots x_n^{b_n}) = \sum_{i=1}^n x_1^{b_1} \cdots x_i^{b_i+1} \cdots x_n^{b_n}.
	\]
Then $(D_n^\bullet,d_D) \cong (D_1^\bullet,d_D)^{\otimes n}$. It is immediately verified that $D_1^\bullet$ is $p$-acyclic (the differential defines isomorphisms $D_1^\ell \cong D_1^{\ell+1}$ for $0 \leq \ell \leq p-2$), so then $D_n^\bullet$ is $p$-acyclic by Theorem \ref{thm:Kunneth}.

Define a $k$-linear map $\varphi: D_n^\ell \to C_n^\ell$ by
	\[
	\varphi(x_1^{b_1} x_2^{b_2} \cdots x_n^{b_n}) = w_{b_1} w_{b_2} \cdots w_{b_n},
	\]
and define a $k$-linear map $\psi: C_n^\ell \to D_n^\ell$ by
	\[
	\psi(w_{i_1} w_{i_2} \cdots w_{i_n}) = x_1^{i_1} x_2^{i_2} \cdots x_n^{i_n} \quad \text{if } i_1 < i_2 < \cdots < i_n.
	\]
We'll show that $\Hone^\bullet(\bsb_n(U)) = \Hone^\bullet(C_n) = 0$ by relating the $p$-complexes $C_n^\bullet$ and $D_n^\bullet$ via the maps $\varphi$ and $\psi$. First, the following properties are straightforward to verify:
	\begin{enumerate}
	\item \label{item:varphi-chian-map} $d_C \circ \varphi = \varphi \circ d_D$, so $\varphi$ is a homomorphism of $p$-complexes.
	\item \label{item:Troesch-3.3.3} $\varphi \circ \psi = \id_C: C \to C$, so $\varphi$ is a surjection.
	\item \label{item:Troesch-3.3.4} $\psi \circ \varphi \circ d_D \circ \psi = \psi \circ d_C$.
	\end{enumerate}
Then by \eqref{item:Troesch-3.3.3} and \eqref{item:Troesch-3.3.4},
	\begin{enumerate}[resume]
	\item \label{item:Troesch-3.3.6} $d_C = \varphi \circ \psi \circ d_C = \varphi \circ \psi \circ \varphi \circ d_D \circ \psi = \varphi \circ d_D \circ \psi$.
	\end{enumerate}
Next, the symmetric group $\fS_n$ acts on the basis monomials for $D_n$ by the formula
	\[
	\sigma \cdot x_1^{b_1} \cdots x_n^{b_n} = (-1)^{\sigma} x_1^{b_{\sigma^{-1}(1)}} \cdots x_n^{b_{\sigma^{-1}(n)}},
	\]
where $(-1)^{\sigma}$ denotes the sign of the permutation $\sigma$. Define a $k$-linear map $s: D_n^\ell \to D_n^\ell$ by
	\[
	s(x_1^{b_1} \cdots x_n^{b_n}) = \frac{1}{n!} \sum_{\sigma \in \fS_n} \sigma \cdot x_1^{b_1} \cdots x_n^{b_n}.
	\]
This formula makes sense by the assumption that $1 \leq n < p$. Then
	\begin{enumerate}[resume]
	\item \label{item:Troesch-3.5.2} $\varphi(\sigma \cdot y) = \varphi(y)$ for all $y \in D_n^\ell$, and hence $\varphi = \varphi \circ s$.
	\item \label{item:Troesch-3.5.5} $d_D(\sigma \cdot y) = \sigma \cdot d_D(y)$ for all $y \in D_n^\ell$, and hence $d_D \circ s = s \circ d_D$.
	\end{enumerate}

The symmetric group action permutes the elements of the set $S := \{ \pm x_1^{b_1} \cdots x_n^{b_n} : 0 \leq b_j < p \}$. If $b_1,\ldots,b_n$ are all distinct, then $x_1^{b_1} \cdots x_n^{b_n}$ and $-x_1^{b_1} \cdots x_n^{b_n}$ are in different $\fS_n$-oribts, while if two of $b_1,\ldots,b_n$ are equal, then $x_1^{b_1} \cdots x_n^{b_n}$ and $-x_1^{b_1} \cdots x_n^{b_n}$ are in the same $\fS_n$-orbit. Let
	\[
	\calO = \{ x_1^{b_1} \cdots x_n^{b_n} : 0 \leq b_1 \leq b_2 \leq \cdots \leq b_n < p\},
	\]
let $\calO_1,\ldots,\calO_t$ be the distinct $\fS_n$-orbits of the elements of $\calO$, and let
	\[
	\calO' = \{ x_1^{b_1} \cdots x_n^{b_n} : 0 \leq b_1 < b_2 < \cdots < b_n < p\}.
	\]
Up to a factor of $\pm 1$, every basis monomial for $D_n$ is conjugate to a unique element of $\calO$.

Let $y \in D_n$. Then $y = \sum_{i=1}^t \sum_{y_{ij} \in \calO_i} \lambda_{ij} \cdot y_{ij}$ for some uniquely determined scalars $\lambda_{ij} \in k$. Set $\lambda_i = \sum_{y_{ij} \in \calO_i} \lambda_{ij}$. Then $\varphi(y) = \sum_{i=1}^t \lambda_i \cdot \varphi(y_i)$, where $y_i$ is the unique element of $\calO \cap \calO_i$. Since the set $\set{ \varphi(z) : z \in \calO'}$ is linearly independent in $C_n$, and since $\varphi(z) = 0$ for all $z \in \calO \backslash \calO'$, then $\varphi(y) = 0$ if and only if $\lambda_i = 0$ for all $y_i \in \calO'$. Next, the linear map $s$ is constant on $\fS_n$-orbits, and $s(y_i) = 0$ if $y_i \in \calO \backslash \calO'$, so
	\[
	s(y) = \sum_{i=1}^t \sum_{y_{ij} \in \calO_i} \lambda_{ij} \cdot s(y_{ij}) = \sum_{i=1}^t \lambda_i \cdot s(y_i) = \sum_{\substack{1 \leq i \leq t \\ y_i \in \calO'}} \lambda_i \cdot s(y_i).
	\]
The elements $\set{s(y_i): y_i \in \calO'}$ are linearly independent, so this implies that $s(y) = 0$ if and only if $\lambda_i = 0$ for all $y_i \in \calO'$. Thus we conclude that
	\begin{enumerate}[resume]
	\item \label{item:Troesch-3.5.4} $\varphi(y) = 0$ if and only if $s(y) = 0$.
	\end{enumerate}
Combined with \eqref{item:Troesch-3.5.5}, this implies:
	\begin{enumerate}[resume]
	\item \label{item:Troesch-3.5.6} If $(\varphi \circ d_D)(y) = 0$, then $(d_D \circ s)(y) = 0$.
	\end{enumerate}

Now we apply the preceding observations to show that $\Hone^\bullet(C_n) = 0$. Let $x \in C_n^\ell$, and suppose that $d_C(x) = 0$. Set $y = (s \circ \psi)(x)$. Then by \eqref{item:Troesch-3.5.2} and \eqref{item:Troesch-3.3.3},
	\[
	\varphi(y) = (\varphi \circ s \circ \psi)(x) = ((\varphi \circ s) \circ \psi) \psi(x) = (\varphi \circ \psi)(x) = x.
	\]
Next, since $d_C(x) = 0$, then $(\varphi \circ d_D \circ \psi)(x) = 0$ by \eqref{item:Troesch-3.3.6}, and hence $d_D(y) = (d_D \circ s \circ \psi)(x) = 0$ by \eqref{item:Troesch-3.5.6}. The complex $D_n^\bullet$ is $p$-acyclic, so there exists $y' \in D_n^{\ell - p + 1}$ such that $(d_D)^{p-1}(y') = y$. Set $x' = \varphi(y') \in C_n^{\ell - p + 1}$. Then by \eqref{item:varphi-chian-map},
	\[
	(d_C)^{p-1}(x') = (d_C)^{p-1} \circ \varphi(y') = \varphi \circ (d_D)^{p-1}(y') = \varphi(y) = x.
	\]
So $x$ is a $(p-1)$-coboundary in $C_n^\ell$. Since $\ell$ was arbitrary, this shows that $\Hone^\bullet(C_n) = 0$.

\subsubsection{Compatibility with morphisms} \label{subsubsec:r=1morphisms}

In Sections \ref{subsubsec:reduction-to-1-dim} and \ref{subsubsec:1-dim-purely-odd} we showed for each $U \in \bsv$ that $\bsb(U)$ is a normal $p$-complex, and we showed that the map $\eta(U): \bss^{(1)}(U) \to \Hstarbul(\bsb(U))$, which was defined in Section \ref{subsubsec:define-eta} in terms of a fixed choice $u_1,\ldots,u_m$ of homogeneous basis for $U$, is an isomorphism of graded superalgebras. In this section we'll show that $\eta(U)$ lifts to an isomorphism of strict polynomial superfunctors $\eta: \bss^{(1)} \to \Hstarbul(\bsb)$. To do this, we must show for each $U,V \in \bsv$, each $\phi \in \bsg \Hom_k(U,V)$, and each $z \in \bss^{(1)}(U)$ that
	\begin{equation} \label{eq:eta-naturality}
	[\eta(V) \circ \bss^{(1)}(\phi)](z) = [\Hstarbul(\bsb)(\phi) \circ \eta(U)](z).
	\end{equation}
Here $\eta(V)$ is defined as in Section \ref{subsubsec:define-eta} in terms of a fixed choice $v_1,\ldots,v_n$ of homogeneous basis for $V$. Since the multiplication morphisms for $\bss^{(1)}$ and $\Hstarbul(\bsb)$ are natural transformations, it suffices to verify \eqref{eq:eta-naturality} as $z$ ranges over a set of generators for the algebra $\bss^{(1)}(U) = S(\Uzero^{(1)}) \otimes \Lambda(\Uone^{(1)})$, i.e., as $z$ ranges over the homogeneous basis $u_1^{(1)},\ldots,u_m^{(1)}$ for $U^{(1)}$. By linearity, it also suffices to verify \eqref{eq:eta-naturality} as $\phi$ ranges over a basis for $\bsg^p \Hom_k(U,V)$. %We will find it convenient not to consider $\Hbul(\bsb)(\phi)$ directly, but to consider the function $\bsb(\phi)$ that induces $\Hbul(\bsb)(\phi)$.

For each $1 \leq i \leq n$ and $1 \leq j \leq m$, let $e_{i,j} \in \Hom_k(U,V)$ be the homogeneous linear map satisfying $e_{i,j}(u_\ell) = \delta_{j,\ell} \cdot v_i$. Then the $e_{i,j}$ form a homogeneous basis for $\Hom_k(U,V)$, and $\bsg^p \Hom_k(U,V)$ admits a basis consisting of all monomials $\prod_{i,j} \gamma_{a_{i,j}}(e_{i,j})$ (the products taken, say, in the lexicographic order) with $a_{i,j} \in \N$, $\sum_{i,j} a_{i,j} = p$, and $a_{i,j} \leq 1$ if $e_{i,j}$ is of odd superdegree. So assume that $\phi = \prod_{i,j} \gamma_{a_{i,j}}(e_{i,j})$ is one of these basis monomials. Let $H$ be the Young subgroup $\prod_{i,j} \fS_{a_{i,j}}$ of $\fS_p$, and let $J \subset \fS_p$ be a set of right coset representatives for $H$. Then as in \cite[IV.5.3]{Bourbaki:2003}, we can write $\phi = \prod_{i,j} \gamma_{a_{i,j}}(e_{i,j})$ in the form
\begin{equation} \label{eq:phiorbitsum} \textstyle
\phi = \sum_{\sigma \in J} \left( \bigotimes_{i,j} [(e_{i,j})^{\otimes a_{i,j}}] \right) \cdot \sigma,
\end{equation}
where the factors in the tensor product are taken in the lexicographic order.

First consider the expression $[\eta(V) \circ \bss^{(1)}(\phi)](u_\ell^{(1)})$. By the definition \eqref{eq:IrVW}, one has $\bss^{(1)}(\phi) = 0$ unless $\phi = \gamma_p(e_{i,j})$ for some indices $i$ and $j$ such that $e_{i,j}$ is of even superdegree. In this case,
	\begin{align*}
	[\eta(V) \circ \bss^{(1)}(\gamma_p(e_{i,j}))](u_\ell^{(1)}) &= [\eta(V) \circ e_{i,j}^{(1)}](u_\ell^{(1)}) \\
	&= \eta(V) ( \delta_{j,\ell} \cdot v_i^{(1)} ) \\
	&= \begin{cases}
	\delta_{j,\ell} \cdot (\sha_0 \otimes v_i)^p & \text{ if  $\ol{u_\ell} = \ol{v_i} = \zero$,} \\
	\delta_{j,\ell} \cdot (\sha_0 \otimes v_i) \cdots (\sha_{p-1} \otimes v_i) & \text{ if $\ol{u_\ell} = \ol{v_i} = \one$.}
	\end{cases}
	\end{align*}
Next consider the expression
	\[
	[\Hstarbul(\bsb)(\phi) \circ \eta(U)](u_\ell^{(1)}) = \begin{cases}
	\Hstarbul(\bsb)(\phi)\left( (\sha_0 \otimes u_\ell)^p \right) & \text{if $\ol{u_\ell} = \zero$,} \\
	\Hstarbul(\bsb)(\phi)\left( (\sha_0 \otimes u_\ell) \cdots (\sha_{p-1} \otimes u_\ell) \right) & \text{if $\ol{u_\ell} = \one$.}
	\end{cases}
	\]
From \eqref{eq:phiorbitsum} it follows that $[\Hstarbul(\bsb)(\phi) \circ \eta(U)](u_\ell^{(1)}) = 0$ unless $a_{i,j} = 0$ for all $j \neq \ell$. So suppose $a_{i,j} = 0$ for all $j \neq \ell$, in which case
	\[ \textstyle
	\phi = \sum_{\sigma \in J}\left( (e_{1,\ell}^{\otimes a_{1,\ell}}) \otimes \cdots \otimes (e_{n,\ell}^{\otimes a_{n,\ell}}) \right) \cdot \sigma.
	\]
Applying the exponential isomorphism for $\bsb$ to the decomposition $V = \bigoplus_{i=1}^n k \cdot v_i$, one gets $\bsb(V) \cong \bsb(k \cdot v_1) \otimes \cdots \otimes \bsb(k \cdot v_n)$. Then $[\bsb(\phi) \circ \eta(U)](u_\ell^{(1)}) \in \bsb_{a_{1,\ell}}(k \cdot v_1) \otimes \cdots \otimes \bsb_{a_{n,\ell}}(k \cdot v_n)$. The differential $d$ commutes with the action of $\bsb(\phi)$ (roughly, because $\rho$ acts on the first factor of $\Sha_1 \otimes U$ but $\phi$ acts on the second), so $[\bsb(\phi) \circ \eta(U)](u_\ell^{(1)})$ is a cocycle in $\bsb(V)$. By Theorem \ref{thm:Kunneth}, this implies that $[\bsb(\phi) \circ \eta(U)](u_\ell^{(1)})$ defines a cohomology class in $\Hstarbul(\bsb_{a_{1,\ell}}(k \cdot v_1)) \otimes \cdots \otimes \Hstarbul(\bsb_{a_{n,\ell}}(k \cdot v_n))$. Suppose for the moment that $0 < a_{i,\ell} < p$ for some $i$. From the analysis of the one-dimensional purely even case in \cite[\S 3]{Troesch:2005}, and the analysis of the one-dimensional purely odd case in Section \ref{subsubsec:1-dim-purely-odd}, we know that $\Hstarbul(\bsb_a(k \cdot v_i)) = 0$ if $0 < a < p$. Thus, $[\bsb(\phi) \circ \eta(U)](u_\ell^{(1)})$ is a $(p-1)$-coboundary in $\bsb(V)$, and hence $[\Hstarbul(\bsb)(\phi) \circ \eta(U)](u_\ell^{(1)}) = 0$ if $0 < a_{i,\ell} < p$ for any $i$. On the other hand, suppose $\phi = e_{i,j}^{\otimes p} = \gamma_p(e_{i,j})$ for some indices $i$ and $j$, in which case $e_{i,j}$ is necessarily even. Then
	\begin{align*}
	[\bsb(\phi) \circ \eta(U)](u_\ell^{(1)}) &=
	\begin{cases}
	\bsb(e_{i,j}^{\otimes p})\left( (\sha_0 \otimes u_\ell)^p \right) & \text{if $\ol{u_\ell} = \zero$,} \\
	\bsb(e_{i,j}^{\otimes p})\left( (\sha_0 \otimes u_\ell) \cdots (\sha_{p-1} \otimes u_\ell) \right) & \text{if $\ol{u_\ell} = \one$.}
	\end{cases} \\
	&= \begin{cases}
	\delta_{j,\ell} \cdot (\sha_0 \otimes v_i)^p & \text{if $\ol{u_\ell} = \zero$,} \\
	\delta_{j,\ell} \cdot (\sha_0 \otimes v_i) \cdots (\sha_{p-1} \otimes v_i) & \text{if $\ol{u_\ell} = \one$.}
	\end{cases}
	\end{align*}
Combining these observations, \eqref{eq:eta-naturality} then follows.

\subsection{The case \texorpdfstring{$r > 1$}{r>1}} \label{subsec:r-greater-1}

Our goal in Section \ref{subsec:r-greater-1} is to prove the $r > 1$ case of Theorem \ref{theorem:B(r)-cohomology}. The proof proceeds in several steps, modeled on the arguments in \cite[\S4]{Troesch:2005}: First, we explain how $\bsb(r)$ admits the structure of a $p$-bicomplex $\bsb^{\bullet,\bullet}$ with horizontal differential $d'$ and vertical differential $d''$. Next, there is no general theory of spectral sequences for $p$-bicomplexes that we know of, but proceeding as if there were, we calculate
	\[
	\text{``$E_2$''} := \Hstarbul(\Hstarbul(\bsb^{\bullet,\bullet},d'),d''),
	\]
i.e., we calculate the cohomology of the $p$-bicomplex $\bsb^{\bullet,\bullet}$ first along rows and then along columns. In particular, we show that $(\bsb^{\bullet,\bullet},d')$ and $(\Hstarbul(\bsb^{\bullet,\bullet},d'),d'')$ are each normal $p$-complexes. Arguing by induction on $r$, we show that ``$E_2$'' is isomorphic as a strict polynomial superfunctor to $\bss^{(r)}$, and that the isomorphism is induced by a family of superalgebra homomorphisms $\eta_r(U): \bss^{(r)}(U) \to \bsb(r)(U)$. Note, however, that $d(r)$ is not an algebra derivation when $r > 1$, so $\Hstarbul(\bsb(r))$ does not automatically inherit the structure of a graded superalgebra. Finally, we look at the contractions $\bsbst$ of the $p$-complex $\bsb(r)$. The row-wise filtration of $\bsb^{\bullet,\bullet}$ induces filtrations on each $\bsbst$, which in turn give rise to (genuine) spectral sequences. Analyzing these, we determine for all $1 \leq s < p$ that $\Hs^n(\bsb(r))$ is isomorphic to the component of total cohomological degree $n$ in  ``$E_2$''. Since this holds independent of $s$, this will imply that $\bsb(r)$ is normal.

%For the rest of Section \ref{subsec:r-greater-1}, let $r > 1$ be fixed.

\subsubsection{Definition of the isomorphism} \label{subsubsec:define-etar}

Fix $U \in \bsv$, and fix a homogeneous basis $u_1,\ldots,u_m$ for $U$. Then $u_1^{(r)},\ldots,u_m^{(r)}$ is a homogeneous basis for $U^{(r)}$. Define a $k$-linear map $U^{(r)} \to \bss(\Sha_r \otimes U)$ on basis elements by
	\begin{equation} \label{eq:etar-on-generators}
	u_i^{(r)} \mapsto \begin{cases}
	(\sha_0 \otimes u_i)^{p^r} & \text{if $u_i \in \Uzero$,} \\
	(\sha_0 \otimes u_i) \cdot (\sha_1 \otimes u_i) \cdots (\sha_{p^r-1} \otimes u_i) & \text{if $u_i \in \Uone$.}
	\end{cases}
	\end{equation}
This linear map uniquely extends to a homomorphism of graded superalgebras
	\[
	\eta_r(U): \bss^{(r)}(U) = S(\Uzero^{(r)}) \otimes \Lambda(\Uone^{(r)}) \to \bss(\Sha_r \otimes U) = \bsb(r)(U),
	\]
where we consider $\Uzero^{(r)}$ to be concentrated in $\Z$-degree $0$, and we consider $\Uone^{(r)}$ to be concentrated in $\Z$-degree $\binom{p^r}{2}$. Using parts \eqref{item:phiderivation} and \eqref{item:phi-on-p-powers} of Lemma \ref{lemma:convolution-components}, it is straightforward to check that the image of $\eta_r(U)$ consists of cocycles in $\bsb(r)(U)$. Then $\eta_r(U)$ induces a map
	\[
	\eta_r(U): \bss^{(r)}(U) = S(\Uzero^{(r)}) \otimes \Lambda(\Uone^{(r)}) \to \Hs^\bullet(\bsb(r)(U)),
	\]
which by abuse of notation we also denote $\eta_r(U)$.

\subsubsection{\texorpdfstring{$\bsb(r)$ as a $p$-bicomplex}{B(r) as a p-bicomplex}}

Recall that a basis for $\Sha_r$ is given by the vectors
	\[
	\sha_i = \sha_{i_0}^{(0)} \otimes \sha_{i_1}^{(1)} \otimes \cdots \otimes \sha_{i_{r-1}}^{(r-1)} \quad (0 \leq i < p^r),
	\]
where $i = i_0 + i_1 p + \cdots + i_{r-1}p^{r-1}$ is the base-$p$ decomposition of $p$. Set $\deg'(\sha_i) = i_1 p + \cdots i_{r-1} p^{r-1}$, and set $\deg''(\sha_i) = i_0$. Then $\deg'$ and $\deg''$ each define (non-negative) $\Z$-gradings on $\Sha_r$, which in turn induce $\Z$-gradings on $\bsb(r) = \bss(\Sha_r \otimes -)$. For each $U \in \bsv$, set
	\[
	\bsb_n^{i,j}(U) = \set{ z \in \bsb_n(r)(U) : \deg'(z) = i \text{ and } \deg''(z) = j},
	\]
and set $\bsb^{i,j} = \bigoplus_{n \in \N} \bsb_n^{i,j}$. Note that $\bsb^{i,j} = 0$ if $p \nmid i$.

Next recall that $d(r) = d_1^0 + d_p^1 + \cdots + d_{p^{r-1}}^{r-1}$. Set $d' = d_1^0+ \cdots + d_{p^{r-2}}^{r-2}$ and set $d'' = d_{p^{r-1}}^{r-1}$. Then $d'$ and $d''$ define even linear maps $d': \bsb^{i,j} \to \bsb^{i+p^{r-1},j}$ and $d'': \bsb^{i,j} \to \bsb^{i,j+p^{r-1}}$. Lemma \ref{lemma:convolution-components} implies that $d' \circ d'' = d'' \circ d'$ and $(d')^p = 0 = (d'')^p$. Then $d'$ and $d''$ endow $\bsb^{\bullet,\bullet}$ with the structure of a $p$-bicomplex such that the total $p$-complex $(\Tot(\bsb^{\bullet,\bullet}),d'+d'')$ is simply $(\bsb(r),d(r))$.

\subsubsection{Calculating \texorpdfstring{``$E_2$''}{"E2"}} \label{subsubsec:E2-page}

In this subsection we calculate $\text{``$E_2$''} := \Hstarbul( \Hstarbul (\bsb^{\bullet,\bullet},d'),d'')$, i.e., we calculate the cohomology of the $p$-bicomplex $\bsb^{\bullet,\bullet}$ first along rows and then along columns. Along the way we show that $(\bsb^{\bullet,\bullet},d')$ and $(\Hstarbul(\bsb^{\bullet,\bullet},d'),d'')$ are each normal $p$-complexes.

Recall that $\Sha_r = \Sha_1^{(0)} \otimes (\Sha_1^{(1)} \otimes \cdots \otimes \Sha_1^{(r-1)}) = \Sha_1 \otimes \Sha_{r-1}^{(1)} \cong \Sha_{r-1}^{(1)} \otimes \Sha_1$. Then for each $U \in \bsv$, one has $\bsb(r)(U) = \bss(\Sha_r \otimes U) \cong \bss(\Sha_{r-1}^{(1)} \otimes (\Sha_1 \otimes U))$. Ignoring the Frobenius twist on $\Sha_{r-1}$, this is equal to $\bsb(r-1)( \Sha_1 \otimes U)$. More precisely,
	\begin{equation} \label{eq:B(r)-restriction-identification}
	\bsb^{pi,\bullet}(U) \cong \bsb(r-1)^i(\Sha_1 \otimes U),
	\end{equation}
and via this identification, the horizontal differential $d': \bsb^{pi,\bullet}(U) \to \bsb^{pi+p^{r-1},\bullet}(U)$ identifies with the differential $d(r-1): \bsb(r-1)^i(\Sha_1 \otimes U) \to \bsb(r-1)^{i+p^{r-2}}(\Sha_1 \otimes U)$ for $\bsb(r-1)$, evaluated on $\Sha_1 \otimes U$. Then, modulo the rescaling of the horizontal $\Z$-degree by a factor of $p$, one has $(\bsb^{\bullet,\bullet},d') \cong (\bsb(r-1),d(r-1)) \circ (\Sha_1 \otimes -)$ as $p$-complexes of strict polynomial super\-functors. Since pre-composition with the functor $\Sha_1 \otimes -$ is an exact operation, this implies by induction on $r$ that $(\bsb^{\bullet,\bullet},d')$ is a normal $p$-complex, and
	\begin{equation} \label{eq:cohomology-along-rows}
	\Hstarbul(\bsb^{\bullet,\bullet},d') \cong \bss^{(r-1)} \circ (\Sha_1 \otimes -)
	\end{equation}
as strict polynomial superfunctors. Specifically, \eqref{eq:cohomology-along-rows} is induced by the family of superalgebra homomorphisms
	\[
	\eta_{r-1}(\Sha_1 \otimes -): \bss^{(r-1)}(\Sha_1 \otimes -) \to \bss(\Sha_{r-1}^{(1)} \otimes (\Sha_1 \otimes -)) \cong \bss(\Sha_r \otimes -)
	\]
that are defined on generators as in \eqref{eq:etar-on-generators}, except that the basis $\sha_0, \sha_1, \ldots, \sha_{p^{r-1}-1}$ for $\Sha_{r-1}$ is replaced by $\sha_0^{(1)},\sha_1^{(1)},\ldots,\sha_{p^{r-1}-1}^{(1)}$. Taking $\{\sha_i \otimes u_j: 0 \leq i < p,\, 1 \leq j \leq m \}$ as our homogeneous basis for $\Sha_1 \otimes U$, the map $\eta_{r-1}(\Sha_1 \otimes U)$ is defined on generators by
	\[
	(\sha_i \otimes u_j)^{(r-1)} \mapsto \begin{cases}
	(\sha_i \otimes \sha_0^{(1)} \otimes u_j)^{p^{r-1}} & \text{if $u_j \in \Uzero$,} \\
	(\sha_i \otimes \sha_0^{(1)} \otimes u_j) \cdot (\sha_i \otimes \sha_1^{(1)} \otimes u_j) \cdots (\sha_i \otimes \sha_{p^{r-1}-1}^{(1)} \otimes u_j) & \text{if $u_j \in \Uone$.}
	\end{cases}
	\]
Note that if $0 \leq i < p$ and $0 \leq \ell < p^{r-1}$, then $\sha_i \otimes \sha_\ell^{(1)} = \sha_{i+p \ell} \in \Sha_r$.

Let $u_{i,j}$ denote the image of $(\sha_i \otimes u_j)^{(r-1)}$ under $\eta_{r-1}(\Sha_1 \otimes U)$. Then $\Hstarbul(\bsb^{\bullet,\bullet}(U),d')$ is spanned by the cohomology classes of monomials of the form $u_{i_1,j_1} \cdots u_{i_n,j_n}$. Since $\deg'(u_{i,j}) \in p \cdot \binom{p^{r-1}}{2} \Z$ and $\deg''(u_{i,j}) \in p^{r-1} \Z$, this implies that
	\begin{equation} \label{eq:E1-page-nonzero} \textstyle
	\Hstar^a(\bsb^{\bullet,b},d') \neq 0 \qquad \text{only if} \qquad a \in p \cdot \binom{p^{r-1}}{2} \Z \quad \text{and} \quad b \in p^{r-1} \Z.
	\end{equation}
Next, as a superspace one has $\bss^{(r-1)}(\Sha_1 \otimes U) = \bss(\Sha_1^{(r-1)} \otimes U^{(r-1)})$. Ignoring the Frobenius twist on $\Sha_1$, this identifies with $\bsb(1)(U^{(r-1)})$. In terms of \eqref{eq:cohomology-along-rows}, the identification becomes
	\begin{equation} \label{eq:cohomology-along-rows-B(1)}
	\Hstarbul(\bsb^{\bullet,p^{r-1}\ell},d') \cong \bsb(1)^\ell \circ \bsi^{(r-1)}.
	\end{equation}
Via this identification, the cohomology class of $u_{i,j}$ corresponds to $\sha_i \otimes u_j^{(r-1)} \in \bss^1(\Sha_1 \otimes U^{(r-1)})$.

Now consider the induced action of the vertical differential $d''$ on $\Hstarbul(\bsb^{\bullet,\bullet},d')$. On terms of polynomial degree $p^{r-1}n$ in $\bsb(r)(U) = \bss(\Sha_r \otimes U)$, $d'' = d_{p^{r-1}}^{r-1}$ is the morphism that corresponds via \eqref{eq:SYoneda} to the morphism
	\[
	\gamma_{p^{r-1}}(\rho_0) \cdot \gamma_{p^{r-1}(n-1)}(1) \in \bsg^{p^{r-1}n} \Hom_k(\Sha_r,\Sha_r).
	\]
Making the identification \eqref{eq:B(r)-restriction-identification}, this is the morphism $\bsb(r-1)(\Sha_1 \otimes U) \to \bsb(r-1)(\Sha_1 \otimes U)$ that arises via functoriality from the morphism
	\begin{equation} \label{eq:B(r-1)-morphism}
	\gamma_{p^{r-1}}(\rho \otimes 1_U) \cdot \gamma_{p^{r-1}(n-1)}(1) \in \bsg^{p^{r-1}n} \Hom_k(\Sha_1 \otimes U,\Sha_1 \otimes U).
	\end{equation}
Passing through the isomorphism \eqref{eq:cohomology-along-rows}, the functorial action of $\bss^{(r-1)} = \bss \circ \bsi^{(r-1)}$ on \eqref{eq:B(r-1)-morphism} is
	\begin{align*}
	\bss^{(r-1)}\left( \gamma_{p^{r-1}}(\rho \otimes 1_U) \cdot \gamma_{p^{r-1}(n-1)}(1) \right) &= \bss \left( \gamma_1((\rho \otimes 1_U)^{(r-1)}) \cdot \gamma_{n-1}(1) \right) \\
	&=\bss \left( \gamma_1(\rho^{(r-1)} \otimes 1_{U^{(r-1)}}) \cdot \gamma_{n-1}(1) \right).
	\end{align*}
Ignoring the Frobenius twist on $\rho$, this is the definition of the differential $d(1)$, applied to terms of polynomial degree $n$ in $\bsb(1)(U^{(r-1)})$. Then we deduce that, via the identification \eqref{eq:cohomology-along-rows-B(1)}, the action of $d''$ on $\Hstarbul(\bsb^{\bullet,\bullet},d')$ corresponds to the differential $d(1)$ on $\bsb(1)$. Then by the case $r=1$ in Section \ref{subsec:r=1}, it follows that $d''$ makes $\Hstarbul(\bsb^{\bullet,\bullet},d')$ into a normal $p$-complex, and
	\begin{equation} \label{eq:cohomology-along-columns}
	\Hstarbul(\Hstarbul(\bsb^{\bullet,\bullet},d'),d'') \cong \Hstarbul(\bsb(1)^\bullet,d(1)) \circ \bsi^{(r-1)} \cong \bss^{(1)} \circ \bsi^{(r-1)} = \bss^{(r)}.
	\end{equation}
Specifically, \eqref{eq:cohomology-along-columns} is induced by the superalgebra homomorphism 
	\[
	\eta_1(U^{(r-1)}) : \bss(U^{(r)}) = \bss^{(1)}(U^{(r-1)}) \to \Hstarbul(\bsb^{\bullet,\bullet},d')
	\]
that is defined on generators by
	\[
	u_j^{(r)} \mapsto \begin{cases}
	\text{the cohomology class of} \quad (u_{0,j})^p & \text{if $u_j \in \Uzero$,} \\
	\text{the cohomology class of} \quad u_{0,j} \cdot u_{1,j} \cdots u_{p-1,j} & \text{if $u_j \in \Uone$.}
	\end{cases}
	\]

If $u_j$ is even, then $(u_{0,j})^p = [(\sha_0 \otimes u_j)^{p^{r-1}}]^p = (\sha_0 \otimes u_j)^{p^r}$ in $\bss(\Sha_r \otimes U)$. If $u_j$ is odd, then $u_{i,j} = (\sha_i \otimes u_j)(\sha_{i+p} \otimes u_j)(\sha_{i+2p} \otimes u_j) \cdots (\sha_{i+p(p^{r-1}-1)} \otimes u_j)$, so the product $u_{0,j} u_{1,j} \cdots u_{p-1,j}$ includes each of the terms $\sha_\ell \otimes u_j$ for $0 \leq \ell < p^r$ exactly once. Since $\sha_\ell \otimes u_j$ and $\sha_{\ell'} \otimes u_j$ anti-commute in $\bss(\Sha_r \otimes U)$, this implies that, up to a sign that depends only the values of $p$ and $r$, the product $u_{0,j} u_{1,j} \cdots u_{p-1,j}$ is equal to $(\sha_0 \otimes u_j) \cdot (\sha_1 \otimes u_j) \cdots (\sha_{p^r-1} \otimes u_j)$. Then it follows that the map $\eta_r(U)$ defined in Section \ref{subsubsec:define-etar} also induces an isomorphism $\bss^{(r)} \cong \Hstarbul( \Hstarbul( \bsb^{\bullet,\bullet},d'),d'')$.

Since the image of $\eta_r(U)$ is concentrated in polynomial degrees divisible by $p^r$, it follows that $\Hstarbul( \Hstarbul( \bsb_n^{\bullet,\bullet},d'),d'') = 0$ if $p^r \nmid n$, and $\Hstarbul( \Hstarbul( \bsb_{p^rn}^{\bullet,\bullet},d'),d'') \cong \bss^{n(r)}$. Since $(u_{0,j})^p$ is in bidegree $(\deg',\deg'') = (0,0)$ and $u_{0,j} u_{1,j} \cdots u_{p-1,j}$ is in bidegree $(\deg',\deg'') = ( p^2 \binom{p^{r-1}}{2}, p^{r-1} \binom{p}{2})$, one sees that the summand $S_0^{n-\ell(r)} \otimes \Lambda_1^{\ell(r)}$ of $\bss^{n(r)}$ occurs in $\Hstar^a(\Hstar^b(\bsb_{p^r n}^{\bullet,\bullet},d'),d'')$, where $a = \ell \cdot p^{r-1} \binom{p}{2}$ and $b = \ell \cdot p^2 \binom{p^{r-1}}{2}$. The total cohomological degree of $S_0^{n-\ell(r)} \otimes \Lambda_1^{\ell(r)}$ is then equal to
	\[ \textstyle
	a+b = \ell \cdot p^{r-1} \binom{p}{2} + \ell \cdot p^2 \binom{p^{r-1}}{2} = \ell \cdot \binom{p^r}{2}.
	\]

\subsubsection{Spectral sequences arising from contracted complexes}

For the rest of this section, set $\bsb = \bsb(r) = \Tot(\bsb^{\bullet,\bullet})$. Our goal in the rest of this section is show for all $1 \leq s < p$ that $\Hs^m(\bsb)$ is equal to the component of total cohomological degree $m$ in $\text{``$E_2$''} := \Hstarbul( \Hstarbul( \bsb^{\bullet,\bullet},d'),d'')$.

Given integers $1 \leq s < p$ and $0 \leq t < (p-s)p^{r-1}$, define $\phi = \phi_{r,s,t}: \N \to \N$ by
	\[
	\phi(\ell) = 
	\begin{cases}
	t+m p^r & \text{if $\ell = 2m$ is even,} \\
	t+sp^{r-1}+mp^r & \text{if $\ell=2m+1$ is odd.}
	\end{cases}
	\]
Then $\bsbst^\ell = \bsb^{\phi(\ell)}$. Let $\bsb = F^0 \bsb \supseteq F^1 \bsb \supseteq F^2 \bsb \supseteq \cdots$ be the decreasing row-wise filtration on $\bsb$, defined by $F^i \bsb^m = \bigoplus_{\ell \geq i} \bsb^{m-\ell,\ell}$. Since $d''(F^i \bsb^m) \subseteq F^{i+1} \bsb^{m+1}$, the differential on the associated graded $p$-complex $\bigoplus_{i \in \N} F^i \bsb^{\bullet} / F^{i+1} \bsb^{\bullet} \cong \bigoplus_{i \in \N} \bsb^{\bullet-i,i}$ identifies with the horizontal differential $d'$ on $\bsb^{\bullet,\bullet}$. The filtration on $\bsb^{\bullet,\bullet}$ induces a decreasing filtration on the contracted complex $\bsbst$, with $F^0 \bsbst = \bsbst$ and $F^\ell \bsbst^m = 0$ if $\ell > \phi(m)$. The filtration is finite in each cohomological degree, so it gives rise to a spectral sequence converging to $\Hbul(\bsbst)$, with
	\[
	E_0^{i,j} = F^i \bsbst^{i+j} / F^{i+1} \bsbst^{i+j} = F^i \bsb^{\phi(i+j)} / F^{i+1} \bsb^{\phi(i+j)} \cong \bsb^{\phi(i+j)-i,i}.
	\]
The differential $d_0: E_0^{i,j} \to E_0^{i,j+1}$ then identifies with $(d')^s$ if $i+j$ is even, or with $(d')^{p-s}$ if $i+j$ is odd. Either way, since $(\bsb^{\bullet,i},d')$ is a normal $p$-complex by Section \ref{subsubsec:E2-page}, we conclude that
	\[
	E_1^{i,j} \cong \Hstar^{\phi(i+j)-i}(\bsb^{\bullet,i},d').
	\]
Using the notation introduced before \eqref{eq:E1-page-nonzero}, $E_1^{i,j}$ is spanned by the classes of the monomials $u_{i_1,j_1} \cdots u_{i_n,j_n}$ such that $\deg'(u_{i_1,j_1} \cdots u_{i_n,j_n}) = \phi(i+j)-i$ and $\deg''(u_{i_1,j_1} \cdots u_{i_n,j_n}) = i$. Since these monomials are all in the kernel of the horizontal differential $d'$, and since the differentials on each page of the spectral sequence are induced by the differential of the underlying complex $\bsbst$, it follows that, from the $E_1$-page onward, the differentials $d_c: E_c^{i,j} \to E_c^{i+c,j+1-c}$ of the spectral sequence are all induced by either $(d'')^s$ (when $i+j$ is even) or $(d'')^{p-s}$ (when $i+j$ is odd).

Let $c \geq 1$ be an integer such that $d_c: E_c^{i,j} \to E_c^{i+c,j+1-c}$ is nontrivial. Then $E_1^{i,j}$ and $E_1^{i+c,j+1-c}$ must both be nonzero, so by \eqref{eq:E1-page-nonzero},
	\[ \textstyle
	\phi(i+j)-i = \ell \cdot p \binom{p^{r-1}}{2} \quad \text{and} \quad \phi(i+j+1)-i-c = \ell' \cdot p \binom{p^{r-1}}{2}
	\]
for some integers $\ell$ and $\ell'$. Since $i$ and $i+c$ must also be multiples of $p^{r-1}$, then $c = \chat p^{r-1}$ for some integer $\chat \geq 1$. First suppose $i+j$ is even. Then
	\begin{align*}
	\textstyle 	\ell' \cdot p \binom{p^{r-1}}{2} &= \phi(i+j+1)-i-c \\
	&= \phi(i+j)+ sp^{r-1} - i - c \\
	&= \textstyle \ell \cdot p \binom{p^{r-1}}{2} + sp^{r-1} - \chat p^{r-1},
	\end{align*}
and hence $s - \chat = (\ell' - \ell) \cdot p \cdot \frac{p^{r-1}-1}{2}$ is a multiple of $p$. (Note that $\frac{p^{r-1}-1}{2}$ is an integer by the standing assumption throughout this paper that $p$ is odd.) Since $1 \leq s < p$, and since $\chat \geq 1$, this implies that the two smallest possible values for $\chat$ are $s$ and $s+p$, and hence the two smallest possible values for $c$ are $s p^{r-1}$ and $s p^{r-1} + p^r$. Next suppose $i+j$ is odd. Then
	\begin{align*}
	\textstyle \ell' \cdot p \binom{p^{r-1}}{2} &= \phi(i+j+1)-i-c \\
	&= \phi(i+j)+ (p-s)p^{r-1} - i - c  \\
	&= \textstyle \ell \cdot p \binom{p^{r-1}}{2} + (p-s)p^{r-1} - \chat p^{r-1},
	\end{align*}
and hence $(p-s)-\chat = (\ell' - \ell) \cdot p \cdot \frac{p^{r-1}-1}{2}$ is a multiple of $p$. Since $1 \leq s < p$, and since $\chat \geq 1$, this implies that the two smallest possible values for $\chat$ are $(p-s)$ and $(p-s)+p$, and hence the two smallest possible values for $c$ are $(p-s)p^{r-1}$ and $(p-s)p^{r-1} + p^r$.

The observations of the previous paragraph imply that for $c$ in the range $1 \leq c \leq p^r$, the only pages $E_c$ of the spectral sequence that have nontrivial differentials are $E_{sp^{r-1}}$ and $E_{(p-s)p^{r-1}}$. On the $E_{sp^{r-1}}$-page, the nontrivial differentials originate at terms $E_{sp^{r-1}}^{i,j}$ with $i+j$ even, while on the $E_{(p-s)p^{r-1}}$-page they originate at terms $E_{(p-s)p^{r-1}}^{i,j}$ with $i+j$ odd. From this information, and from the earlier observation that $d_c$ is induced by either $(d'')^s$ or $(d'')^{p-s}$, we can calculate the $E_{p^r}$-page. First suppose $s < p-s$ and $i+j$ is even. So $E_c = E_1$ for $1 \leq c \leq sp^{r-1}$, $E_c = E_{(p-s)p^{r-1}}$ for $sp^{r-1} < c \leq (p-s)p^{r-1}$, and $E_c = E_{p^r}$ for $(p-s)p^{r-1} < c \leq p^r$. Then $E_{sp^{r-1}+1}^{i,j} = E_{(p-s)p^{r-1}}^{i,j}$ is equal to the kernel of the map
	\[
	E_1^{i,j} = E_{sp^{r-1}}^{i,j} \xrightarrow{d_{sp^{r-1}}} E_{sp^{r-1}}^{i+sp^{r-1},j+1-sp^{r-1}} = E_1^{i+sp^{r-1},j+1-sp^{r-1}},
	\]
which identifies with the map
	\[
	\Hstar^{\phi(i+j)-i}(\bsb^{\bullet,i},d') \to \Hstar^{\phi(i+j)-i}(\bsb^{\bullet,i+sp^{r-1}},d'),
	\]
induced by $(d'')^s$. Similarly,
	\[
	E_{sp^{r-1}+1}^{i-(p-s)p^{r-1},j-1+(p-s)p^{r-1}} = E_{(p-s)p^{r-1}}^{i-(p-s)p^{r-1},j-1+(p-s)p^{r-1}}
	\]
is equal to the cokernel of the map
	\[
	E_1^{i-p^r,j-2+p^r} = E_{sp^{r-1}}^{i-p^r,j-2+p^r} \xrightarrow{d_{sp^{r-1}}} E_{sp^{r-1}}^{i-(p-s)p^{r-1},j-1+(p-s)p^{r-1}} = E_1^{i-(p-s)p^{r-1},j-1+(p-s)p^{r-1}},
	\]
which identifies with the map
	\[
	\Hstar^{\phi(i+j)-i}(\bsb^{\bullet,i-p^r},d') \to \Hstar^{\phi(i+j)-i}(\bsb^{\bullet,i-(p-s)p^{r-1}},d'),
	\]
induced by $(d'')^s$. Finally, $E_{(p-s)p^{r-1}+1}^{i,j} = E_{p^r}^{i,j}$ is equal to the cokernel of the map
	\[
	E_{(p-s)p^{r-1}}^{i-(p-s)p^{r-1},j-1+(p-s)p^{r-1}} \xrightarrow{d_{(p-s)p^{r-1}}} E_{(p-s)p^{r-1}}^{i,j},
	\]
which is induced by $(d'')^{p-s}$. Then we conclude that $E_{p^r}^{i,j} \cong \Hs^i( \Hstar^{\phi(i+j)-i}(\bsb^{\bullet,\bullet},d'),d'')$. However, since $(\Hstarbul(\bsb^{\bullet,\bullet},d'),d'')$ is a normal $p$-complex by Section \ref{subsubsec:E2-page}, we can simply write
	\[
	E_{p^r}^{i,j} \cong \Hstar^i( \Hstar^{\phi(i+j)-i}(\bsb^{\bullet,\bullet},d'),d'').
	\]
This conclusion also holds if either $s > p-s$ or if $i+j$ is odd, via entirely analogous reasoning.

We've now shown for all $i$ and $j$ that $E_{p^r}^{i,j} \cong \Hstar^i( \Hstar^{\phi(i+j)-i}(\bsb^{\bullet,\bullet},d'),d'')$. Then by the observations at the end of Section \ref{subsubsec:E2-page}, $E_{p^r}^{i,j} \neq 0$ only if $i = \ell \cdot p^{r-1} \binom{p}{2}$ and $\phi(i+j)-i = \ell \cdot p^2 \binom{p^{r-1}}{2}$ for some integer $\ell$. Then $\phi(i+j) = \ell \cdot \binom{p^r}{2} = \ell \cdot p^r \cdot \frac{p^r-1}{2}$. Since $1 \leq s < p$ and $0 \leq t < (p-s)p^{r-1}$, it follows that $\phi(i+j) = \phi_{r,s,t}(i+j)$ is equal to a multiple of $p^r$ only if $t = 0$ and $i+j$ is even. Then the $E_{p^r}$-page is concentrated in even total degrees, which implies that there can be no further nontrivial differentials in the spectral sequence, and hence $E_{p^r} = E_\infty$. Moreover, for each total cohomological degree $m$, there is at most one pair of indices $i$ and $j$ such that $i+j=m$ and $E_\infty^{i,j} \neq 0$. Namely, if $\phi(m) = \ell \cdot \binom{p^r}{2}$, then $i = \ell \cdot p^{r-1} \binom{p}{2}$ and $j = m-i$. This implies that $\Hbul(\bsbst)$ is \emph{equal} to the $E_\infty$-page of the spectral sequence, and not just equivalent modulo a filtration.

From the preceding observations we deduce that $\Hbul(\bsbst) = 0$ for $t \neq 0$, and that $\opH^m(\bsbszero) \neq 0$ only if $m$ is even and of the form $m = \ell \cdot (p^r-1) = 2 ( \ell \cdot \frac{p^r-1}{2} )$ for some integer $\ell$. Then by \eqref{eq:C-to-Cst}, we deduce that $\Hs^m(\bsb) \neq 0$ only if $m = \ell \cdot \binom{p^r}{2}$ for some integer $\ell$. For $m$ of this form one has
	\[
	\Hs^{\ell \cdot \binom{p^r}{2}}(\bsb) = \opH^{\ell \cdot (p^r-1)}(\bsbszero) = E_\infty^{i,j} \cong \Hstar^{\ell \cdot p^{r-1}\binom{p}{2}}(\Hstar^{\ell \cdot p^2 \binom{p^{r-1}}{2}}(\bsb^{\bullet,\bullet},d'),d''),
	\]
where $i = \ell \cdot p^{r-1}\binom{p}{2}$ and $j = m - i$. Restricting to the component of a single polynomial degree in $\bsb$, one gets $\Hs^\bullet(\bsb_n) = 0$ if $p^r \nmid n$, and
	\[
	\Hs^{\ell \cdot \binom{p^r}{2}}(\bsb_{p^rn}) \cong \Hstar^{\ell \cdot p^{r-1}\binom{p}{2}}(\Hstar^{\ell \cdot p^2 \binom{p^{r-1}}{2}}(\bsb_{p^rn}^{\bullet,\bullet},d'),d'') \cong S_0^{n-\ell(r)} \otimes \Lambda_1^{\ell(r)},
	\]
the last isomorphism induced by the family of superalgebra homomorphisms defined in Section \ref{subsubsec:define-etar}. Since this holds for all $1 \leq s < p$, this implies that $\bsb = \bsb(r)$ is a normal $p$-complex, and completes the proof of Theorem \ref{theorem:B(r)-cohomology}.

\section{\texorpdfstring{$\Ext_{\bsp}^\bullet(\bsir,\bsir)$}{Ext(Ir,Ir)}} \label{section:Ext(Ir,Ir)}

\subsection{Constructing injective resolutions of \texorpdfstring{$\bsirzero$ and $\bsirone$}{Izero(r) and Ione(r)}} \label{subsection:injective-resolutions}

In this section we describe how the $p$-complex $\bsb(r)$ can be used to construct injective resolutions of $\bsirzero$ and $\bsirone$. First, given $n \in \N$, let $T(\bss^n,r)$ be the contraction $\bsb_{p^rn}(r)_{[1,0]}$ of $\bsb_{p^rn}(r)$. Then
	\[
	T(\bss^n,r)^{2i} = \bsb_{p^rn}(r)^{p^r i} \quad \text{and} \quad T(\bss^n,r)^{2i+1} = \bsb_{p^rn}(r)^{p^r i + p^{r-1}}.
	\]
Write $\partial = \partial_\ell: T(\bss^n,r)^{\ell} \to T(\bss^n,r)^{\ell+1}$ for the differential on $T(\bss^n,r)$. Then $\partial_\ell = d(r)$ if $\ell$ is even, and $\partial_\ell = [d(r)]^{p-1}$ if $\ell$ is odd. The complex $T(\bss^n,r)$ consists of injective objects in $\bsp_{p^r n}$ because each $T(\bss^n,r)^\ell$ is a direct summand of the injective object $\bss^{p^rn}(\Sha_r \otimes -) \in \bsp_{p^r n}$. Since $\bss^{p^r n}(\Sha_r \otimes -)^\ell = 0$ for $\ell > p^r n \cdot (p^r-1)$, then $T(\bss^n,r)^\ell = 0$ for $\ell > 2n \cdot (p^r-1)$. The following is an immediate corollary of Theorem \ref{theorem:B(r)-cohomology}:

\begin{corollary} \label{cor:T(Sn,r)-cohomology}
The cohomology of the cochain complex $T(\bss^n,r)$ is even-isomorphic to $\bss^{n(r)}$, with the summand $S_0^{n-\ell(r)} \otimes \Lambda_1^{\ell(r)}$ of $\bss^{n(r)}$ located in cohomology degree $\ell \cdot (p^r-1)$.
\end{corollary}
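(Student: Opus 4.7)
The plan is to read off the cohomology of $T(\bss^n,r)$ directly from Theorem \ref{theorem:B(r)-cohomology} via the formula \eqref{eq:Cst-to-C}, and then perform a simple divisibility check to see that all the nonzero cohomology lands in even cochain degrees.

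First, the $p$-differential $d(r)$ raises $\Z$-degrees by $\alpha = p^{r-1}$. Since $\bsb_{p^rn}(r)$ is a normal $p$-complex by Theorem \ref{theorem:B(r)-cohomology}, the canonical maps $\opH_{[1]}^\bullet \to \opH_{[p-1]}^\bullet$ are isomorphisms, so both sides equal $\Hstar^\bullet$. Applying \eqref{eq:Cst-to-C} with $s=1$ and $t=0$ then yields
\[
\opH^{2i}(T(\bss^n,r)) \cong \Hstar^{ip^r}(\bsb_{p^rn}(r))
\quad\text{and}\quad
\opH^{2i+1}(T(\bss^n,r)) \cong \Hstar^{ip^r+p^{r-1}}(\bsb_{p^rn}(r)).
\]

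Next, by Theorem \ref{theorem:B(r)-cohomology}, $\Hstar^m(\bsb_{p^rn}(r)) = 0$ unless $m = \ell\binom{p^r}{2} = \ell \cdot p^r(p^r-1)/2$ for some $0 \leq \ell \leq n$, in which case the cohomology is the summand $S_0^{n-\ell(r)} \otimes \Lambda_1^{\ell(r)}$ of $\bss^{n(r)}$. The key (and only) step is a parity/divisibility check matching $m$ to $ip^r$ or to $ip^r+p^{r-1}$. In the even case, $ip^r = \ell \cdot p^r(p^r-1)/2$ forces $i = \ell(p^r-1)/2$, which is an integer because $p$ is odd, and yields cohomological degree $2i = \ell(p^r-1)$. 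In the odd case, $ip^r + p^{r-1} = \ell \cdot p^r(p^r-1)/2$ simplifies to $ip + 1 = \ell p (p^r-1)/2$, which is impossible modulo $p$.

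Therefore all nonzero cohomology of $T(\bss^n,r)$ occurs in even cochain degrees, and the summand $S_0^{n-\ell(r)} \otimes \Lambda_1^{\ell(r)}$ sits in degree $\ell \cdot (p^r-1)$. Since the isomorphisms in Theorem \ref{theorem:B(r)-cohomology} are even, so are the induced isomorphisms here. There is no genuine obstacle: the content is entirely a reindexing argument on top of Theorem \ref{theorem:B(r)-cohomology}, and the only thing to be careful about is the mod-$p$ calculation that rules out odd contributions.
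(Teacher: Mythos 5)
Your proof is correct and follows exactly the route the paper intends (the paper merely labels this an ``immediate corollary'' of Theorem \ref{theorem:B(r)-cohomology} without writing out the details). You correctly apply \eqref{eq:Cst-to-C} with $s=1$, $t=0$, $\alpha = p^{r-1}$, invoke normality to identify $\Hone^\bullet$ with $\Hstarbul$, and the two divisibility checks (the even one showing $2i = \ell(p^r-1)$, the odd one ruled out mod $p$) are both accurate.
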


Now set $T = T(\bsi,r) = T(\bss^1,r)$. By Corollary \ref{cor:T(Sn,r)-cohomology}, $T$ has nonzero cohomology in exactly two degrees: $\opH^0(T) \cong \bsirzero$ and $\opH^{p^r-1}(T) \cong \bsirone$. Set $(\Tbar,\partialbar) = \bspi \circ (T,\partial) \circ \bspi$. Then $\Tbar$ is also a complex of injectives, with $\opH^0(\Tbar) \cong \bsirone$ and $\opH^{p^r-1}(\Tbar) \cong \bsirzero$. Set $K = \ker(\partial_{p^r-1})$. Let $j: K \hookrightarrow T^{p^r-1}$ be the inclusion, and let $\pi: K \twoheadrightarrow \bsirone$ be the projection, whose kernel is $\im(\partial_{p^r-2})$. Let $\iota: \bsirone \hookrightarrow \Tbar^0$ be the inclusion. Now
	\[
	K \xrightarrow{j} T^{p^r-1} \xrightarrow{\partial} T^{p^r} \xrightarrow{\partial} \cdots  \xrightarrow{\partial} T^{2p^r-2} \to 0
	\]
is an acyclic complex (in fact, an injective resolution of $K$), and each term in the complex $\Tbar$ is injective, so the projection $\pi: K \twoheadrightarrow \bsirone$ extends to a commutative diagram
	\begin{equation} \label{eq:chain-map-epsilon-prime}
	\vcenter{\xymatrix{
	K \ar@{^(->}[r]^-{j} \ar@{->>}[d]^{\pi} & T^{p^r-1} \ar@{->}[r]^{\partial} \ar@{->}[d]^{\ve_{p^r-1}'} & T^{p^r} \ar@{->}[r]^-{\partial} \ar@{->}[d]^{\ve_{p^r}'} & \cdots \ar@{->}[r]^-{\partial} & T^{2p^r-2} \ar@{->}[r] \ar@{->}[d]^{\ve_{2p^r-2}'} & 0 \ar@{->}[d] \\
	\bsirone \ar@{^(->}[r]^-{\iota} & \Tbar^0 \ar@{->}[r]^{\partialbar} & \Tbar^1 \ar@{->}[r]^-{\partialbar} & \cdots \ar@{->}[r]^-{\partialbar} & \Tbar^{p^r-1} \ar@{->}[r]^{\partialbar} & \Tbar^{p^r}
	}}
	\end{equation}
in which each $\ve_\ell': T^\ell \to \Tbar^{\ell- p^r+1}$ is an even homomorphism; later in Section \ref{subsec:epsilon-prime} we will give an explicit formula for a choice of $\ve'$ in the case $r=1$. Set $\ve_\ell = (-1)^\ell \cdot \ve_\ell'$, and set $\ve_\ell = 0$ if either $\ell < p^r-1$ or $\ell > 2p^r-2$. Then for all $\ell \in \N$ one has
	\[
	\partialbar_{\ell-p^r+1} \circ \ve_\ell = -\ve_{\ell+1} \circ \partial_\ell : T^\ell \to \Tbar^{\ell-p^r+2}.
	\]

Set $Q = T \oplus \Tbar\subgrp{p^r}$. So for all $\ell \in \N$ one has $Q^\ell = T^\ell \oplus \Tbar^{\ell - p^r}$, where by definition $\Tbar^\ell = 0$ if $\ell < 0$, and $T^\ell = \Tbar^\ell = 0$ if $\ell \geq 2p^r-1$. Now define $\delta = \delta_\ell: Q^\ell \to Q^{\ell+1}$ by
	\[
	\delta_\ell(x,y) = \left( \partial_\ell(x),\ve_\ell(x) + \partialbar_{\ell-p^r}(y) \right)) \quad \text{for $x \in T^\ell$ and $y \in \Tbar^{\ell-p^r}$.}
	\]
We claim that $\delta^2 = 0$, and then that
	\[
	\opH^i(Q,\delta) \cong \begin{cases}
	0 & \text{if $i \notin \set{0, 2p^r-1}$,} \\
	\bsirzero & \text{if $i \in \set{0, 2p^r-1}$.}
	\end{cases}
	\]
Thinking of the elements of $Q^\ell$ as column vectors, one has
	\[
	\delta^2 = \bmat{\partial & 0 \\ \ve & \partialbar} \bmat{\partial & 0 \\ \ve & \partialbar} = \bmat{\partial^2 & 0 \\ \ve \circ \partial + \partialbar \circ \ve & \partialbar^2 } = \bmat{0 & 0 \\ 0 & 0},
	\]
so $\delta$ makes $Q$ into a complex. The claim about $\opH^i(Q,\delta)$ is evident if either $i \leq p^r-2$ or if $i \geq 2p^r$, since then $(Q,\delta)$ is identifies with either $(T,\partial)$ or with a shifted copy of $(\Tbar,\partialbar)$. So it suffices to show that $\opH^i(Q,\delta) = 0$ for $p^r-1 \leq i \leq 2p^r-2$, and to show that $\opH^{2p^r-1}(Q,\delta) \cong \bsirzero$.

First let $i = p^r-1$. Let $x \in Q^{p^r-1} = T^{p^r-1}$, and suppose that $\delta_{p^r-1}(x) = (\partial_{p^r-1}(x),\ve_{p^r-1}(x)) = (0,0)$. Then $x \in \ker(\partial_{p^r-1}) = K$, so $\ve_{p^r-1}(x) = (\ve_{p^r-1} \circ j)(x) = (\iota \circ \pi)(x)$. Since $\iota$ is an injection, this implies that $x \in \ker(\pi) = \im(\partial_{p^r-2})$. Then $\opH^{p^r-1}(Q,\delta) = 0$.

Next let $i = p^r$. Let $(x,y) \in Q^{p^r} = T^{p^r} \oplus \Tbar^0$, and suppose that $\delta_{p^r}(x,y) = (\partial_{p^r}(x), \ve_{p^r}(x) + \partialbar_0(y)) = (0,0)$. Then $x \in \ker(\partial_{p^r}) = \im(\partial_{p^r-1})$, so $x = \partial_{p^r-1}(u)$ for some $u \in T^{p^r-1}$. Then $\partialbar_0(y) = -\ve_{p^r}(x) = -\ve_{p^r}(\partial_{p^r-1}(u)) = \partialbar_0(\ve_{p^r-1}(u))$, so $y- \ve_{p^r-1}(u) \in \ker(\partialbar_0) = \im(\iota) = \im(\iota \circ \pi)$. Let $u' \in K$ such that $(\iota \circ \pi)(u') = y - \ve_{p^r-1}(u)$. Then
	\begin{align*}
	\delta_{p^r-1}(u+j(u')) &= \left( \partial_{p^r-1}(u+j(u')) , \ve_{p^r-1}(u+j(u')) \right) \\
	&= \left( \partial_{p^r-1}(u) + (\partial_{p^r-1} \circ j)(u'), \ve_{p^r-1}(u) + (\ve_{p^r-1} \circ j)(u') \right) \\
	&= \left( x + 0, \ve_{p^r-1}(u) + (\iota \circ \pi)(u')\right) \\
	&= \left( x, \ve_{p^r-1}(u) + y-\ve_{p^r-1}(u) \right) = \left( x,y\right),
	\end{align*}
so $(x,y) \in \im(\delta_{p^r-1})$, and hence $\opH^{p^r}(Q,\delta) = 0$.

Now let $p^r+1 \leq i < 2p^r-2$. Let $(x,y) \in Q^i = T^i \oplus \Tbar^{i-p^r}$, and suppose that $\delta_i(x,y) = (\partial_i(x),\ve_i(x) + \partial_{i-p^r}(y)) = (0,0)$. Then $x \in \ker(\partial_i) = \im(\partial_{i-1})$, so $x = \partial_{i-1}(u)$ for some $u \in T^{i-1}$. Then $\partial_{i-p^r}(y) = -\ve_i(x) = -\ve_i(\partial_{i-1}(u)) = \partialbar_{i-p^r}(\ve_{i-1}(u))$, so $y - \ve_{i-1}(u) \in \ker(\partialbar_{i-p^r}) = \im(\partialbar_{i-p^r-1})$. Let $u' \in \Tbar^{i-p^r-1}$ such that $\partialbar_{i-p^r-1}(u') = y-\ve_{i-1}(u)$. Then
	\begin{align*}
	\delta_{i-1}(u,u') &= \left( \partial_{i-1}(u),\ve_{i-1}(u) + \partialbar_{i-p^r-1}(u') \right) \\
	&= \left( x , \ve_{i-1}(u) + y-\ve_{i-1}(u) \right) = \left( x, y \right),
	\end{align*}
so $(x,y) \in \im(\delta_{i-1})$, and hence $\opH^i(Q,\delta) = 0$.

Finally, let $i = 2p^r-1$. Then $Q^{2p^r-1} = \Tbar^{p^r-1}$, $Q^{2p^r} = \Tbar^{p^r}$, and $\delta_{2p^r-1}$ identifies with $\partialbar_{p^r-1}$, so $\ker(\delta_{2p^r-1}) = \ker(\partialbar_{p^r-1})$. On the other hand, $\delta_{2p^r-2}$ is the map
	\[
	T^{2p^r-2} \oplus \Tbar^{p^r-2} \to \Tbar^{p^r-1}, \quad (x,y) \mapsto \ve_{2p^r-2}(x) + \partialbar_{p^r-2}(y).
	\]
Note that $T^{2p^r-2} = \im(\partial_{2p^r-3})$, so $x = \partial_{2p^r-3}(u)$ for some $u \in T^{2p^r-3}$. Then
	\[
	\ve_{2p^r-2}(x) = \ve_{2p^r-2}(\partial_{2p^r-3}(u)) = -\partialbar_{p^r-2}(\ve_{2p^r-3}(u)),
	\]
and hence $\im(\delta_{2p^r-2}) \subseteq \im(\partialbar_{p^r-2})$. Since $\delta_{2p^r-2}(0,y) = \partialbar_{p^r-2}(y)$, we deduce that $\im(\delta_{2p^r-2}) = \im(\partialbar_{p^r-2})$. Then $\opH^{2p^r-1}(Q,\delta) = \ker(\partialbar_{p^r-1}) / \im(\partialbar_{p^r-2}) \cong \bsirzero$.

We've shown that we can splice together copies of $T$ and $\Tbar\subgrp{p^r}$ to construct a complex $Q$ of injective objects in $\bsp_{p^r}$ whose cohomology is $\bsirzero$ in degrees $0$ and $2p^r-1$, and is zero otherwise. The complex $Q$ can be visualized as follows (the numbers in the top row record the cohomological degrees of the terms in that column):
	\[
	\vcenter{\xymatrix @C=1.2pc @R=.5pc {
	\rotatebox[origin=c]{270}{$0$} & & \rotatebox[origin=c]{270}{$p^r-1$} & \rotatebox[origin=c]{270}{$p^r$} & \rotatebox[origin=c]{270}{$p^r+1$} & & \rotatebox[origin=c]{270}{$2p^r-2$} & \rotatebox[origin=c]{270}{$2p^r-1$} & & \rotatebox[origin=c]{270}{$3p^r-2$} \\
	T^0 \ar@{->}[r]^-{\partial} & \cdots \ar@{->}[r]^-{\partial} & T^{p^r-1} \ar@{->}[r]^-{\partial} \ar@{->}[rdd]^{\ve} & T^{p^r} \ar@{->}[r]^-{\partial} \ar@{->}[rdd]^{\ve} & T^{p^r+1} \ar@{->}[r]^-{\partial} \ar@{->}[rdd]^{\ve} & \mathrlap{\phantom{T}}\cdots \ar@{->}[r]^-{\partial} \ar@{->}[rdd]^{\ve} & T^{2p^r-2} \ar@{->}[rdd]^{\ve} & & \\
	& & & \oplus & \oplus & & \oplus & & & \\
	& & & \Tbar^0 \ar@{->}[r]^-{\partialbar} & \Tbar^1 \ar@{->}[r]^-{\partialbar} & \mathrlap{\phantom{\Tbar}}\cdots \ar@{->}[r]^-{\partialbar} & \Tbar^{p^r-2} \ar@{->}[r]^-{\partialbar} & \Tbar^{p^r-1} \ar@{->}[r]^-{\partialbar} & \cdots \ar@{->}[r]^-{\partialbar} & \Tbar^{2p^r-2}
	}}
	\]
	
Now we'll splice together shifts of $Q$ to construct an injective resolution $J(r)$ of the functor $\bsirzero$. First, as a graded superfunctor, $J(r) = \bigoplus_{n \geq 0} Q\subgrp{2np^r}$. Next, we splice the successive copies of $Q$ together in a manner similar to how we spliced together $T$ and $\Tbar$. Specifically, set $\vebar_\ell = \bspi \circ \ve_\ell \circ \bspi$. Then $Q\subgrp{2np^r}$ is spliced to $Q\subgrp{2(n-1)p^r}$ as shown in the following diagram:
	\[
	\vcenter{\xymatrix @C=1.2pc @R=.5pc {
	\mathrlap{\phantom{T}}\cdots \ar@{->}[r]^-{\partial} \ar@{->}[rdd]^{\ve} & T^{2p^r-2} \ar@{->}[rdd]^{\ve} & & T^0 \ar@{->}[r]^-{\partial} & T^1 \ar@{->}[r]^-{\partial} & \cdots \ar@{->}[r]^-{\partial} & T^{p^r-2} \ar@{->}[r]^-{\partial} & T^{p^r-1} \ar@{->}[r]^-{\partial} \ar@{->}[rdd]^{\ve} & T^{p^r} \ar@{->}[r]^-{\partial} \ar@{->}[rdd]^{\ve} & \cdots \\
	& \oplus & & \oplus & \oplus & & \oplus & & \oplus & \\
	\mathrlap{\phantom{\Tbar}}\cdots \ar@{->}[r]^-{\partialbar} & \Tbar^{p^r-2} \ar@{->}[r]^-{\partialbar} & \Tbar^{p^r-1} \ar@{->}[r]^-{\partialbar} \ar@{->}[ruu]^{\vebar} & \Tbar^{p^r} \ar@{->}[r]^-{\partialbar} \ar@{->}[ruu]^{\vebar} & \Tbar^{p^r+1} \ar@{->}[r]^-{\partialbar} \ar@{->}[ruu]^{\vebar} & \mathrlap{\phantom{\Tbar^{p^r+1}}}\cdots \ar@{->}[r]^-{\partialbar} \ar@{->}[ruu]^{\vebar} & \Tbar^{2p^r-2} \ar@{->}[ruu]^{\vebar} & & \Tbar^0 \ar@{->}[r]^-{\partialbar} & \mathrlap{\phantom{\Tbar^1}}\cdots
	}}
	\]
In other words, the splicing between the terminal segment $\Tbar^{p^r-1} \to \cdots \to \Tbar^{2p^r-2}$ of $Q\subgrp{2(n-1)p^r}$ and the initial segment $T^0 \to \cdots \to T^{p^r-1}$ of $Q\subgrp{2np^r}$ is exactly the same as the maps in the relevant range of the complex $\Qbar := \bspi \circ Q \circ \bspi$.

With the successive copies of $Q\subgrp{2np^r}$ spliced together in the manner described above, the earlier calculations for $Q$ imply that $J(r)$ is exact except in cohomological degree $0$, and hence $J(r)$ is an injective resolution of $\bsirzero$. Set $\Jbar(r) = \bspi \circ J(r) \circ \bspi$. Then $\Jbar(r)$ is an injective resolution of $\bsirone$. Ignoring the differentials, one has, as strict polynomial superfunctors,
	\begin{align}
	J(r) &= \bigoplus_{n \geq 0} \left( T(\bsi,r)\subgrp{2np^r} \oplus \Tbar(\bsi,r)\subgrp{(2n+1)p^r} \right), \quad \text{and} \label{eq:J(r)-decomposition} \\
	\Jbar(r) &= \bigoplus_{n \geq 0} \left( \Tbar(\bsi,r)\subgrp{2np^r} \oplus T(\bsi,r)\subgrp{(2n+1)p^r} \right). \label{eq:Jbar(r)-decomposition}
	\end{align}

\subsection{A formula for \texorpdfstring{$\ve'$}{epsilon'} when \texorpdfstring{$r=1$}{r=1}} \label{subsec:epsilon-prime}

In this section we describe, for the case $r=1$, an explicit chain map $\ve'$ that makes the diagram \eqref{eq:chain-map-epsilon-prime} commute. We do this by constructing a homomorphism at the level of $p$-complexes. With this choice for $\ve'$, the injective resolution $J(1)$ of $\bsionezero$ is made completely explicit. First we set some notation that is valid for any $r \geq 1$.

Set $(\bsbbar(r),\dol(r)) = \bspi \circ (\bsb(r),d(r)) \circ \bspi$, so that $\Tbar(\bsi,r)$ is the contraction $\bsbbar_{p^r}(r)_{[1,0]}$ of $\bsbbar_{p^r}(r)$. Set $\Shabar_r = \Pi(\Sha_r)$, and for $0 \leq i < p^r$, set $\shabar_i = (\sha_i)^\pi \in \Pi(\Sha_r)$. Then
	\[
	\bsbbar_{p^r}(r) = \bspi \circ \bss^{p^r}(\Sha_r \otimes -) \circ (k^{0|1} \otimes -) = \bspi \circ \bss^{p^r}( (\Sha_r \otimes k^{0|1}) \otimes -) = \bspi \circ \bss^{p^r}(\Shabar_r \otimes -).
	\]
From \eqref{eq:SYoneda}, one gets
	\begin{equation} \label{eq:HomB-to-Bbar}
	\begin{split}
	\Hom_{\bsp}(\bsb_{p^r}(r),\bsbbar_{p^r}(r)) &= \Hom_{\bsp}(\bss^{p^r}(\Sha_r \otimes-), \bspi \circ \bss^{p^r}(\Shabar_r \otimes -)) \\
	&\cong \bspi \circ \Hom_{\bsp}(\bss^{p^r}(\Sha_r \otimes-), \bss^{p^r}(\Shabar_r \otimes -)) \\
	&\cong \bspi \circ \bsg^{p^r} \Hom_k(\Sha_r,\Shabar_r).
	\end{split}
	\end{equation}
Observe that since $\Hom_k(\Sha_r,\Shabar_r)$ is a purely odd superspace, $\bsg^{p^r} \Hom_k(\Sha_r,\Shabar_r)$ identifies with $\Lambda^{p^r}(\Hom_k(\Sha_r,\Shabar_r))$. Given $f_1,f_2,\ldots,f_{p^r} \in \Hom_k(\Sha_r,\Shabar_r)$, we may write $f_1 \cdot f_2 \cdots f_{p^r}$ to denote the product $\gamma_1(f_1) \cdot \gamma_1(f_2) \cdots \gamma_1(f_{p^r}) \in \bsg^{p^r} \Hom_k(\Sha_r,\Shabar_r)$. Given integers $0 \leq i ,j < p^r$, let $\alpha_{ij} \in \Hom_k(\Sha_r,\Shabar_r)$ be the `matrix unit' defined by $\alpha_{ij}(\sha_\ell) = \delta_{j\ell} \cdot \shabar_i$. Then $\Hom_{\bsp}(\bsb_{p^r}(r)^j,\bsbbar_{p^r}(r)^i)$ is spanned by all expressions of the form
	\[
	{}^\pi(\alpha_{i_1,j_1} \cdot \alpha_{i_2,j_2} \cdots \alpha_{i_{p^r},j_{p^r}}) \quad \text{such that} \quad i_1 + \cdots + i_{p^r} = i \quad \text{and} \quad j_1 + \cdots + j_{p^r} = j.
	\]
In particular, $\Hom_{\bsp}(\bsb_{p^r}(r)^j,\bsbbar_{p^r}(r)^0)$ is spanned by the expressions ${}^\pi(\alpha_{0,j_1} \cdot \alpha_{0,j_2} \cdots \alpha_{0,j_{p^r}})$ with $j_1,j_2,\ldots,j_{p^r}$ all distinct, so
	\begin{equation} \label{eq:HomBBbarzero} \textstyle
	\Hom_{\bsp}(\bsb_{p^r}(r)^j,\bsbbar_{p^r}(r)^0) = 0 \quad \text{if} \quad j < 0 + 1 + 2+ \cdots + (p^r-1) = \binom{p^r}{2}.
	\end{equation}

Now we describe a particular choice for $\ve'(1)$. Given an integer $0 \leq j < p$, let $\varphi_j \in \Hom_k(\Sha_1,\Shabar_1)$ be defined by $\varphi_j(\sha_\ell) = (-1)^j \cdot \binom{\ell}{j} \cdot \shabar_{\ell-j}$, where by definition $\shabar_i = 0$ if $i < 0$. Of course, the binomial coefficient $\binom{\ell}{j}$ is also equal to zero if $j > \ell$. In terms of matrix units, one has
	\[ \textstyle
	\varphi_j = (-1)^j \cdot \sum_{\ell=j}^{p-1} \binom{\ell}{j} \cdot \alpha_{\ell-j,\ell} = (-1)^j \cdot \sum_{i=0}^{p-1-j} \binom{i+j}{i} \cdot \alpha_{i,i+j}.
	\]
Using Pascal's formula, it is straightforward to check that $\rhobar \circ \varphi_j - \varphi_j \circ \rho = \varphi_{j-1}$, where by definition $\varphi_{-1} = 0$. Now set
	\[
	\ve'(1) = {}^\pi(\varphi_0 \cdot \varphi_1 \cdots \varphi_{p-1}) \in \bspi \circ \bsg^p \Hom_k(\Sha_1,\Shabar_1) \cong \bspi \circ \Lambda^p \Hom_k(\Sha_1,\Shabar_1).
	\]
Since $\varphi_j$ maps $(\Sha_1)^\ell$ into $(\Shabar_1)^{\ell-j}$, it follows that $\ve'(1)$ maps $\bsb_p(1)^\ell$ into $\bsbbar_p(1)^{\ell - \binom{p}{2}}$. In the case $r=1$, composition with $\dol(1)$ and $d(1)$ take the particularly simple forms
	\begin{align*}
	\dol(1) \circ \ve'(1) &= \textstyle \sum_{i=0}^{p-1} {}^\pi(\varphi_0 \cdots \varphi_{i-1} \cdot [\rhobar \circ \varphi_i] \cdot \varphi_{i+1} \cdots \varphi_{p-1}), \quad \text{and} \\
	\ve'(1) \circ d(1) &= \textstyle \sum_{i=0}^{p-1} {}^\pi(\varphi_0 \cdots \varphi_{i-1} \cdot [\varphi_i \circ \rho] \cdot \varphi_{i+1} \cdots \varphi_{p-1}).
	\end{align*}
Then
	\[ \textstyle
	\dol(1) \circ \ve'(1) - \ve'(1) \circ d(1) = \sum_{i=0}^{p-1} {}^\pi(\varphi_0 \cdots \varphi_{i-1} \cdot [\varphi_{i-1}] \cdot \varphi_{i+1} \cdots \varphi_{p-1}) = 0
	\]
because the $i=0$ summand contains $0$ as a factor, and all other summands contain a repeated factor. So $\ve'(1)$ defines an even homomorphism of $p$-complexes $\ve'(1): \bsb_p(1) \to \bsbbar_p(1)\subgrp{\binom{p}{2}}$.

The restriction of $\ve'(1)$ to $\bsb_p(1)^{\binom{p}{2}}$ is the single monomial
	\begin{equation} \label{eq:ve-prime-p-1}
	\ve_{p-1}' := \prescript{\pi}{}{\left[ (-1)^0 \alpha_{0,0} \cdot (-1)^1 \alpha_{0,1} \cdots (-1)^{p-1} \alpha_{0,p-1} \right]}.
	\end{equation}
One gets $\ve_{p-1}' \circ d(1) = 0$ by \eqref{eq:HomBBbarzero}, and hence $\ve_{p-1}'$ also vanishes on  $\im(d(1)^{p-1}) = \im(\partial_{p^r-2}) = \ker(\pi: K \twoheadrightarrow \bsirone)$. On the other hand, given $U \in \bsv$ and $u \in \Uone$, one has
	\begin{equation} \label{eq:ve-prime-ell=pr-choose-2}
	\begin{split}
	\ve_{p-1}' \big((\sha_0 \otimes u) (\sha_1 \otimes u) &\cdots (\sha_{p-1} \otimes u) \big) \\
	&= \prescript{\pi}{}{\left[ (\alpha_{0,0}(\sha_0) \otimes u) (\alpha_{0,1}(\sha_1) \otimes u) \cdots (\alpha_{0,p^r-1}(\sha_{p-1}) \otimes u) \right]} \\
	&= \prescript{\pi}{}{\left[ (\shabar_0 \otimes u)^p \right]} \\
	&= \prescript{\pi}{}{\left[ (\sha_0 \otimes {}^\pi u)^p \right]} \in \bsbbar_p(1)^0(U).
	\end{split}
	\end{equation}
The signs $(-1)^i$ all cancel out at the first equals sign of \eqref{eq:ve-prime-ell=pr-choose-2}, because as each odd linear map $\alpha_{0,i}$ passes over the product $(\sha_0 \otimes u) \cdots (\sha_{i-1} \otimes u)$ of $i$ odd factors, it introduces its own sign of $(-1)^i$. Then from the discussion of Section \ref{subsubsec:define-eta}, it follows that $\ve_{p-1}'$ makes the left-most square of \eqref{eq:chain-map-epsilon-prime} commute, so $\ve'(1)$ restricts to a chain map making \eqref{eq:chain-map-epsilon-prime} commute.

For later reference, the restriction of $\ve'(1)$ to $\bsb_p(1)^{p(p-1)}$ is the single monomial
	\[
	\ve_{2p-2}' := \textstyle \prescript{\pi}{}{\left[ (-1)^0 \binom{p-1}{p-1} \alpha_{p-1,p-1} \cdot (-1)^1 \binom{p-1}{p-2} \alpha_{p-2,p-1} \cdots (-1)^{p-1} \binom{p-1}{0} \alpha_{0,p-1} \right]}.
	\]
Since $\binom{p-1}{i} \equiv (-1)^i \mod p$, this simplifies to
	\begin{equation} \label{eq:ve-prime-2p-2}
	\ve_{2p-2}' = \prescript{\pi}{}{\left[ \alpha_{p-1,p-1} \cdot \alpha_{p-2,p-1} \cdots \alpha_{0,p-1} \right]}.
	\end{equation}
	
\begin{remark}
For any integer $r \geq 1$, one can show that the chain map $\ve'$ in \eqref{eq:chain-map-epsilon-prime} can be realized as the restriction of an even homomorphism of $p$-complexes $\ve'(r) : \bsb_{p^r}(r) \to \bsbbar_{p^r}(r)\subgrp{\binom{p^r}{2}}$. Since the differentials on $\bsb(r)$ and $\bsbbar(r)$ each increase the $\Z$-degree by $p^{r-1}$, and since the objects in \eqref{eq:chain-map-epsilon-prime} correspond to terms in $\bsb_{p^r}(r)$ and $\bsbbar_{p^r}(r)$ of $\Z$-degrees divisible by $p^{r-1}$, one may assume that $\ve'(r)_\ell : \bsb_{p^r}(r)^\ell \to \bsbbar_{p^r}(r)^{\ell - \binom{p^r}{2}}$ is zero if $\ell < \binom{p^r}{2}$ or if $\ell \not\equiv 0 \mod p^{r-1}$. For $\ell = \binom{p^r}{2}$, let
	\[
	\ve'(r)_{\binom{p^r}{2}} = {}^\pi[ (-1)^0 \alpha_{0,0} \cdot (-1)^1 \alpha_{0,1} \cdots (-1)^{p^r-1} \alpha_{0,p^r-1}].
	\]
An argument like that given above for $r=1$ shows that this makes the left-hand square of \eqref{eq:chain-map-epsilon-prime} commute. Next let $\ell \geq \binom{p^r}{2}$, and assume by way of induction that $\ve'(r)_\ell$ has been defined for all $\ell' \leq \ell$, and for $\ell'$ in this range one has $\dol(r) \circ \ve'(r)_{\ell' - p^{r-1}} = \ve'(r)_{\ell'} \circ d(r)$. Now one checks that $\dol(r) \circ \ve'(r)_\ell$ vanishes on the kernel of the differential $d(r) : \bsb_{p^r}(r)^\ell \to \bsb_{p^r}(r)^{\ell + p^{r-1}}$. For $\ell = \binom{p^r}{2}$, this kernel is $K$, and one checks that $(\dol(r) \circ \ve'(r)_\ell)(K) = 0$ by the commutativity of the left-hand square of \eqref{eq:chain-map-epsilon-prime}. For $\ell > \binom{p^r}{2}$, this kernel is $\im(d(r)^{p-1})$, by Theorem \ref{theorem:B(r)-cohomology}, and
	\[
	\dol(r) \circ \ve'(r)_\ell \circ d(r)^{p-1} = \dol(r) \circ \dol(r)^{p-1} \circ \ve'(r)_{\ell - (p-1)p^{r-1}} = 0
	\]
because $\dol(r)^p = 0$. Now since $\bsbbar_{p^r}(r)^{\ell - \binom{p^r}{2}+p^{r-1}}$ is injective, there exists an even homomorphism
	\[
	\ve'(r)_{\ell+p^{r-1}}: \bsb_{p^r}(r)^{\ell+p^{r-1}} \to \bsbbar_{p^r}(r)^{\ell-\binom{p^r}{2} + p^{r-1}}
	\]
such that $\ve'(r)_{\ell+p^{r-1}} \circ d(r) = \dol(r) \circ \ve'(r)_\ell$.
\end{remark}

\subsection{The vector space structure of \texorpdfstring{$\Ext_{\bsp}^\bullet(\bsir,\bsir)$}{ExtP(Ir,Ir)}} \label{subsec:ExtP(Ir,Ir)}

In this section we show how the injective resolution $J(r)$ permits a quick calculation of $\Ext_{\bsp}^\bullet(\bsir,\bsir)$ as a graded superspace.

The decomposition $\bsir = \bsirzero \oplus \bsirone$ of the functor $\bsir$ leads to the matrix ring decomposition
	\begin{equation} \label{eq:matrixring}
	\renewcommand*{\arraystretch}{1.5}
	\Ext_{\bsp}^\bullet(\bsir,\bsir) = \begin{pmatrix}
	\Ext_{\bsp}^\bullet(\bsi_0^{(r)},\bsi_0^{(r)}) & \Ext_{\bsp}^\bullet(\bsi_1^{(r)},\bsi_0^{(r)}) \\
	\Ext_{\bsp}^\bullet(\bsi_0^{(r)},\bsi_1^{(r)}) & \Ext_{\bsp}^\bullet(\bsi_1^{(r)},\bsi_1^{(r)})
	\end{pmatrix}
	\end{equation}
Conjugation by $\bspi$ defines an algebra involution on $\Ext_{\bsp}^\bullet(\bsir,\bsir)$, and via this isomorphism, one has $\Ext_{\bsp}^\bullet(\bsirzero,\bsirzero) \cong \Ext_{\bsp}^\bullet(\bsirone,\bsirone)$ and $\Ext_{\bsp}^\bullet(\bsirone,\bsirzero) \cong \Ext_{\bsp}^\bullet(\bsirzero,\bsirone)$. So it suffices to calculate the terms in the first row of \eqref{eq:matrixring}. For $r \in \N$, let $E_r = \bigoplus_{0 \leq i < p^r} k \subgrp{2i}$, i.e., $E_r$ is the purely even graded superspace that is equal to $k$ in $\Z$-degrees $2i$ for $0 \leq i < p^r$.

\begin{theorem} \label{thm:Ext(Ir,Ir)-vector-space}
Let $r$ be a positive integer. Then, as graded superspaces,
	\begin{align*}
	\Ext_{\bsp}^\bullet(\bsirone,\bsirone) \cong \Ext_{\bsp}^\bullet(\bsirzero,\bsirzero) \cong \Hom_{\bsp}(\bsirzero,J(r)^\bullet) &\cong \bigoplus_{n \geq 0} E_r\subgrp{2np^r}, \\
	\Ext_{\bsp}^\bullet(\bsirzero,\bsirone) \cong \Ext_{\bsp}^\bullet(\bsirone,\bsirzero) \cong \Hom_{\bsp}^\bullet(\bsirone,J(r)^\bullet) &\cong \bigoplus_{n \geq 0} E_r\subgrp{(2n+1)p^r}. %, \\
%	\Ext_{\bsp}^\bullet(\bsirone,\bsirone) \cong \Hom_{\bsp}(\bsirone,\Jbar(r)^\bullet) &\cong \bigoplus_{n \geq 0} E_r\subgrp{2np^r}, \\
%	\Ext_{\bsp}^\bullet(\bsirzero,\bsirone) \cong \Hom_{\bsp}^\bullet(\bsirzero,\Jbar(r)^\bullet) &\cong \bigoplus_{n \geq 0} E_r\subgrp{(2n+1)p^r}.
	\end{align*}
\end{theorem}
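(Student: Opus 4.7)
The plan is to apply the injective resolutions constructed in the previous section: $J(r)$ resolves $\bsirzero$ and $\Jbar(r) = \bspi \circ J(r) \circ \bspi$ resolves $\bsirone$, so $\Ext_{\bsp}^\bullet(F,\bsirzero)$ and $\Ext_{\bsp}^\bullet(F,\bsirone)$ will be computed as the cohomology of the corresponding Hom-complexes. The main claim I will establish is that, for each $F \in \set{\bsirzero,\bsirone}$, the relevant Hom-complex is concentrated in cohomological degrees of a single parity, so its differential (which raises degree by one) vanishes identically and the $\Ext$ group equals the Hom-complex on the nose. Conjugation by $\bspi$ will then pair up the four entries of the matrix ring \eqref{eq:matrixring}.

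The key tool is Yoneda's Lemma \eqref{eq:Yonedalemma}, which gives $\Hom_{\bsp}(F,\bss_V^{p^r}) \cong F^\#(V)$. I would first verify the even isomorphisms $(\bsirzero)^\# \cong \bsirzero$ and $(\bsirone)^\# \cong \bsirone$, which follow from the canonical double-dual isomorphism together with the fact that Frobenius twist commutes with duality for a homogeneous (purely even or purely odd) finite-dimensional superspace. This yields $\bsirzero(\Sha_r) \cong \Sha_r^{(r)}$, purely even and one-dimensional in each $\Z$-degree $p^r i$ for $0 \leq i < p^r$; $\bsirone(\Sha_r) = 0$; $\bsirzero(\Shabar_r) = 0$; and $\bsirone(\Shabar_r) \cong \Shabar_r^{(r)}$, purely odd with the same $\Z$-graded dimensions as $\Sha_r^{(r)}$. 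Combining the decompositions \eqref{eq:J(r)-decomposition} and \eqref{eq:Jbar(r)-decomposition}, the even identification $\Sha_r \otimes \bspi V \cong \Shabar_r \otimes V$ (from the supertwist on the first two factors), and the odd isomorphism $\bspi \circ G \simeq G$ that makes $\Hom_{\bsp}(F,\bspi \circ G)$ a parity-flipped copy of $\Hom_{\bsp}(F,G)$, a direct computation will show that $\Hom_{\bsp}(\bsirzero, T(\bsi,r)^m)$ and $\Hom_{\bsp}(\bsirone, \Tbar(\bsi,r)^m)$ are each purely even of dimension one exactly when $m = 2i$ with $0 \leq i < p^r$, and are zero otherwise; meanwhile $\Hom_{\bsp}(\bsirzero, \Tbar(\bsi,r)^m) = \Hom_{\bsp}(\bsirone, T(\bsi,r)^m) = 0$ for all $m$.

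Summing over the shifted summands in $J(r)$ and $\Jbar(r)$ will then yield $\Hom_{\bsp}(\bsirzero, J(r)^\bullet) \cong \bigoplus_{n \geq 0} E_r\subgrp{2np^r}$, supported in non-negative even cohomological degrees; $\Hom_{\bsp}(\bsirone, J(r)^\bullet) \cong \bigoplus_{n \geq 0} E_r\subgrp{(2n+1)p^r}$, supported in odd degrees (since $p^r$ is odd); and $\Hom_{\bsp}(\bsirone, \Jbar(r)^\bullet) \cong \bigoplus_{n \geq 0} E_r\subgrp{2np^r}$, again supported in even degrees. In each case the Hom-complex occupies only one parity of cohomological degrees, forcing every differential to vanish and identifying the $\Ext$ group with its Hom-complex directly. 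The remaining isomorphisms $\Ext_{\bsp}^\bullet(\bsirone, \bsirone) \cong \Ext_{\bsp}^\bullet(\bsirzero, \bsirzero)$ and $\Ext_{\bsp}^\bullet(\bsirzero, \bsirone) \cong \Ext_{\bsp}^\bullet(\bsirone, \bsirzero)$ come from conjugation by $\bspi$, which is an exact self-equivalence of $\bsp$ sending injectives to injectives and satisfying $\bsirzero^{\bspi} = \bsirone$. The hard part will be the bookkeeping: one must track super-parities through the Yoneda, duality, and $\bspi$-conjugation steps carefully enough to see that the purely odd $\bsirone(\Shabar_r)$ really does contribute a \emph{purely even} Hom-complex, matching the purely even target $E_r\subgrp{\ast}$.
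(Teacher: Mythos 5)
Your proposal is correct and follows essentially the same route as the paper: compute $\Ext_{\bsp}^\bullet(F,\bsi_\ell^{(r)})$ from the complex $\Hom_{\bsp}(F,J(r)^\bullet)$ (or $\Jbar(r)^\bullet$), use Yoneda together with $(\bsi_\ell^{(r)})^\# \cong \bsi_\ell^{(r)}$ to identify the terms with $\bsirzero(\Sha_r) \cong \Sha_r^{(r)}$, $\bsirone(\Shabar_r)$, and the two vanishing cases, and observe that the resulting Hom-complex lives entirely in one parity of cohomological degrees (because $p^r$ is odd), so the differential must vanish. The remaining identifications via conjugation by $\bspi$, and the super-parity bookkeeping showing the purely odd $\bsirone(\Shabar_r)$ becomes the purely even $\bspi(\Shabar_r^{(r)}) \cong \Sha_r^{(r)}$, are exactly as in the paper.
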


\begin{proof}
First consider $\Ext_{\bsp}^\bullet(\bsirzero,\bsirzero)$, which can be computed as the cohomology of the complex $\Hom_{\bsp}(\bsirzero,J(r))$. Ignoring the differential, $J(r)$ is a direct sum of shifts of $T(\bsi,r)$ and $\Tbar(\bsi,r)$, which are contractions of the $p$-complexes $\bsb_{p^r}(r)$ and $\bsbbar_{p^r}(r)$. As a graded superspace,
	\begin{equation} \label{eq:Hom-iso-Shar-twist}
	\Hom_{\bsp}(\bsirzero,\bsb_{p^r}(r)) = \Hom_{\bsp}(\bsirzero,\bss^{p^r}(\Sha_r \otimes -)) \cong (\bsirzero)^\#(\Sha_r) \cong \bsirzero(\Sha_r) = \Sha_r^{(r)}.
	\end{equation}
Since $\Sha_r^{(r)}$ is concentrated in $\Z$-degrees divisible by $p^r$, this implies that
	\begin{equation} \label{eq:Hom-iso-Er}
	\Hom_{\bsp}(\bsirzero,T(\bsi,r)) \cong E_r.
	\end{equation}
So $\Hom_{\bsp}(\bsirzero,T(\bsi,r))$ is concentrated in even cohomological degrees. On the other hand,
	\begin{align*}
	\Hom_{\bsp}(\bsirzero,\bsbbar_{p^r}(r)) &= \Hom_{\bsp}(\bsirzero,\bspi \circ \bss^{p^r}(\Shabar_r \otimes -)) \\
	&\cong \bspi \circ \Hom_{\bsp}(\bsirzero,\bss^{p^r}(\Shabar_r \otimes -)) & \\
	&\cong \bspi \circ (\bsirzero)^\# \left( \Shabar_r \right) \\
	&\cong \bspi \circ \bsirzero \left( \Shabar_r \right) = 0,
	\end{align*}
the last equality holding because $\Shabar_r$ is a purely odd superspace. Then $\Hom_{\bsp}(\bsirzero,\Tbar(\bsi,r)) = 0$. Then $\Hom_{\bsp}(\bsirzero,J(r))$ is concentrated in even cohomological degrees, and hence
	\[
	\Ext_{\bsp}^\bullet(\bsirzero,\bsirzero) = \Hom_{\bsp}(\bsirzero,J(r)) = \bigoplus_{n \geq 0} \Hom_{\bsp}(\bsirzero, T(\bsi,r)\subgrp{2np^r}) \cong \bigoplus_{n \geq 0} E_r\subgrp{2np^r}.
	\]
For $\Ext_{\bsp}^\bullet(\bsirone,\bsirzero)$, one similarly observes that
	\begin{align*}
	\Hom_{\bsp}(\bsirone,\bsb_{p^r}(r)) &= \Hom_{\bsp}(\bsirone,\bss^{p^r}(\Sha_r \otimes -)) \cong (\bsirone)^\#(\Sha_r) \cong \bsirone(\Sha_r) = 0, & \text{and} \\
	\Hom_{\bsp}(\bsirone,\bsbbar_{p^r}(r)) &\cong \bspi \circ \Hom_{\bsp}(\bsirone,\bss^{p^r}(\Shabar_r \otimes -)) \cong \bspi \circ (\bsirone)^\# \left( \Shabar_r \right) \cong \Sha_r^{(r)}.
	\end{align*}
So $\Hom_{\bsp}(\bsirone,T(\bsi,r)) = 0$, and $\Hom_{\bsp}(\bsirone,\Tbar(\bsi,r)) \cong E_r$. Then $\Hom_{\bsp}(\bsirone,J(r))$ is concentrated in odd cohomo\-logical degrees, and
	\begin{align*}
	\Ext_{\bsp}^\bullet(\bsirone,\bsirzero) = \Hom_{\bsp}(\bsirone,J(r)) &= \bigoplus_{n \geq 0} \Hom_{\bsp}(\bsirone, \Tbar(\bsi,r)\subgrp{2(n+1)p^r}) \\
	&\cong \bigoplus_{n \geq 0} E_r\subgrp{(2n+1)p^r}.
\qedhere	\end{align*}
%The arguments for the other isomorphisms in the theorem are entirely similar. Alternatively, they can be deduced from the results calculated already by applying the map in cohomology induced by the conjugation operation $F \mapsto \bspi \circ F \circ \bspi$; see \cite[\S 3.3.3]{Drupieski:2016}.
\end{proof}

Given an integer $0 \leq j < p^r$, let $\bse_r(j): \bsirzero \to J(r)^{2j}$ be the composite morphism
	\begin{equation} \label{eq:e_r(j)-definition}
	\bsirzero \to \bss^{p^r} \to \bss^{p^r}(\Sha_r \otimes -)^{p^r j} = T(\bsi,r)^{2j} \subseteq J(r)^{2j}
	\end{equation}
in which the first arrow is induced by the $p^r$-power map, $u^{(r)} \mapsto u^{p^r}$, and the second arrow is induced by the vector space map $U \to \Sha_r \otimes U$, $u \mapsto \sha_j \otimes u$. For arbitrary $j \in \N$, write $j = j_0 + j_1 p^r$ with $0 \leq j_0 < p^r$ and $j_1 \geq 0$, and then define $\bse_r(j) : \bsirzero \to J(r)^{2j}$ to be the composite
	\[
	\bse_r(j) : \bsirzero \to T(\bsi,r)^{2j_0}\subgrp{2j_1 p^r} \subseteq J(r)^{2j}
	\]
in which the first arrow is simply equal to $\bse_r(j_0)$ when you forget the shift in the cohomological grading. Set $\bserpi(j) = \bspi \circ \bse_r(j) \circ \bspi : \bsirone \to \Jbar(r)^{2j}$.

Next recall the decompositions \eqref{eq:J(r)-decomposition} and \eqref{eq:Jbar(r)-decomposition} of $J(r)$ and $\Jbar(r)$. Let $\bsc_r: \Jbar(r)^\bullet \to J(r)^{\bullet+p^r}$ be the morphism that maps summands of $\Jbar(r)$ identically onto summands of $J(r)$ as follows:
	\[
	\Tbar(\bsi,r)\subgrp{2np^r} \mapsto \Tbar(\bsi,r)\subgrp{(2n+1)p^r} \quad \text{and} \quad 
	T(\bsi,r)\subgrp{(2n+1)p^r} \mapsto T(\bsi,r)\subgrp{2(n+1)p^r}.
	\]
Then $\bsc_r$ is a chain map. Set $\bscrpi = \bspi \circ \bsc_r \circ \bspi : J(r)^\bullet \to \Jbar(r)^{\bullet+p^r}$. For $j = j_0 + j_1 p^r$ as above,
	\begin{equation} \label{eq:erj-factorization-with-c}
	\bse_r(j) = (\bsc_r \circ \bscrpi)^{j_1} \circ \bse_r(j_0) \quad \text{and} \quad \bserpi(j) = (\bscrpi \circ \bsc_r)^{j_1} \circ \bserpi(j_0).
	\end{equation}
The following result now follows immediately from Theorem \ref{thm:Ext(Ir,Ir)-vector-space} by dimension comparison.

\begin{proposition}
Let $r$ be a positive integer.
	\begin{enumerate}
	\item $\set{ \bse_r(j) : j \in \N}$ is a basis for $\Hom_{\bsp}(\bsirzero,J(r)) = \Ext_{\bsp}^\bullet(\bsirzero,\bsirzero)$.
	\item $\set{ \bserpi(j) : j \in \N}$ is a basis for $\Hom_{\bsp}(\bsirone,\Jbar(r)) = \Ext_{\bsp}^\bullet(\bsirone,\bsirone)$.
	\item $\set{\bsc_r \circ \bserpi(j) : j \in \N}$ is a basis for $\Hom_{\bsp}(\bsirone,J(r)) = \Ext_{\bsp}^\bullet(\bsirone,\bsirzero)$.
	\item $\set{\bscrpi \circ \bse_r(j) : j \in \N}$ is a basis for $\Hom_{\bsp}(\bsirzero,\Jbar(r)) = \Ext_{\bsp}^\bullet(\bsirzero,\bsirone)$.
	\end{enumerate}
\end{proposition}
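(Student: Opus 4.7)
The proof is a direct dimension count. Theorem \ref{thm:Ext(Ir,Ir)-vector-space} already pins down each $\Ext$-group as a graded vector space that is at most one-dimensional in every cohomological degree; hence, for each of the four claims, it suffices to exhibit a nonzero cocycle representative in each nonzero cohomological degree, and linear independence is then automatic.

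For claim~(i), the plan is to verify in turn that every $\bse_r(j) \in \Hom_{\bsp}(\bsirzero, J(r)^{2j})$ is a cocycle and that it is nonzero. By construction $\bse_r(j)$ has image in the summand $T(\bsi,r)^{2j_0}\langle 2j_1 p^r\rangle$ of $J(r)^{2j}$, where $j = j_0 + j_1 p^r$ with $0 \leq j_0 < p^r$. The differential of $J(r)$ carries this summand into cohomological degree $2j+1$, which in the decomposition \eqref{eq:J(r)-decomposition} consists only of odd-indexed terms of $T$-shifts and of $\Tbar$-shifts; but the proof of Theorem \ref{thm:Ext(Ir,Ir)-vector-space} shows that $\Hom_{\bsp}(\bsirzero, T(\bsi,r)^{\ell})=0$ for $\ell$ odd (because $\bsb_{p^r}(r)^{\ell}$ then sits in a $\Z$-degree not divisible by $p^r$) and $\Hom_{\bsp}(\bsirzero, \Tbar(\bsi,r)) = 0$. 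Consequently $\Hom_{\bsp}(\bsirzero, J(r)^{2j+1}) = 0$ and $\partial \circ \bse_r(j) = 0$. For nonvanishing, evaluating $\bse_r(j_0)$ on a one-dimensional purely even superspace $k$ produces $(\sha_{j_0})^{p^r} \in \bss^{p^r}(\Sha_r)$, which is nonzero since $\sha_{j_0}$ is even; the factorization \eqref{eq:erj-factorization-with-c} then shows $\bse_r(j)$ is the image of this nonzero element under the chain map $(\bsc_r \circ \bscrpi)^{j_1}$, whose underlying effect on graded functors is merely to identify a summand of $J(r)$ with another summand of $J(r)$, so nonvanishing is preserved. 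Since the $\bse_r(j)$ are distributed one per even cohomological degree, they form a basis of $\Ext_{\bsp}^\bullet(\bsirzero,\bsirzero)$.

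Claim~(ii) is then immediate by applying the exact involution $F \mapsto F^{\bspi}$: this sends $J(r)$ to $\Jbar(r)$, $\bsirzero$ to $\bsirone$, and $\bse_r(j)$ to $\bserpi(j)$, so bases transport to bases. For claims~(iii) and~(iv), the key observation is that $\bsc_r: \Jbar(r)^\bullet \to J(r)^{\bullet+p^r}$ and $\bscrpi: J(r)^\bullet \to \Jbar(r)^{\bullet+p^r}$ are chain maps that act as identifications of underlying graded functors onto summands of their targets; hence composition with them sends nonzero cocycles to nonzero cocycles. The resulting families $\{\bsc_r \circ \bserpi(j)\}_{j \in \N}$ and $\{\bscrpi \circ \bse_r(j)\}_{j \in \N}$ land in cohomological degrees $p^r + 2j$, which by Theorem \ref{thm:Ext(Ir,Ir)-vector-space} are precisely the degrees in which $\Ext_{\bsp}^\bullet(\bsirone,\bsirzero)$ and $\Ext_{\bsp}^\bullet(\bsirzero,\bsirone)$ are one-dimensional, so the dimension count concludes the argument. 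The only point requiring genuine verification — and thus the modest main obstacle — is the nonvanishing check, which in every case reduces to the elementary observation that $(\sha_{j_0})^{p^r}$ is a nonzero element of $\bss^{p^r}(\Sha_r)$.
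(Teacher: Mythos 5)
Your argument is correct and follows the paper's own reasoning: the paper proves this Proposition in one line, deducing it from Theorem \ref{thm:Ext(Ir,Ir)-vector-space} by dimension comparison. You fill in exactly the implicit content of "dimension comparison" — verifying that each exhibited morphism is a nonzero cocycle in a degree where the relevant $\Hom$-group is one-dimensional — so the substance is the same, just made explicit.
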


\begin{remark}
By dimension comparison, one gets for $1 \leq i \leq r$ that the cohomology classes $\bse_r(p^{i-1})$ and $\bsc_r$ defined here are nonzero scalar multiples of the classes denoted $\bse_i'$ and $\bsc_r$ in \cite{Drupieski:2016}. For $j \in \N$, the class $\bse_r(j)$ is a nonzero scalar multiple of the class of the same name in \cite{Drupieski:2019b}.
\end{remark}

\subsection{Multiplicative structure of \texorpdfstring{$\Ext_{\bsp}^\bullet(\bsir,\bsir)$}{ExtP(Ir,Ir)}} \label{subsec:ExtP(Ir,Ir)-multiplication}

In this section we make some observations on the multiplicative structure of the Yoneda algebra $\Ext_{\bsp}^\bullet(\bsir,\bsir)$. Along the way we recall some facts from the classical (non-super) situation about the Yoneda algebra $\Ext_{\calP}^\bullet(I^{(r)},I^{(r)})$, whose multiplicative structure was determined by Friedlander and Suslin \cite[Theorem 4.10]{Friedlander:1997}.

First, from \cite[Theorem 4.7.1]{Drupieski:2016}, it follows for $0 \leq i < r$ that $\bsc_r \circ \bse_r^{\bspi}(p^i) = \pm \bse_r(p^i) \circ \bsc_r$. It was only later in \cite[Theorem 5.3.5]{Drupieski:2019b} that these structure constants were pinned down to $+1$, as a consequence of calculating how the generators of $\Ext_{\bsp}^\bullet(\bsir,\bsir)$ restrict to certain subgroups of $GL_{m|n}$. But now using the resolutions $J(r)$ and $\Jbar(r)$, we can give a much more direct proof:

\begin{proposition}
Let $r,i \in \N$ with $0 \leq i < r$. Then in the cohomology ring $\Ext_{\bsp}^\bullet(\bsir,\bsir)$,
	\[
	\bsc_r \circ \bse_r^{\bspi}(p^i) = \bse_r(p^i) \circ \bsc_r \quad \text{and} \quad \bscrpi \circ \bse_r(p^i) = \bserpi(p^i) \circ \bscrpi
	\]
\end{proposition}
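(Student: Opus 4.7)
The second identity follows from the first by applying $\bspi$-conjugation, which is a ring automorphism of $\Ext_{\bsp}^\bullet(\bsir,\bsir)$ that exchanges $\bse_r(p^i) \leftrightarrow \bserpi(p^i)$ and $\bsc_r \leftrightarrow \bscrpi$, so I will focus on the first identity. By Theorem~\ref{thm:Ext(Ir,Ir)-vector-space}, $\Ext_{\bsp}^{p^r+2p^i}(\bsirone,\bsirzero)$ is one-dimensional, so it suffices to exhibit chain-map lifts $\tilde\alpha: J(r) \to J(r)\subgrp{2p^i}$ of $\bse_r(p^i)$ and $\tilde\beta: \Jbar(r) \to \Jbar(r)\subgrp{2p^i}$ of $\bserpi(p^i)$ satisfying $\tilde\alpha \circ \bsc_r = \bsc_r \circ \tilde\beta$ on the nose as chain maps $\Jbar(r) \to J(r)\subgrp{p^r+2p^i}$. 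Restricting this equality to $\bsirone \hookrightarrow \Jbar(r)^0$ then represents both sides of the claimed identity by the same explicit morphism.

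To construct $\tilde\alpha$, I would take the natural transformation $\mu := (\rho_i)_{p^r} = \bss^{p^r}(\rho_i): \bsb_{p^r}(r) \to \bsb_{p^r}(r)$, where $\rho_i$ is the $\Z$-degree-$p^i$ operator on $\Sha_r$ introduced in Section~\ref{subsec:r-greater-1}. Because $\rho_0,\ldots,\rho_{r-1}$ pairwise commute, Lemma~\ref{lemma:convolution-components}\eqref{item:phi-psi-commute} ensures that $\mu$ commutes with the $p$-differential $d(r) = \sum_s (\rho_{r-1-s})_{p^s}$, raises $\Z$-degree by $p^{r+i}$, and therefore descends to an even chain endomorphism $T(\bsi,r) \to T(\bsi,r)\subgrp{2p^i}$. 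On $\bsirzero \hookrightarrow T^0$, one computes $\mu(u^{(r)}) = \mu\bigl((\sha_0 \otimes u)^{p^r}\bigr) = \bigl(\rho_i(\sha_0) \otimes u\bigr)^{p^r} = (\sha_{p^i} \otimes u)^{p^r}$, which is exactly $\bse_r(p^i)$. Setting $\bar\mu := \bspi \circ \mu \circ \bspi$, I define $\tilde\alpha$ by applying $\mu$ to each $T$-summand of $J(r)$ and $\bar\mu$ to each $\Tbar$-summand, and define $\tilde\beta$ on $\Jbar(r)$ analogously. Because $\bsc_r$ acts as the identity on corresponding $T$- and $\Tbar$-summands (it maps $\Tbar\subgrp{2np^r}$ and $T\subgrp{(2n+1)p^r}$ identically onto their shifted counterparts), the desired equality $\tilde\alpha \circ \bsc_r = \bsc_r \circ \tilde\beta$ is then immediate, \emph{provided} these summand-wise assignments glue to bona fide chain maps on $J(r)$ and $\Jbar(r)$.

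The main obstacle is verifying this gluing, i.e., that $\mu$ and $\bar\mu$ are compatible with the splicing morphism $\ve'(r): \bsb_{p^r}(r) \to \bsbbar_{p^r}(r)\subgrp{\binom{p^r}{2}}$ between adjacent $T$ and $\Tbar$ blocks. Via the Yoneda isomorphism~\eqref{eq:SYoneda}, $\mu$ corresponds to $\gamma_{p^r}(\rho_i) \in \bsg^{p^r}\Hom_k(\Sha_r,\Sha_r)$ and $\ve'(r)$ to an element of $\bspi \circ \Lambda^{p^r}\Hom_k(\Sha_r,\Shabar_r)$, so the identity $\bar\mu \circ \ve'(r) = \ve'(r) \circ \mu$ reduces to a computation in the exterior algebra. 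In the case $r=1$, the commutation relation $\rhobar \circ \varphi_j - \varphi_j \circ \rho = \varphi_{j-1}$ together with the explicit formula $\ve'(1) = {}^\pi(\varphi_0 \varphi_1 \cdots \varphi_{p-1})$ of Section~\ref{subsec:epsilon-prime} makes this a direct check: when one expands $\bar\mu \circ \ve'(1)$ using $\rhobar \circ \varphi_j = \varphi_{j-1} + \varphi_j \circ \rho$, every ``error term'' selecting some $\varphi_{j-1}$ factor forces either a repeated factor (killed by antisymmetry) or $\varphi_{-1} = 0$, leaving only the single surviving term $\ve'(1) \circ \mu$. For general $r$, I would run the analogous argument factor-wise on $\Sha_r = \bigotimes_s \Sha_1^{(s)}$, or alternatively absorb any residual discrepancy into a correction term on $\tilde\alpha$ supported on adjacent summands---available by injectivity of the terms of $J(r)$---turning a homotopy commutativity into strict commutativity without changing the cohomology class.
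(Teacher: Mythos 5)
Your lift $\tilde\alpha$ (acting by $\mu = \bss^{p^r}(\rho_i)$ on $T$-summands and by $\bar\mu$ on $\Tbar$-summands) is precisely the diagonal part of the lift constructed in the paper's proof, so the core idea matches. However, the way you try to complete the construction has a genuine error.

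The claimed exact commutativity $\bar\mu\circ\ve'(1) = \ve'(1)\circ\mu$ is \emph{false}, and the cancellation argument offered for it does not work. Under the Yoneda correspondence, $\bar\mu\circ\ve'(1)$ corresponds to $\prod_j(\rhobar\circ\varphi_j)$ and $\ve'(1)\circ\mu$ to $\prod_j(\varphi_j\circ\rho)$ in $\Lambda^p\Hom_k(\Sha_1,\Shabar_1)$. After expanding with $\rhobar\circ\varphi_j = \varphi_{j-1}+\varphi_j\circ\rho$, the ``error terms'' are not repeated factors: a term that selects $\varphi_{j_0-1}$ (from the $j_0$ slot) and $\varphi_{j_0-1}\circ\rho$ (from the $j_0-1$ slot) contains two \emph{distinct} linear maps, not two copies of one, so antisymmetry does not kill it. One can check this concretely at $p=3$: there $\rhobar\varphi_0 = \alpha_{10}+\alpha_{21}$, $\rhobar\varphi_1 = -\alpha_{11}+\alpha_{22}$, $\rhobar\varphi_2 = \alpha_{12}$, while $\varphi_0\rho = \alpha_{10}+\alpha_{21}$, $\varphi_1\rho = -\alpha_{00}+\alpha_{11}$, $\varphi_2\rho = \alpha_{01}$, and the resulting wedges $\bigwedge_j\rhobar\varphi_j$ and $\bigwedge_j\varphi_j\rho$ are supported on disjoint sets of basis monomials. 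So the purely block-diagonal $\tilde\alpha$ is not a chain map, and the identity $\tilde\alpha\circ\bsc_r = \bsc_r\circ\tilde\beta$ you are trying to deduce cannot be obtained ``on the nose'' this way.

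Your fallback --- absorbing the discrepancy into a correction term using injectivity --- is indeed how the paper proceeds, but you leave the crucial part unaddressed. The correction must live \emph{only} in the lower-left block of the matrix $\beta_\ell = \sm{a_\ell & b_\ell \\ c_\ell & d_\ell}$ (that is, one needs $b_\ell = 0$ with $a_\ell$ and $d_\ell$ exactly the twists by $\bss(\rho_i\otimes 1)$ and $\bss(\rho_i\otimes 1)^{\bspi}$, while $c_\ell$ is arbitrary); this requires an inductive argument which you do not give. And even granting this, you do not observe the key point that makes the answer independent of the correction: the cohomology class $\bse_r(p^i)\circ\bsc_r$ is computed by applying the lift to a vector of the form $\sm{0\\ \ast}$ in $T^{p^r}\oplus\Tbar^0$, so the $c_{p^r}$ entry is multiplied by $0$ and never contributes. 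Without this observation, the claim that the correction term ``does not change the cohomology class'' is unjustified. So: right lift, right general strategy, but the commutativity claim is wrong and the correction-term fallback is missing both its construction and the reason it yields the desired equality.
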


\begin{proof}
Let $\whbse_r(p^i): J(r) \to J(r)\subgrp{2p^i}$ be a chain map lifting $\bse_r(p^i): \bsirzero \to J(r)^{2p^i}$. We will show that $\whbse_r(p^i)$ can be chosen so that $\bsc_r \circ \bse_r^{\bspi}(p^i) = \whbse_r(p^i) \circ \bsc_r \circ \bse_r^{\bspi}(0)$ as morphisms $\bsirone \to J(r)^{p^r + 2p^i}$. To construct $\whbse_r(p^i)$, one must first choose morphisms making the following diagram commute:
	\begin{equation} \label{eq:er(pi)-diagram}
	\vcenter{\xymatrix @R=2pc {
	\bsirzero \ar@{->}[r]^{\bse_r(0)} \ar@{->}[dr]_{\sm{\bse_r(p^i) \\ 0}} & J(r)^0 \ar@{->}[r]^-{\delta} \ar@{->}[d]^{\beta_0} & J(r)^1 \ar@{->}[r]^-{\delta} \ar@{->}[d]^{\beta_1} & \cdots \ar@{->}[r]^-{\delta} & J(r)^{p^r-1} \ar@{->}[r]^-{\delta} \ar@{->}[d]^{\beta_{p^r-1}} & J(r)^{p^r} \ar@{->}[d]^{\beta_{p^r}} \\
	& J(r)^{2p^i} \ar@{->}[r]_-{\delta} & J(r)^{2p^i+1} \ar@{->}[r]_-{\delta} & \cdots \ar@{->}[r]_-{\delta} & J(r)^{p^r + 2p^i - 1} \ar@{->}[r]_-{\delta} & J(r)^{p^r+2p^i}.
	}}
	\end{equation}
Since $p \geq 3$ and $0 \leq i < r$, then $p^r + 2p^i \leq 2p^r -1$ (with equality only if $p=3$ and $r=1$). Then for $0 \leq \ell \leq p^r + 2p^i$, one has $J(r)^\ell = T^\ell \oplus \Tbar^{\ell - p^r}$, where $\Tbar^i = 0$ if $i < 0$, and $T^\ell = 0$ if $\ell > 2p^r-2$ (the latter relevant only if $p=3$ and $r=1$). Then the differentials in \eqref{eq:er(pi)-diagram} are all of the form
	\[
	\delta = \bmat{\partial & 0 \\ \ve & \partialbar} : \left[ \begin{array}{l} T^\ell \\ \Tbar^{\ell - p^r} \end{array} \right] \to \left[ \begin{array}{l} T^{\ell+1} \\ \Tbar^{\ell - p^r+1} \end{array} \right],
	\]
and the unknown morphisms $\beta_0,\beta_1,\ldots,\beta_{p^r}$ are all of the form
	\[
	\beta_\ell = \bmat{a_\ell & b_\ell \\ c_\ell & d_\ell}: \left[ \begin{array}{l} T^\ell \\ \Tbar^{\ell - p^r} \end{array} \right] \to \left[ \begin{array}{l} T^{\ell+2p^i} \\ \Tbar^{\ell - p^r+2p^i} \end{array} \right].
	\]
Arguing by induction, one can show for $0 \leq \ell \leq p^r$ that $\beta_\ell$ can be chosen so that $a_\ell$ is the restriction of the algebra homomorphism $\bss(\rho_i \otimes 1): \bss(\Sha_r \otimes -) \to \bss(\Sha_r \otimes -)$, $b_\ell$ is zero, and $d_\ell$ is the restriction of $\bss(\rho_i \otimes 1)^{\bspi} := \bspi \circ \bss(\rho_i \otimes 1) \circ \bspi$. The fact that $\bss(\rho_i \otimes 1)$ commutes with the $p$-differential $d(r)$ on $\bsb(r)$ follows from Lemma \ref{lemma:convolution-components}\eqref{item:phi-psi-commute}. Now $\whbse_r(p^i) \circ \bsc_r \circ \bse_r^{\bspi}(0) : \bsirone \to J(r)^{p^r+2p^i}$ is the composite morphism
	\[
	\bsirone \xrightarrow{\bse_r^{\bspi}(0)} \Tbar^0 \xrightarrow{\sm{0 \\ \id}} \left[ \begin{array}{l} T^{p^r} \\ \Tbar^0 \end{array} \right] \xrightarrow{\sm{\bss(\rho_i \otimes 1) & 0 \\ c_{p^r} & \bss(\rho_i \otimes 1)^{\bspi}}} \left[ \begin{array}{l} T^{p^r+2p^i} \\ \Tbar^{2p^i} \end{array} \right],
	\]
which is immediately seen to be equal to the composite morphism
	\[
	\bsc_r \circ \bse_r^{\bspi}(p^i): \bsirone \xrightarrow{\bse_r^{\bspi}(p^i)} \Tbar^{2p^i} \xrightarrow{\sm{0 \\ \id}} \left[ \begin{array}{l} T^{p^r+2p^i} \\ \Tbar^{2p^i} \end{array} \right].
	\]
So $\bsc_r \circ \bse_r^{\bspi}(p^i) = \bse_r(p^i) \circ \bsc_r$ in the ring $\Ext_{\bsp}^\bullet(\bsir,\bsir)$. The relation $\bscrpi \circ \bse_r(p^i) = \bserpi(p^i) \circ \bscrpi$ then follows via the conjugation action of $\bspi$.
\end{proof}

Next, recall from Remark \ref{remark:restriction-to-non-super} the exact linear operation $F \mapsto F|_{\calV}$ of restriction from $\bsp$ to the category $\calP$ of strict polynomial functors. The $p$-complex $\bsb_{p^r}(r)$ restricts to the $p$-complex $B_{p^r}(r)$ constructed by Troesch, and the contraction $T(\bsi,r) = \bsb_{p^r}(r)_{[1,0]}$ restricts to the complex $T(I,r)$ of \cite{Touze:2012}, which is an injective resolution in $\calP$ of $I^{(r)}$. The canonical projection map $J(r) \twoheadrightarrow T(\bsi,r)\subgrp{0}$ then restricts to a chain map $J(r)|_{\calV} \to T(I,r)$. The results of \cite[\S4]{Touze:2012} show that
	\[
	E_r \cong \Hom_{\calP}(I^{(r)},T(I,r)^\bullet) = \Ext_{\calP}^\bullet(I^{(r)},I^{(r)}).
	\]
Specifically, for $0 \leq j < p^r$, the space $\Hom_{\calP}(I^{(r)},T(I,r)^{2j})$ is spanned by the morphism $e_r(j): I^{(r)} \to T(I,r)^{2j}$ that is defined in precisely the same manner as in \eqref{eq:e_r(j)-definition}. So for $0 \leq j < p^r$, $\bse_r(j)$ restricts to $e_r(j)$. This immediately implies:

\begin{proposition} \label{prop:restriction-iso-small-degree}
For $0 \leq j < 2p^r$, restriction from $\bsp$ to $\calP$ defines an isomorphism
	\begin{equation} \label{eq:restriction-iso}
	\Ext_{\bsp}^j(\bsirzero,\bsirzero) \cong \Ext_{\calP}^j(I^{(r)},I^{(r)}).
	\end{equation}
\end{proposition}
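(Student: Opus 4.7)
The plan is a direct basis-to-basis comparison, exploiting the fact that in low degrees only the ``lowest'' summand of the superfunctor Ext survives. By Theorem \ref{thm:Ext(Ir,Ir)-vector-space} we have
\[
\Ext_{\bsp}^\bullet(\bsirzero,\bsirzero) \cong \bigoplus_{n \geq 0} E_r\subgrp{2np^r},
\]
and in the range $0 \leq j < 2p^r$ only the $n=0$ summand $E_r$ can contribute. Hence $\Ext_{\bsp}^j(\bsirzero,\bsirzero)$ is one-dimensional for even $j$ with $0 \leq j \leq 2p^r-2$ and zero otherwise in this range. Friedlander and Suslin's classical computation gives $\Ext_{\calP}^\bullet(I^{(r)},I^{(r)}) \cong E_r$, so the source and target of the restriction map \eqref{eq:restriction-iso} have matching dimensions throughout the stated range.

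Next I would identify explicit bases that correspond under restriction. For $0 \leq j < p^r$, the class $\bse_r(j)$ is a basis vector of $\Hom_{\bsp}(\bsirzero, J(r)^{2j}) = \Ext_{\bsp}^{2j}(\bsirzero,\bsirzero)$, and as the paragraph immediately preceding the proposition observes, the canonical chain map $J(r)|_{\calV} \to T(I,r)$ arising from restriction to $\calV$ carries $\bse_r(j)$ to the class $e_r(j)$ that spans $\Ext_{\calP}^{2j}(I^{(r)},I^{(r)})$. This identification is immediate from the parallel definitions of the two classes in \eqref{eq:e_r(j)-definition} via the $p^r$-power map followed by the inclusion $u \mapsto \sha_j \otimes u$. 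Odd-degree cohomology vanishes on both sides, so no further checking is needed there.

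Combining the two observations, restriction sends a basis to a basis in every cohomological degree $j$ in the range $0 \leq j < 2p^r$, and so is an isomorphism. There is no real obstacle in this range; the only subtlety is that the argument breaks down at $j = 2p^r$, where the summand $E_r\subgrp{2p^r}$ of the super-theoretic cohomology has no classical counterpart. Thus the bound $j < 2p^r$ is sharp, which is consistent with the more general observation (from \cite{Drupieski:2016}) that pre-composition with $\bsir$ does not induce an injection on Ext-groups in $\bsp$.
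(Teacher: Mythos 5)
Your proof is correct and follows essentially the same route as the paper: both arguments reduce to the observation (recorded in the paragraph before the proposition) that the canonical chain map $J(r)|_{\calV} \to T(I,r)$ carries the basis class $\bse_r(j)$ to $e_r(j)$ for $0 \leq j < p^r$, and then compare dimensions using Theorem \ref{thm:Ext(Ir,Ir)-vector-space} on the super side and the Friedlander--Suslin/Touz\'e computation $\Ext_{\calP}^\bullet(I^{(r)},I^{(r)}) \cong E_r$ on the classical side.
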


Lemma \ref{lemma:convolution-components} implies for $0 \leq i < r$ that the algebra homomorphism $S(\rho_i \otimes 1): S(\Sha_r \otimes -) \to S(\Sha_r \otimes -)$ defines a map of $p$-complexes $B_{p^r}(r)^{\bullet} \to B_{p^r}(r)^{\bullet+p^r \cdot p^i}$, which then restricts to a chain map $\whe_r(p^i) : T(I,r) \to T(I,r)\subgrp{2p^i}$ lifting the morphism $e_r(p^i): I^{(r)} \to T(I,r)^{2p^i}$. Yoneda products in $\Ext_{\calP}^\bullet(I^{(r)},I^{(r)})$ can be calculated by composing the corresponding lifted chain maps, and in this manner one can deduce the following identities in $\Ext_{\calP}^\bullet(I^{(r)},I^{(r)})$, where by abuse of notation we identify the homomorphism $e_r(j)$ with the corresponding cohomology class:
	\begin{itemize}
	\item The cohomology classes $e_r(p^0),e_r(p^1),\ldots,e_r(p^{r-1})$ commute in $\Ext_{\calP}^\bullet(I^{(r)},I^{(r)})$.
	\item Given $0 \leq j < p^r$, let $j = \sum_{\ell=0}^{r-1} j_\ell \cdot p^\ell$ be its base-$p$ decomposition. Then
		\[
		e_r(j) = e_r(p^0)^{j_0} \cdot e_r(p^1)^{j_1} \cdots e_r(p^{r-1})^{j_{r-1}}.
		\]
	\item For each $0 \leq i < r$, the $p$-fold Yoneda product $e_r(p^i)^p$ is equal to zero.
	\end{itemize}
The restriction map $\Ext_{\bsp}^\bullet(\bsirzero,\bsirzero) \to \Ext_{\calP}^\bullet(I^{(r)},I^{(r)})$ is an algebra homomorphism, so applying Proposition \ref{prop:restriction-iso-small-degree}, one immediately deduces:

\begin{proposition} \label{prop:ExtI0I0-basic-relations}
The following identities hold in $\Ext_{\bsp}^\bullet(\bsirzero,\bsirzero)$, where by abuse of notation we identify the morphism $\bse_r(j)$ with the corresponding cohomology class:
	\begin{enumerate}
	\item The cohomology classes $\bse_r(p^0),\bse_r(p^1),\ldots,\bse_r(p^{r-1})$ commute in $\Ext_{\bsp}^\bullet(\bsirzero,\bsirzero)$.
	\item Given $0 \leq j < p^r$, let $j = \sum_{\ell=0}^{r-1} j_\ell \cdot p^\ell$ be its base-$p$ decomposition. Then
		\[
		\bse_r(j) = \bse_r(p^0)^{j_0} \cdot \bse_r(p^1)^{j_1} \cdots \bse_r(p^{r-1})^{j_{r-1}}.
		\]
	\item \label{item:p-power-zero} For each $0 \leq i \leq r-2$, the $p$-fold Yoneda product $\bse_r(p^i)^p$ is equal to zero.
	\end{enumerate}
Replacing $\bse_r$ with $\bserpi$, the same identities hold in $\Ext_{\bsp}^\bullet(\bsirone,\bsirone)$.
\end{proposition}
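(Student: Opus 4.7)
The plan is to reduce all three identities to the corresponding already-noted identities in $\Ext_{\calP}^\bullet(I^{(r)},I^{(r)})$ via the restriction isomorphism of Proposition \ref{prop:restriction-iso-small-degree}. The key observation is that each relation asserted in the proposition involves cohomology classes whose total degree falls strictly below $2p^r$, which is exactly the range in which restriction from $\bsp$ to $\calP$ is an isomorphism.

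To verify this, first I would check the degrees. For (i), the products $\bse_r(p^i) \cdot \bse_r(p^j)$ and $\bse_r(p^j) \cdot \bse_r(p^i)$ both lie in degree $2p^i + 2p^j \leq 4p^{r-1}$, which is less than $2p^r$ since $p \geq 3$. For (ii), both sides lie in degree $2j$ with $0 \leq j < p^r$, so $2j < 2p^r$. For (iii), the Yoneda power $\bse_r(p^i)^p$ lies in degree $2p^{i+1} \leq 2p^{r-1} < 2p^r$, using the hypothesis $0 \leq i \leq r-2$.

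Next I would observe that the restriction operation $F \mapsto F|_{\calV}$ is exact, preserves Yoneda composition, and sends $\bse_r(j)$ to $e_r(j)$ for each $0 \leq j < p^r$ (since both classes are represented by the same composite formula from \eqref{eq:e_r(j)-definition}, now interpreted in $\calP$). Hence the induced map $\Ext_{\bsp}^\bullet(\bsirzero,\bsirzero) \to \Ext_{\calP}^\bullet(I^{(r)},I^{(r)})$ is a graded algebra homomorphism which is an isomorphism in each degree below $2p^r$ by Proposition \ref{prop:restriction-iso-small-degree}. Applying this isomorphism in the degrees identified above, the three identities transport directly from $\Ext_{\calP}^\bullet(I^{(r)},I^{(r)})$, where they have already been recorded just above the statement, to $\Ext_{\bsp}^\bullet(\bsirzero,\bsirzero)$.

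For the parallel statement in $\Ext_{\bsp}^\bullet(\bsirone,\bsirone)$, I would invoke the conjugation-by-$\bspi$ operation discussed at the end of Section \ref{subsec:definitions}, which defines an algebra isomorphism $\Ext_{\bsp}^\bullet(\bsirzero,\bsirzero) \cong \Ext_{\bsp}^\bullet(\bsirone,\bsirone)$ sending $\bse_r(j)$ to $\bserpi(j)$ by the very definition of $\bserpi(j) = \bspi \circ \bse_r(j) \circ \bspi$. Applying this isomorphism to the three identities just established yields the stated identities for $\bserpi$. The only real obstacle is checking that the cohomology classes in question all sit in degree below $2p^r$; once this bookkeeping is done, no further computation is needed.
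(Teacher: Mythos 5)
Your proposal is correct and matches the paper's own proof exactly: the paper also deduces the three identities by transporting them from $\Ext_{\calP}^\bullet(I^{(r)},I^{(r)})$ via the restriction algebra homomorphism, using Proposition~\ref{prop:restriction-iso-small-degree} in the relevant degrees below $2p^r$, and then obtains the $\bserpi$ version by conjugation by $\bspi$. Your explicit degree bookkeeping is a useful elaboration but the underlying argument is identical.
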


The last assertion of the proposition follows via the conjugation action of $\bspi$.

In part \eqref{item:p-power-zero} of the proposition we get $\bse_r(p^i)^p = 0$ only for $0 \leq i \leq r-2$, because $\bse_r(p^{r-1})^p \in \Ext_{\bsp}^{2p^r}(\bsirzero,\bsirzero)$, and \eqref{eq:restriction-iso} is not an isomorphism in degree $2p^r$. In fact, the discussion preceding \cite[Proposition 4.4.4]{Drupieski:2016} implies the following:

\begin{proposition} \label{prop:bser-pth-power}
$\bse_r(p^{r-1})^p = \mu \cdot \bsc_r \circ \bscrpi$ and $\bserpi(p^{r-1})^p = \mu \cdot \bscrpi \circ \bsc_r$ for some $0 \neq \mu \in k$.
\end{proposition}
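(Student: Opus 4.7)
The plan is to combine a dimension count from Theorem~\ref{thm:Ext(Ir,Ir)-vector-space} with an explicit chain-level lift to determine the scalar $\mu$ and to show that it is nonzero.

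First, Theorem~\ref{thm:Ext(Ir,Ir)-vector-space} gives $\Ext_{\bsp}^\bullet(\bsirzero,\bsirzero) \cong \bigoplus_{n \geq 0} E_r\subgrp{2np^r}$, and in this decomposition only the summand $E_r\subgrp{2p^r}$ contributes to cohomological degree $2p^r$ (via its index-$0$ term), so $\Ext_{\bsp}^{2p^r}(\bsirzero,\bsirzero)$ is one-dimensional. Specializing the factorization \eqref{eq:erj-factorization-with-c} to $j = p^r$ (so $j_0 = 0$ and $j_1 = 1$), and recalling that $\bse_r(0) : \bsirzero \to J(r)^0$ represents the identity of $\Hom_{\bsp}(\bsirzero,\bsirzero)$, we see that $\bse_r(p^r) = \bsc_r \circ \bscrpi$ as cohomology classes, and this class is a basis vector for $\Ext_{\bsp}^{2p^r}(\bsirzero,\bsirzero)$. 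Since $\bse_r(p^{r-1})^p$ also lives in this one-dimensional space, we may write $\bse_r(p^{r-1})^p = \mu \cdot \bsc_r \circ \bscrpi$ for a unique scalar $\mu \in k$. The companion identity $\bserpi(p^{r-1})^p = \mu \cdot \bscrpi \circ \bsc_r$ then follows immediately by applying the conjugation-by-$\bspi$ algebra isomorphism on $\Ext_{\bsp}^\bullet(\bsir,\bsir)$ recalled in Section~\ref{subsec:examples}.

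The remaining task is to prove $\mu \neq 0$. I would lift $\bse_r(p^{r-1})$ to a chain map $\whbse_r(p^{r-1}) : J(r)^\bullet \to J(r)^{\bullet + 2p^{r-1}}$ following the pattern of the preceding proposition: on each $T(\bsi,r)$ summand the lift is the restriction of the algebra endomorphism $\bss(\rho_{r-1} \otimes 1)$, which commutes with $d(r)$ by Lemma~\ref{lemma:convolution-components}\eqref{item:phi-psi-commute}; on each $\Tbar(\bsi,r)$ summand it is the restriction of $\bss(\rho_{r-1} \otimes 1)^{\bspi}$; and the off-diagonal components needed for compatibility with the splicing maps $\ve$ and $\vebar$ are constructed inductively using the injectivity of each term of $J(r)$. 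The Yoneda $p$-th power $\bse_r(p^{r-1})^p$ is then represented by the composition $\whbse_r(p^{r-1})^p \circ \bse_r(0) : \bsirzero \to J(r)^{2p^r}$.

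The core obstacle is that $(\rho_{r-1})^p = 0$, and therefore iterating the pure \emph{diagonal} ``$T(\bsi,r) \to T(\bsi,r)$'' component of $\whbse_r(p^{r-1})$ a total of $p$ times yields zero; this is the mechanism behind the classical Friedlander--Suslin relation $e_r(p^{r-1})^p = 0$ in $\Ext_{\calP}^\bullet(I^{(r)},I^{(r)})$. The nonzero contribution to $\mu$ must therefore originate from paths that leave $T(\bsi,r)$ through the splicing map $\ve$ into $\Tbar(\bsi,r)\subgrp{p^r}$, and then re-enter $T(\bsi,r)\subgrp{2p^r}$ through $\vebar$. Using the explicit formula \eqref{eq:ve-prime-p-1}--\eqref{eq:ve-prime-2p-2} for $\ve'$ when $r=1$ (and the inductive construction for general $r$ given in the remark at the end of Section~\ref{subsec:epsilon-prime}), one tracks a convenient test element such as $(\sha_0 \otimes u)^{p^r}$ for $u \in \Uzero$ through the $p$ successive applications of $\whbse_r(p^{r-1})$ and verifies by a direct computation that its image in $T^0\subgrp{2p^r} \subseteq J(r)^{2p^r}$ is a nonzero multiple of $(\sha_0 \otimes u)^{p^r}$ sitting in that summand. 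This is the concrete computation carried out in the discussion preceding \cite[Proposition~4.4.4]{Drupieski:2016}, which yields $\mu \neq 0$ and completes the proof.
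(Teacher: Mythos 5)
Your dimension count is correct and reduces the proposition to showing $\mu \neq 0$: Theorem~\ref{thm:Ext(Ir,Ir)-vector-space} gives $\dim_k \Ext_{\bsp}^{2p^r}(\bsirzero,\bsirzero) = 1$, with basis $\bse_r(p^r) = \bsc_r \circ \bscrpi$ by \eqref{eq:erj-factorization-with-c}, and conjugation by $\bspi$ yields the companion identity. The gap is in your argument for $\mu \neq 0$. You attribute the chain-level computation in $J(r)$ to ``the discussion preceding \cite[Proposition~4.4.4]{Drupieski:2016},'' but that attribution is incorrect: the resolution $J(r)$ is constructed in the present paper, not in \cite{Drupieski:2016}, and the paper explicitly characterizes the argument in \cite{Drupieski:2016} as ``circuitous, relying on calculating the restriction of $\bse_r(p^{r-1})^p$ to a subgroup of $GL_{m|n}$'' — a completely different method. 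Moreover, the chain-level computation you sketch cannot be completed for $r > 1$: the remark at the end of Section~\ref{subsec:epsilon-prime} constructs $\ve'(r)$ only by an inductive existence argument via injectivity and gives no explicit formula for its higher components, so there is no concrete map through which to track a test element. The paper concedes exactly this point in the remark following Theorem~\ref{theorem:Ext(Ir,Ir)-multiplicative}, which identifies part~\eqref{item:pth-power} with $r > 1$ as the only statement lacking a standalone proof in the paper.

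The paper's own proof of Proposition~\ref{prop:bser-pth-power} for general $r$ is, in fact, nothing more than the citation to \cite{Drupieski:2016}; the direct chain-level computation is carried out only for $r = 1$, in the proposition that immediately follows. There the argument is organized quite differently from your sketch: rather than iterating a lift of $\bse_1(1)$ a total of $p$ times (which, as you correctly observe, annihilates the diagonal $T \to T$ component since $\rho^p = 0$, forcing all the content into hard-to-manage compositions of off-diagonal corrections), the paper writes $\bse_1(1)^p = \bse_1(p-1) \circ \bse_1(1)$, lifts $\bse_1(p-1)$ to a chain map whose diagonal component is $\bss(\rho^{p-1}\otimes 1)\neq 0$, and composes \emph{once} with $\bse_1(1)$. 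The splicing components $\beta_1$ and $\beta_{2,0}$ are then computed explicitly from $\ve_{2p-2}'$ and $\vebar_{p-1}$ using the closed formulas \eqref{eq:ve-prime-p-1} and \eqref{eq:ve-prime-2p-2}, producing the scalar $(-1)^{p(p-1)/2}$ in a single manageable calculation. If you want a self-contained proof along the chain-level lines you envision, this reorganization is the step you are missing, and even then it only covers $r=1$.
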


The full justification of Proposition \ref{prop:bser-pth-power} as given in \cite{Drupieski:2016} is circuitous, relying on calculating the restriction of $\bse_r(p^{r-1})^p$ to a subgroup of $GL_{m|n}$. But for $r=1$, we can now use the injective resolution $J(1)$, constructed using the map $\ve'(1)$ of Section \ref{subsec:epsilon-prime}, to give a direct proof.

\begin{proposition}
$\bse_1(1)^p = (-1)^{p(p-1)/2} \cdot \bsc_1 \circ \bsc_1^{\bspi}$ in $\Ext_{\bsp}^\bullet(\bsi^{(1)},\bsi^{(1)})$.
\end{proposition}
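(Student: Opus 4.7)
The plan is to compute $\bse_1(1)^p$ directly at the chain level of the injective resolution $J(1)$, using the explicit splicing map $\ve'(1)$ from Section \ref{subsec:epsilon-prime}, and to match the result to $\bsc_1 \circ \bsc_1^{\bspi}$ with the sign $(-1)^{p(p-1)/2}$ emerging from a Koszul reordering at the top of the computation.

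I first lift $\bse_1(1) : \bsionezero \to J(1)^2$ to a chain map $\whbse_1(1) : J(1) \to J(1)\subgrp{2}$. Since $\rho_0$ raises the $\Z$-degree of $\Sha_1$ by $1$, the algebra endomorphism $a := \bss(\rho_0 \otimes 1)$ of $\bss^p(\Sha_1 \otimes -)$ raises $\Z$-degrees by $p$ on the polynomial-degree $p$ piece (Lemma \ref{lemma:convolution-components}\eqref{item:raisesZdegree}), and commutes with the $p$-differential $d(1) = (\rho_0 \otimes 1)_1$ by Lemma \ref{lemma:convolution-components}\eqref{item:phi-psi-commute}; hence $a$ restricts to a chain map $T \to T\subgrp{2}$ lifting $\bse_1(1)|_T$. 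On the $\Tbar$-summands take $d := \bss(\rhobar \otimes 1)^{\bspi}$. The remaining cross components of $\whbse_1(1)$, both within a single $Q$-copy and between successive copies $Q\subgrp{2np}$ of $J(1)$, are constructed inductively from the chain-map identities and the injectivity of the targets, following the same pattern as the proof of the previous proposition.

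Next, iterate $p$ times. Because $\rho_0^p = 0$ on $\Sha_1$, one has $a^p = \bss(\rho_0^p \otimes 1) = 0$ and similarly $d^p = 0$, so the diagonal $T$-to-$T$ and $\Tbar$-to-$\Tbar$ components of $\whbse_1(1)^p$ vanish. Therefore $\whbse_1(1)^p \circ \bse_1(0)$ is entirely supported on cross and splicing terms. Applied to the chain representative $\bse_1(0)(u^{(1)}) = (\sha_0 \otimes u)^p \in T^0$ (with $u$ even), the diagonal $a^i$ transports it to $(\sha_i \otimes u)^p \in T^{2i}$ for $i \leq p-1$; at the top, the cross component applied to $(\sha_{p-1} \otimes u)^p \in T^{2p-2}$ is governed by the explicit formula \eqref{eq:ve-prime-2p-2} for $\ve_{2p-2}' = {}^\pi[\alpha_{p-1,p-1} \cdot \alpha_{p-2,p-1} \cdots \alpha_{0,p-1}]$, producing $ {}^\pi[(\shabar_{p-1} \otimes u)(\shabar_{p-2} \otimes u) \cdots (\shabar_0 \otimes u)]$. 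Reordering this product of $p$ odd elements into the standard order contributes the Koszul sign $(-1)^{p(p-1)/2}$. The identity $\rhobar \circ \varphi_j - \varphi_j \circ \rho_0 = \varphi_{j-1}$ proved in Section \ref{subsec:epsilon-prime} makes the remaining cross-term sum telescope, so the final expression is a single explicit cocycle in $J(1)^{2p}$ carrying the scalar $(-1)^{p(p-1)/2}$.

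Finally, the resulting cocycle must be matched with $\bsc_1 \circ \bsc_1^{\bspi} \circ \bse_1(0)$, whose chain representative is $(\sha_0 \otimes u)^p \in T^0$ inside the $n = 1$ copy of $Q$. An explicit coboundary in $J(1)^{2p-1}$, built from the $\vebar$-splicing connecting the $n = 0$ and $n = 1$ copies of $Q$, realizes the cohomology between the two representatives and yields the asserted identity $\bse_1(1)^p = (-1)^{p(p-1)/2} \bsc_1 \circ \bsc_1^{\bspi}$. The main obstacle is the careful bookkeeping of signs through the iteration: the factors $(-1)^j$ in $\varphi_j$, the Fermat-style reduction $\binom{p-1}{i} \equiv (-1)^i \bmod p$, the anticommutation of the odd matrix units $\alpha_{i,j} \in \Hom(\Sha_1, \Shabar_1)$, and the parity shift under conjugation by $\bspi$ all contribute, and the net outcome must be reconciled against the Koszul sign of reversing an ordered product of $p$ odd elements.
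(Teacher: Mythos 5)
Your overall strategy (compute $\bse_1(1)^p$ at the chain level of $J(1)$ using the explicit splicing map $\ve'(1)$) is in the right spirit, but it diverges from the paper's argument in a way that makes it considerably heavier, and the sketch has a genuine gap. The paper does not iterate a lift of $\bse_1(1)$ $p$ times: it first invokes the already-established identity $\bse_1(1)^{p-1} = \bse_1(p-1)$ (Proposition \ref{prop:ExtI0I0-basic-relations}, which comes from the classical computation via the restriction functor $\bsp \to \calP$ and is valid precisely because $2(p-1) < 2p$), lifts $\bse_1(p-1)$ to a chain map $\whbse_1(p-1): J(1) \to J(1)\subgrp{2(p-1)}$, and only needs the components $\beta_0,\beta_1,\beta_{2,0}$ in cohomological degrees $0,1,2$ before composing with $\bse_1(1): \bsionezero \to J(1)^2$. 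Your route would require building $\whbse_1(1)$ through degree $2p-2$, computing an iterated matrix product of cross terms, and tracking all intermediate signs.

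The concrete gap is in the treatment of the ``cross component.'' The chain map $\whbse_1(1)$ has a $T^{2p-2} \to \Tbar^p$ entry (degree-raising by $2$), but $\ve_{2p-2}'$ is a component of the \emph{differential} $\delta$ of $J(1)$, mapping $T^{2p-2} \to \Tbar^{p-1}$ (degree-raising by $1$). These are different maps with different targets, and one cannot substitute $\ve_{2p-2}'$ for the cross component; the latter must instead be solved for from the chain-map equation $\ve a + \partialbar c = c\partial + d\ve$, which you do not do. Moreover, the supposed coboundary correction in degree $2p-1$ cannot contribute anything: $\Hom_{\bsp}(\bsionezero, J(1)^{2p-1}) = 0$, since $\Hom_{\bsp}(\bsionezero,\Tbar^\bullet) = 0$ and the $T$-summands contribute only in even cohomological degrees (Theorem \ref{thm:Ext(Ir,Ir)-vector-space}). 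In fact, for the same reason the morphism component $\bsionezero \to \Tbar^p \subseteq J(1)^{2p}$ is forced to vanish, so a computation that outputs a nonzero element of $\Tbar^p(U)$ is not reconcilable with the structure of the complex; the final answer must already lie in the $T^0\subgrp{2p}$-summand, which is exactly where the paper's computation lands via $\beta_{2,0} = (-1)^{p(p-1)/2}\cdot\gamma_p(\kappa_{0,1})$. The identity $\rhobar\circ\varphi_j - \varphi_j\circ\rho = \varphi_{j-1}$ alone does not produce the asserted telescoping of cross terms, because it addresses the commutator of $\ve'$ with the differentials, not with the diagonal entries $\bss(\rho\otimes 1)$ of $\whbse_1(1)$; the two facts are related but not interchangeable.
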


\begin{proof}
From \eqref{eq:erj-factorization-with-c} we have the equality of morphisms $\bse_1(p) = \bsc_1 \circ \bsc_1^{\bspi} \circ \bse_1(0) : \bsi_0^{(1)} \to J(1)^{2p}$, and we know by Proposition \ref{prop:ExtI0I0-basic-relations} that $\bse_1(1)^{p-1} = \bse_1(p-1)$ as classes in $\Ext_{\bsp}^\bullet(\bsi^{(1)},\bsi^{(1)})$. Note that $\bse_1(0)$  corresponds to the identity element of the ring $\Ext_{\bsp}^\bullet(\bsionezero,\bsionezero)$; it lifts to the identity map of complexes $J(1) \to J(1)$. Let $\whbse_1(p-1) : J(1) \to J(1)\subgrp{p-1}$ be a chain map lifting the morphism $\bse_1(p-1) : \bsi_0^{(1)} \to J(1)^{2(p-1)}$. To prove that $\bse_1(1)^p = (-1)^{p(p-1)/2} \cdot \bsc_1 \circ \bsc_1^{\bspi}$, we'll show that $\whbse_1(p-1)$ can be chosen so that $\whbse_1(p-1) \circ \bse_1(1) = (-1)^{p(p-1)/2} \cdot \bse_1(p)$ as morphisms $\bsionezero \to J(1)^{2p}$.

To begin to construct the chain map $\whbse_1(p-1)$, one must choose homomorphisms that make the following diagram commute:
	\[
	\vcenter{\xymatrix @C=5.5pc @R=2pc {
	\bsionezero \ar@{->}[r]^{\bse_1(0)} \ar@{->}[dr]_{\sm{\bse_1(p-1) \\ 0}} & T^0 \ar@{->}[r]^-{\partial_0} \ar@{->}[d]^{\sm{\beta_0 \\ 0}} & T^1 \ar@{->}[r]^{\partial_1} \ar@{->}[d]^{\beta_1} & T^2 \ar@{->}[d]^{\sm{\beta_{2,0}, & \beta_{2,p}}} \\
	& T^{2p-2} \oplus \Tbar^{p-2} \ar@{->}[r]_{\sm{\ve_{2p-2}, & \partialbar_{p-2}}} & \Tbar^{p-1} \ar@{->}[r]_{\sm{\vebar_{p-1} \\ \partialbar_{p-1}}} & T^0 \oplus \Tbar^p
	}}
	\]
First, $\beta_0$ is a morphism $\beta_0: \bsb_p(1)^0 \to \bsb_p(1)^{2p-2}$ such that $\beta_0 \circ \bse_1(0) = \bse_1(p-1)$. For this we can take the restriction of the superalgebra homomorphism $\bss(\rho^{p-1} \otimes 1) : \bss(\Sha_1 \otimes -) \to \bss(\Sha_1 \otimes -)$. Equivalently, $\beta_0$ corresponds as in \eqref{eq:SYoneda} to the morphism $\gamma_p(\rho^{p-1}) \in \bsg^p \Hom_k(\Sha_1,\Sha_1)$.

Next, $\beta_1$ is a morphism $\beta_1: \bsb_p(1)^1 \to \bsbbar_p(1)^{\binom{p}{2}}$ such that $\ve_{2p-2} \circ \beta_0 = \beta_1 \circ \partial = \beta_1 \circ d(1)$. Given a matrix unit $\alpha_{i,j}$ as in Section \ref{subsec:epsilon-prime}, one has $\alpha_{i,j} \circ \rho = \alpha_{i,j-1}$ and $\rhobar \circ \alpha_{i,j} = \alpha_{i+1,j}$, where by convention the matrix unit is zero if either $j-1 < 0$ or if $i+1 \geq p$. Then from \eqref{eq:ve-prime-2p-2} we get
	\[
	\ve_{2p-2} \circ \beta_0 = {}^\pi \left[ \alpha_{p-1,p-1} \cdot \alpha_{p-2,p-1} \cdots \alpha_{0,p-1} \right] \circ \gamma_p(\rho^{p-1}) = {}^\pi \left[ \alpha_{p-1,0} \cdot \alpha_{p-2,0} \cdots \alpha_{0,0} \right],
	\]
so under the correspondence \eqref{eq:HomB-to-Bbar} we can take $\beta_1 = {}^\pi \left[ \alpha_{p-1,0} \cdot \alpha_{p-2,0} \cdots \alpha_{1,0} \cdot \alpha_{0,1} \right]$. Given indices $0 \leq i,j < p$, let $\alphabar_{i,j} \in \Hom_k(\Shabar_1,\Sha_1)$ and $\kappa_{i,j} \in \Hom_k(\Sha_1,\Sha_1)$ be defined by $\alphabar_{i,j}(\shabar_\ell) = \delta_{j,\ell} \cdot \sha_i$ and $\kappa_{i,j}(\sha_\ell) = \delta_{j,\ell} \cdot \sha_i$, respectively. Then from \eqref{eq:ve-prime-p-1} we get
	\begin{align*}
	\vebar_{p-1} = (-1)^{p(p-1)/2} \cdot {}^\pi \left[ \alphabar_{0,0} \cdot  \alphabar_{0,1} \cdots \alphabar_{0,p-1} \right] = {}^\pi \left[ \alphabar_{0,p-1} \cdots \alphabar_{0,1} \cdot \alphabar_{0,0} \right],
	\end{align*}
and composing morphisms in $\bsg^p \bsv$ one gets
	\begin{align*}
	\vebar_{p-1} \circ \beta_1 &= \left( \alphabar_{0,p-1} \cdots \alphabar_{0,1} \cdot \alphabar_{0,0} \right) \circ \left( \alpha_{p-1,0} \cdots \alpha_{1,0} \cdot \alpha_{0,1} \right) \\
	&= (-1)^{0+1+\cdots+(p-1)} \cdot (\alphabar_{0,p-1} \circ \alpha_{p-1,0}) \cdots (\alphabar_{0,1} \circ \alpha_{1,0}) \cdot (\alphabar_{0,0} \circ \alpha_{0,1}) \\
	&= (-1)^{p(p-1)/2} \cdot \gamma_1(\kappa_{0,0}) \cdots \gamma_1(\kappa_{0,0}) \cdot \gamma_1(\kappa_{0,1}) \\
	&= (-1)^{p(p-1)/2} \cdot \gamma_1(\kappa_{0,0})^{p-1} \cdot \gamma_1(\kappa_{0,1}) \in \bsg^p \Hom_k(\Sha_1,\Sha_1).
	\end{align*}
The factor $(-1)^{0+1+\cdots+(p-1)}$ at the second equals sign arises because in the process of composing morphisms, each odd linear map $\alphabar_{0,i}$ passes over $(i-1)$ many other odd linear maps.

Finally, take $\beta_{2,0} = (-1)^{p(p-1)/2} \cdot \gamma_p(\kappa_{0,1}) \in \bsg^p \Hom_k(\Sha_1,\Sha_1)$. Arguing by induction, one can show for each $1 \leq i < p$ that $\gamma_p(\kappa_{0,1}) \circ d(1)^i = \gamma_1(\kappa_{0,0})^i \cdot \gamma_{p-i}(\kappa_{0,1})$. Then
	\[
	\beta_{2,0} \circ \partial_1 = \beta_{2,0} \circ d(1)^{p-1} = (-1)^{p(p-1)/2} \cdot \gamma_1(\kappa_{0,0})^{p-1} \cdot \gamma_1(\kappa_{0,1}) = \vebar_{p-1} \circ \beta_1,
	\]
as desired. Now consider the composite morphism $\whbse_1(p-1) \circ \bse_1(1): \bsionezero \to T^0 \oplus \Tbar^p$. Note that
	\[
	\Hom_{\bsp}(\bsionezero,\Tbar^p) \subseteq \Hom_{\bsp}(\bsionezero,\bsbbar_p(1)) \cong \bsionezero(\Shabar_1) = 0,
	\]
so we only need to consider the composite $\beta_{2,0} \circ \bse_1(1): \bsionezero \to T^0$, which one immediately verifies is equal to $(-1)^{p(p-1)/2} \cdot \bse_1(p)$.
\end{proof}

The following theorem summarizes the multiplicative structure of the ring $\Ext_{\bsp}^\bullet(\bsir,\bsir)$:

\begin{theorem} \label{theorem:Ext(Ir,Ir)-multiplicative}
The ring $\Ext_{\bsp}^\bullet(\bsir,\bsir)$ is generated as a $k$-algebra by the (even superdegree) extension classes
\[
\left.
\begin{aligned}
\bse_r(p^i) &\in \Ext_{\bsp}^{2p^i}(\bsi_0^{(r)},\bsi_0^{(r)}) \\
\bserpi(p^i) &\in \Ext_{\bsp}^{2p^i}(\bsi_1^{(r)},\bsi_1^{(r)})
\end{aligned}
\right\} \text{ for $0 \leq i < r$, and }
\left\{
\begin{aligned}
\bsc_r &\in \Ext_{\bsp}^{p^r}(\bsi_1^{(r)},\bsi_0^{(r)}), \\
\bscrpi &\in \Ext_{\bsp}^{p^r}(\bsi_0^{(r)},\bsi_1^{(r)}),
\end{aligned}
\right.
\]
subject only to the relations imposed by the matrix ring decomposition \eqref{eq:matrixring} and:
	\begin{enumerate}
	\item For each $0 \leq i < r$, $\bse_r(p^i) \circ \bsc_r = \bsc_r \circ \bserpi(p^i)$ and $\bserpi(p^i) \circ \bscrpi = \bscrpi \circ \bse_r(p^i)$.

	\item For each $0 \leq i \leq r -2$, $\bse_r(p^i)^p = [\bserpi(p^i)]^p = 0$.

	\item The subalgebras $\Ext_{\bsp}^\bullet(\bsirzero,\bsirzero)$ and $\Ext_{\bsp}^\bullet(\bsirone,\bsirone)$ are commutative.

	\item \label{item:pth-power} $\bse_r(p^{r-1})^p = \mu \cdot \bsc_r \circ \bscrpi$ and $[\bserpi(p^{r-1})]^p = \mu \cdot \bscrpi \circ \bsc_r$ for some common nonzero scalar $\mu$.
	\end{enumerate}
\end{theorem}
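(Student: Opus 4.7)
The plan is to observe first that all four of the stated relations have been proved earlier in the paper: relation (1) is the proposition immediately preceding Theorem \ref{theorem:Ext(Ir,Ir)-multiplicative}, relations (2) and (3) appear in Proposition \ref{prop:ExtI0I0-basic-relations}, and relation (4) is Proposition \ref{prop:bser-pth-power}. What remains is therefore to establish two things: that the listed elements do generate the algebra, and that no additional relations (beyond those already stated together with the matrix-ring decomposition \eqref{eq:matrixring}) are needed.

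For generation, I would invoke the basis for $\Ext_{\bsp}^\bullet(\bsir,\bsir)$ exhibited in Section \ref{subsec:ExtP(Ir,Ir)}, consisting of the cohomology classes $\bse_r(j)$, $\bserpi(j)$, $\bsc_r \circ \bserpi(j)$, and $\bscrpi \circ \bse_r(j)$ for $j \in \N$. Writing $j = j_0 + j_1 p^r$ with $0 \leq j_0 < p^r$, the factorization \eqref{eq:erj-factorization-with-c} gives $\bse_r(j) = (\bsc_r \circ \bscrpi)^{j_1} \circ \bse_r(j_0)$, and the base-$p$ expansion in Proposition \ref{prop:ExtI0I0-basic-relations}(2) further writes $\bse_r(j_0)$ as a product of the generators $\bse_r(p^i)$. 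The remaining three types of basis element are handled symmetrically.

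For completeness I would let $A$ denote the abstract $k$-algebra presented by the listed generators subject to the matrix-ring constraints \eqref{eq:matrixring} and relations (1)--(4), and study the natural surjection $A \twoheadrightarrow \Ext_{\bsp}^\bullet(\bsir,\bsir)$. The core step is to bring every element of the $(\zero,\zero)$-corner of $A$ into the normal form
\[
\bse_r(p^0)^{a_0} \bse_r(p^1)^{a_1} \cdots \bse_r(p^{r-1})^{a_{r-1}} \cdot (\bsc_r \circ \bscrpi)^{b} \quad \text{with } 0 \leq a_i < p \text{ and } b \geq 0,
\]
using the relations as follows: relation (3) reorders the commuting factors; relation (1) applied twice shows that $\bsc_r \circ \bscrpi$ commutes with each $\bse_r(p^i)$ and can therefore be pushed to the right; relation (2) kills $\bse_r(p^i)^p$ for $i \leq r-2$; and relation (4) rewrites $\bse_r(p^{r-1})^p$ as $\mu \cdot \bsc_r \circ \bscrpi$, reducing $a_{r-1}$ into the range $0 \leq a_{r-1} < p$ and absorbing excess into the exponent $b$. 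Under the surjection these normal-form monomials map, via $(a_0,\ldots,a_{r-1},b) \mapsto j = \sum_i a_i p^i + b p^r$, bijectively onto the basis $\set{\bse_r(j) : j \in \N}$ of Theorem \ref{thm:Ext(Ir,Ir)-vector-space}, so the surjection restricted to this corner is an isomorphism. The other three corners are handled identically.

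The genuinely hard input---the cohomological computation behind relation (4)---has already been carried out using the injective resolution $J(r)$ and the explicit chain map $\ve'(1)$ constructed in Section \ref{subsec:epsilon-prime}, so the obstacle in the present argument is essentially bookkeeping: one must confirm that the normal form above covers the $(\zero,\zero)$-corner of $A$ without duplication, and that the dimension comparison with Theorem \ref{thm:Ext(Ir,Ir)-vector-space} really does force the surjection to be an isomorphism in every $\Z$-degree and in every corner of the matrix ring.
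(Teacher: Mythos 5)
Your proposal is correct and takes essentially the approach the paper intends: the paper states Theorem \ref{theorem:Ext(Ir,Ir)-multiplicative} as a summary of the preceding propositions (with the remark immediately afterward noting that only part (4) for $r > 1$ lacks a standalone proof in the paper, being deferred to \cite{Drupieski:2016}), and it leaves the generation and ``no further relations'' claims implicit, to be read off from the basis $\set{\bse_r(j)}$, $\set{\bserpi(j)}$, $\set{\bsc_r \circ \bserpi(j)}$, $\set{\bscrpi \circ \bse_r(j)}$ exhibited after Theorem \ref{thm:Ext(Ir,Ir)-vector-space} together with the factorization \eqref{eq:erj-factorization-with-c}. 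Your normal-form and dimension-count argument makes that step explicit, and it is correct; the one small caveat is that Proposition \ref{prop:ExtI0I0-basic-relations}(1) only gives commutativity among the $\bse_r(p^i)$, so relation (3) in full (commutativity of the entire corner, including with $\bsc_r \circ \bscrpi$) also uses relation (1), which you do in fact invoke in the reduction-to-normal-form step.
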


\begin{remark}
The only part of Theorem \ref{theorem:Ext(Ir,Ir)-multiplicative} for which we haven't given a standalone proof in this paper is part \eqref{item:pth-power} in the case $r > 1$. Note that if $\bse_r(p^{r-1})^p = \mu \cdot \bsc_r \circ \bscrpi$ for some $0 \neq \mu \in k$, then the conjugation action of $\bspi$ implies that $[\bserpi(p^{r-1})]^p = \mu \cdot \bscrpi \circ \bsc_r$ for the same scalar. Assuming that the field $k$ is perfect, so that $\mu^{1/p} \in k$, one can replace $\bse_r(p^{r-1})$ with the scalar multiple $\bse_r := \mu^{-1/p} \cdot \bse_r(p^{r-1})$, and then the relations become $\bse_r^p = \bsc_r \circ \bscrpi$ and $(\bserpi)^p = \bscrpi \circ \bsc_r$. This more closely matches the presentations given in \cite{Drupieski:2016,Drupieski:2019b}.
\end{remark}

\section{Calculations after Franjou, Friedlander, Scorichenko, and Suslin} \label{section:FFSS}

\subsection{Setup}

For $1 \leq j \leq r$ and $\ell \in \set{0,1}$, set
	\begin{align*}
	V_{j,\ell} &= \Ext_{\bsp}^\bullet(\bsirell,S_0^{p^{r-j}(j)}), & W_{j,\ell} &= \Ext_{\bsp}^\bullet(\bsirell,\Lambda_0^{p^{r-j}(j)}), \\
	\Vbar_{j,\ell} &= \Ext_{\bsp}^\bullet(\bsirell,S_1^{p^{r-j}(j)}), & \Wbar_{j,\ell} &= \Ext_{\bsp}^\bullet(\bsir_\ell,\Lambda_1^{p^{r-j}(j)}).
	\end{align*}
The conjugation action of $\bspi$ defines even isomorphisms $V_{j,\ell} \cong \Vbar_{j,\ell+1}$ and $W_{j,\ell} \cong \Wbar_{j,\ell+1}$. The calculation of these graded superspaces is given (either directly, or via conjugation by $\bspi$) by:

\begin{theorem}[{\cite[Theorem 4.5.1]{Drupieski:2016}}] \label{D2016-4.5.1}
For all $1 \leq j \leq r$, one has
	\begin{align*}
	\Ext_{\bsp}^s(\bsirzero,S_0^{p^{r-j}(j)}) \cong \Ext_{\bsp}^{s+p^{r-j}-1}(\bsirzero,\Lambda_0^{p^{r-j}(j)}) &\cong \begin{cases} k & \text{if $s \equiv 0 \mod 2p^{r-j}$ and $s \geq 0$,} \\ 0 & \text{otherwise.} \end{cases} \\
	\Ext_{\bsp}^s(\bsirone,S_0^{p^{r-j}(j)}) \cong \Ext_{\bsp}^{s+p^{r-j}-1}(\bsirone,\Lambda_0^{p^{r-j}(j)}) &\cong \begin{cases} k & \text{if $s \equiv p^r \mod 2p^{r-j}$ and $s \geq p^r$,} \\ 0 & \text{otherwise.} \end{cases}
	\end{align*}
\end{theorem}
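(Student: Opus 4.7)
The plan is to recast the desired Ext groups by contravariant duality and then compute them using the injective resolutions $J(r)$ and $\Jbar(r)$ of $\bsirzero$ and $\bsirone$ constructed in Section~\ref{subsection:injective-resolutions}. A short Frobenius-twist/dual computation in the spirit of Section~\ref{subsubsec:duality} shows that $(\bsir_\ell)^\# \cong \bsir_\ell$, $(S_0^{p^{r-j}(j)})^\# \cong \Gamma_0^{p^{r-j}(j)}$, and $(\Lambda_0^{p^{r-j}(j)})^\# \cong \Lambda_0^{p^{r-j}(j)}$, so the $\#$-duality on Ext groups gives
\[
\Ext_{\bsp}^\bullet(\bsir_\ell, S_0^{p^{r-j}(j)}) \cong \Ext_{\bsp}^\bullet(\Gamma_0^{p^{r-j}(j)}, \bsir_\ell), \qquad \Ext_{\bsp}^\bullet(\bsir_\ell, \Lambda_0^{p^{r-j}(j)}) \cong \Ext_{\bsp}^\bullet(\Lambda_0^{p^{r-j}(j)}, \bsir_\ell),
\]
each of which is the cohomology of $\Hom_{\bsp}(-, J(r))$ or $\Hom_{\bsp}(-, \Jbar(r))$ applied to the source functor.

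The next step is to unpack these Hom-complexes via Yoneda. Because the terms of $J(r)$ and $\Jbar(r)$ are summands of $\bss^{p^r}(\Sha_r \otimes -)$ (from the $T(\bsi,r)$ blocks) and of $\bspi \circ \bss^{p^r}(\Shabar_r \otimes -)$ (from the $\Tbar(\bsi,r)$ blocks), applying \eqref{eq:Yonedalemma} yields
\[
\Hom_{\bsp}(\Gamma_0^{p^{r-j}(j)}, \bss^{p^r}(\Sha_r \otimes -)) \cong S^{p^{r-j}}(\Sha_r^{(j)}), \qquad \Hom_{\bsp}(\Lambda_0^{p^{r-j}(j)}, \bss^{p^r}(\Sha_r \otimes -)) \cong \Lambda^{p^{r-j}}(\Sha_r^{(j)}),
\]
while the $\Shabar_r$ blocks contribute zero because $\Shabar_r$ is purely odd (so $S_0^{n(j)}$ and $\Lambda_0^{n(j)}$ vanish on it for $n \geq 1$). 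What remains is an explicit cochain complex $C_S^\bullet$ (respectively $C_\Lambda^\bullet$) whose $m$-th term is the $\Z$-graded piece of $S^{p^{r-j}}(\Sha_r^{(j)})$ (respectively $\Lambda^{p^{r-j}}(\Sha_r^{(j)})$) singled out by the contraction indexing of $T(\bsi,r)$, and whose differential is induced by the Troesch differential $d(r)$.

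The theorem then reduces to the claim that $H^\bullet(C_S^\bullet)$ is one-dimensional in each cohomological degree $2ap^{r-j}$ with $0 \leq a < p^j$ and vanishes elsewhere, while $H^\bullet(C_\Lambda^\bullet)$ is one-dimensional in each cohomological degree $(2a+1)p^{r-j} - 1$ with $0 \leq a < p^j$ and vanishes elsewhere. Since every $m \in \N$ decomposes uniquely as $m = a + np^j$ with $0 \leq a < p^j$ and $n \in \N$, combining these cohomologies with the $\bigoplus_{n \geq 0}(-\subgrp{2np^r})$-shifts in $J(r)$ and the $\bigoplus_{n \geq 0}(-\subgrp{(2n+1)p^r})$-shifts in $\Jbar(r)$ reproduces precisely the four statements of Theorem~\ref{D2016-4.5.1}, with the $p^{r-j}-1$ offset between the $S$- and $\Lambda$-targets emerging naturally from the normalization of the contraction.

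The main obstacle will be the cohomology computations themselves. Since $d(r) = d_1^0 + d_p^1 + \cdots + d_{p^{r-1}}^{r-1}$ decomposes into commuting convolution-derivation pieces (Lemma~\ref{lemma:convolution-components}), the natural strategy is to filter by the degree contributed by one of the $d_{p^s}^s$ and chase a spectral sequence whose pages reduce to Koszul-like complexes, in the spirit of \cite{Troesch:2005} and \cite{Touze:2012}; the non-super case is then recovered directly by Remark~\ref{remark:restriction-to-non-super}, so the genuinely new labor is careful tracking of the $\Z_2$-grading on $\Sha_r^{(j)}$, including using the identifications \eqref{eq:conjugateiso} to convert between the statements indexed by $\ell = \zero$ and $\ell = \one$.
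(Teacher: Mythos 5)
The theorem you are proving is imported into this paper by citation (it is \cite[Theorem 4.5.1]{Drupieski:2016}); the paper gives no proof of its own. The original proof in that reference proceeds, as in FFSS, by running the hypercohomology spectral sequences $\rmI$ and $\rmII$ against the twisted De Rham complex $\Omega^{\bullet(j-1)}$ and inducting on $j$. Your proposal is therefore a genuinely different route, using tools that are only constructed in the present paper: duality, followed by the injective resolutions $J(r)$ and $\Jbar(r)$ built from the superized Troesch complex.

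The outline is correct, and the key structural observation is exactly right: because $\Shabar_r = \Pi(\Sha_r)$ is purely odd, $\Hom_{\bsp}(\Gamma_0^{p^{r-j}(j)}, \Tbar^\bullet)$ and $\Hom_{\bsp}(\Lambda_0^{p^{r-j}(j)}, \Tbar^\bullet)$ vanish, which also kills the splicing maps $\ve$, $\vebar$ in $J(r)$ and $\Jbar(r)$; consequently the Hom complex breaks into the direct sum $\bigoplus_{n\ge 0} C_S\subgrp{2np^r}$ (resp.\ $\bigoplus_{n\ge 0} C_S\subgrp{(2n+1)p^r}$ for $\Jbar(r)$), and similarly for $C_\Lambda$. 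Your bookkeeping that the unique decomposition $m = a + np^j$ with $0 \le a < p^j$ recovers the four congruence statements from the cohomology of a single length-$2(p^r-1)$ complex is also correct.

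Where I would push back is on your last paragraph. The ``main obstacle'' you identify --- computing $\opH^\bullet(C_S)$ and $\opH^\bullet(C_\Lambda)$ by a new spectral sequence argument, with ``careful tracking of the $\Z_2$-grading on $\Sha_r^{(j)}$'' --- is not actually there. Since $\Sha_r$ is purely even, $\Sha_r^{(j)}$ carries no $\Z_2$-grading to track; $S^{p^{r-j}}(\Sha_r^{(j)})$ and $\Lambda^{p^{r-j}}(\Sha_r^{(j)})$ are ordinary purely-even graded spaces. Moreover, by Yoneda and Remark~\ref{remark:restriction-to-non-super}, the cochain complexes $C_S^\bullet = \Hom_{\bsp}(\Gamma_0^{p^{r-j}(j)}, T(\bsi,r)^\bullet)$ and $C_\Lambda^\bullet = \Hom_{\bsp}(\Lambda_0^{p^{r-j}(j)}, T(\bsi,r)^\bullet)$ coincide, terms and differentials alike, with the classical complexes $\Hom_{\calP}(\Gamma^{p^{r-j}(j)}, T(I,r)^\bullet)$ and $\Hom_{\calP}(\Lambda^{p^{r-j}(j)}, T(I,r)^\bullet)$, because all evaluations happen on the purely even parameter $\Sha_r$. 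Their cohomology is thus $\Ext_{\calP}^\bullet(I^{(r)}, S^{p^{r-j}(j)})$ and $\Ext_{\calP}^\bullet(I^{(r)}, \Lambda^{p^{r-j}(j)})$, which are known and already of the claimed form (the $d=1$ case of \cite[\S4]{Franjou:1999}, or the recomputation in \cite{Touze:2012}). So you can close the remaining gap by citation instead of by a fresh spectral sequence calculation, and this makes your approach shorter than the original De Rham proof. The one thing worth making explicit, which you only implicitly use, is that vanishing of the $\Tbar$-summands is what guarantees the induced differential on the Hom complex has no cross-terms between successive shifted $T$-blocks.
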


Given a $\bsp$-coalgebra $A$ (i.e., a coalgebra object in the category $\bsp_{\ev}$) and a $\bsp$-algebra $B$ (i.e., an algebra object in the category $\bsp_{\ev}$), there exists a product operation on extension groups
	\begin{equation} \label{eq:cup-product}
	\Ext_{\bsp}^m(A,B) \otimes \Ext_{\bsp}^n(A,B) \to \Ext_{\bsp}^{m+n}(A \otimes A, B \otimes B) \to \Ext_{\bsp}^{m+n}(A,B)
	\end{equation}
that we call the cup product; see \cite[\S3.4]{Drupieski:2016} for details. We are interested in these products when $A = \Gamma_\ell^{(r)} = \Gamma \circ \bsirell$ and either $B = S_\ell^{(j)} = S \circ \bsijell$ or $B = \Lambda_\ell^{(j)} = \Lambda \circ \bsijell$. Our goal is to prove:

\begin{theorem} \label{theorem:FFSS-4.5}
Let $\ell \in \set{0,1}$. For all $d \geq 1$ and all $1 \leq j \leq r$, the cup product maps
	\begin{equation} \label{eq:cup-products}
	\begin{aligned}
	(V_{j,\ell})^{\otimes d} &\to \Ext_{\bsp}^\bullet(\Gamma_\ell^{d(r)},S_0^{dp^{r-j}(j)}), \qquad & (W_{j,\ell})^{\otimes d} &\to \Ext_{\bsp}^\bullet(\Gamma_\ell^{d(r)},\Lambda_0^{dp^{r-j}(j)}), \\
	(\Vbar_{j,\ell})^{\otimes d} &\to \Ext_{\bsp}^\bullet(\Gamma_\ell^{d(r)},S_1^{dp^{r-j}(j)}), \qquad & (\Wbar_{j,\ell})^{\otimes d} &\to \Ext_{\bsp}^\bullet(\Gamma_\ell^{d(r)},\Lambda_1^{dp^{r-j}(j)})
	\end{aligned}
	\end{equation}
factor to induce isomorphisms of graded vector spaces
	\begin{equation} \label{eq:cup-products-factored}
	\begin{aligned}
	S^d(V_{j,\ell}) &\cong \Ext_{\bsp}^\bullet(\Gamma_\ell^{d(r)},S_0^{dp^{r-j}(j)}), \qquad & \Lambda^d(W_{j,\ell}) &\cong \Ext_{\bsp}^\bullet(\Gamma_\ell^{d(r)},\Lambda_0^{dp^{r-j}(j)}), \\
	S^d(\Vbar_{j,\ell}) &\cong \Ext_{\bsp}^\bullet(\Gamma_\ell^{d(r)},S_1^{dp^{r-j}(j)}), \qquad & \Lambda^d(\Wbar_{j,\ell}) &\cong \Ext_{\bsp}^\bullet(\Gamma_\ell^{d(r)},\Lambda_1^{dp^{r-j}(j)}).
	\end{aligned}
	\end{equation}
\end{theorem}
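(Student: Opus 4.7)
The proof proceeds by induction on $j$, with the case $d = 1$ supplied at each stage by Theorem \ref{D2016-4.5.1}. Conjugation by $\bspi$ interchanges the $\ell = 0$ and $\ell = 1$ statements, while the identifications \eqref{eq:conjugateiso} distinguish the $V$ versus $\Vbar$ and $W$ versus $\Wbar$ cases, so it suffices to establish, say, the $V_{j,\ell}$ and $W_{j,\ell}$ isomorphisms for a single choice of $\ell$. That the cup products \eqref{eq:cup-products} factor through the appropriate symmetric or exterior power on the cohomology level is formal, arising from the Hopf (co)algebra structures on $\Gamma^{d(r)}$, $\bss$, and $\bsl$ together with the graded commutativity of the cup product; the content of the theorem is that the factored maps are isomorphisms.

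For the base case $j = 1$, apply $\Hom_{\bsp}(\Gamma_\ell^{d(r)}, -)$ to either the contracted Troesch complex $T(\bss^{dp^{r-1}}, 1)$ or its $\bspi$-conjugate $\Tbar(\bss^{dp^{r-1}}, 1)$, the choice governed by $\ell$ and by whether $S_0$ or $\Lambda_0$ is being computed. By Corollary \ref{cor:T(Sn,r)-cohomology}, the cohomology of this complex is concentrated in degrees $m(p-1)$ for $0 \leq m \leq dp^{r-1}$, with summands $S_0^{dp^{r-1}-m(1)} \otimes \Lambda_1^{m(1)}$. The resulting hypercohomology spectral sequence has $E_2$-page built from $\Ext$-groups into tensor products of symmetric and exterior twists, and its abutment $H^\bullet(\Hom_{\bsp}(\Gamma_\ell^{d(r)}, T))$ is computable directly from the explicit form of each injective term via Yoneda's lemma \eqref{eq:Yonedalemma} and the exponential decomposition $\bss(\Sha_1 \otimes U) \cong \bigotimes_{i=0}^{p-1} \bss(\sha_i \otimes U)$, combined with the $d = 1$ input from Theorem \ref{D2016-4.5.1}. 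Comparing both sides with careful bookkeeping of K\"{u}nneth splittings isolates the desired pieces and yields $\Ext_{\bsp}^\bullet(\Gamma_\ell^{d(r)}, S_0^{dp^{r-1}(1)}) \cong S^d(V_{1,\ell})$ and $\Ext_{\bsp}^\bullet(\Gamma_\ell^{d(r)}, \Lambda_0^{dp^{r-1}(1)}) \cong \Lambda^d(W_{1,\ell})$.

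For the induction step $j \to j+1$, construct an injective coresolution of $S_0^{dp^{r-j-1}(j+1)}$ (respectively $\Lambda_0^{dp^{r-j-1}(j+1)}$) whose terms are tensor products of Frobenius-twisted exponential functors at twist level $\leq j$, modeled on the Koszul-type constructions of \cite[\S5]{Franjou:1999}. Applying $\Hom_{\bsp}(\Gamma_\ell^{d(r)}, -)$ yields a spectral sequence whose $E_2$-page can be identified by the inductive hypothesis together with a K\"{u}nneth argument; degree comparison then extracts the desired isomorphism. The main obstacle will be combinatorial bookkeeping: tracking the parity and degree shifts introduced by $\bspi$-conjugation and by the decomposition $\bsir = \bsirzero \oplus \bsirone$ through each spectral sequence, and verifying convergence despite $V_{j,\ell}$, $W_{j,\ell}$, $\Vbar_{j,\ell}$, $\Wbar_{j,\ell}$ being infinite-dimensional graded superspaces, in contrast to the classical setting of \cite{Franjou:1999}. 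Convergence is ensured by restricting attention to each polynomial degree separately, which is preserved by all operations involved.
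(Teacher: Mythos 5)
Your overall outline is aligned with the paper's strategy — use the contracted Troesch complex $T(\bss^{dp^{r-1}},1)$ to kick off the $j=1$ base case, then induct on the number of Frobenius twists via hypercohomology spectral sequences — but there are a few substantive gaps.

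First, your claim that a single choice of $\ell$ suffices is not correct. Conjugation by $\bspi$ identifies $V_{j,\ell}$ with $\Vbar_{j,\ell+1}$ and $W_{j,\ell}$ with $\Wbar_{j,\ell+1}$, so it takes care of the two \emph{barred} cup product maps once the unbarred ones are established, but it does \emph{not} relate $V_{j,0}$ to $V_{j,1}$. The paper handles both $\ell=0$ and $\ell=1$ separately (and the sign bookkeeping in the generalized Koszul complexes is genuinely different in the two cases — see the factor $(-1)^{d-s}$ in $\kappa_s$ and the shift $\Kz_{dp^{r-1}}^{\bullet(1)}\subgrp{dp^{r-1}}$ when $\ell=1$).

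Second, the $j=1$ case is more than ``careful bookkeeping of K\"{u}nneth splittings.'' The cohomology of $T(\bss^{dp^{r-1}},1)$ is spread over degrees $0$ through $dp^{r-1}(p-1)$ with nontrivial summands $S_0^{n-\ell(r)}\otimes\Lambda_1^{\ell(r)}$ in each, so the second hypercohomology spectral sequence $\rmII(\Gamma_\ell^{d(r)},T(\bss^{dp^{r-1}},1))$ has many nonzero rows. The paper analyzes it by a \emph{triple} induction (on $j$, then on $d$, then on cohomological degree $t$), and within the $j=1$ step it introduces the generalized Koszul complex $Q_d(f_\ell)$ built from the differential $f_\ell: \Wbar_{1,\ell}\to V_{1,\ell}$ of the $d=1$ spectral sequence, the Touz\'{e} chain map $h$ from Proposition~\ref{prop:Touze-2.4} (to get multiplicativity of the hypercohomology computation), and the Koszul complex $\Kz_{dp^{r-1}}^{\bullet(1)}$ via its first hypercohomology spectral sequence to obtain the $\Lambda^d(W_{1,\ell})$ isomorphism. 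None of this is in your sketch, and without it the base case does not close.

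Third, for the induction step $j\to j+1$ you propose building an injective coresolution of $S_0^{dp^{r-j-1}(j+1)}$ with terms at twist level $\leq j$. The paper (and FFSS) does \emph{not} do this — the whole point of the De Rham complex $\Omega\circ\bsijzero$ is to avoid constructing such a resolution, which is not available in general. Instead one uses $\Omega_{dp^{r-j+1}}^{\bullet(j-1)}$, which has nontrivial cohomology (so it is not a resolution), and compares the \emph{first and second} hypercohomology spectral sequences, mimicking \cite[\S4]{Franjou:1999} (not \S5, which you cite and which treats other extension groups). Replacing the De Rham complex with an injective resolution is a different — and not obviously executable — plan that would need to be justified from scratch.
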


The cup products factor through the indicated symmetric and exterior powers by \cite[Lemma 3.4.1]{Drupieski:2016}, so the content of Theorem \ref{theorem:FFSS-4.5} is in proving that the factored maps are isomorphisms. The proof of Theorem \ref{theorem:FFSS-4.5}, presented beginning in Section \ref{subsec:proof-j=1}, mimics the triple induction argument given in \cite[\S4]{Franjou:1999}, arguing first by induction on the number $j$ of Frobenius twists, then by induction on $d$, and lastly by induction on the cohomological degree. The induction step for $j$ is accomplished through the use of hypercohomology spectral sequences.

Let $C$ be a bounded below cochain complex in $\bsp_{\ev}$, and let $A \in \bsp$, considered as a chain complex concentrated in homological degree zero. Recall from \cite[\S3.6]{Drupieski:2016} that there exist two spectral sequences converging to the hypercohomology group $\bbExt_{\bsp}^\bullet(A,C)$:
	\begin{align}
	\rmI_1^{s,t} = \rmI_1^{s,t}(A,C) &= \Ext_{\bsp}^t(A,C^s) \Rightarrow \bbExt_{\bsp}^{s+t}(A,C), & \rmI(A,C) \label{eq:first-hyper} \\
	\rmII_2^{s,t} = \rmII_2^{s,t}(A,C) &= \Ext_{\bsp}^s(A,\opH^t(C)) \Rightarrow \bbExt_{\bsp}^{s+t}(A,C). & \rmII(A,C) \label{eq:second-hyper}
	\end{align}
These are the spectral sequences associated to applying the functor $\RHom_{\bsp}(A,-)$ to the cochain complex $C$. If $C$ is concentrated in cohomology degree $0$, then $\Ext_{\bsp}^\bullet(A,C) = \bbExt_{\bsp}^\bullet(A,C)$.

In the classical case of \cite[\S4]{Franjou:1999}, the spectral sequences are applied for $C = \Omega \circ I^{(j-1)}$, a Frobenius twist of the De Rham complex functor. For $j \geq 2$, the composition $\Omega \circ \bsi_0^{(j-1)}$ defines a strict polynomial superfunctor by the discussion of Section \ref{subsubsec:Frobenius}, and we can directly imitate the arguments from the classical case. For $j=1$, we could consider the super De Rham complex functor $\bso$, but its cohomology is spread over a wider range of cohomology degrees than in the classical case (see \cite[Remark 4.1.3]{Drupieski:2016}), which makes the second hypercohomology spectral sequence \eqref{eq:second-hyper} unweildy. So to handle the case $j=1$, we instead consider $C = T(\bss^{dp^{r-1}},1)$. This complex consists of injective objects, so the first hypercohomology spectral sequence \eqref{eq:first-hyper} collapses at the first page, and the cohomology of $T(\bss^{dp^{r-1}},1)$ is relatively simple, which makes the second hypercohomology spectral sequence \eqref{eq:second-hyper} manageable to deal with.

The induction argument on $j$ makes use of the following proposition:

\begin{proposition}[{\cite[Proposition 4.1]{Franjou:1999}}] \label{prop:FFSS-4.1}
Let $T_1^\bullet \in \bsp_{d_1 p^r}$ and $T_2^\bullet \in \bsp_{d_2 p^r}$ be bounded below cochain complexes. If $d = d_1 + d_2$, then the first (resp.\ second) spectral sequence associated to applying the functor $\RHom_{\bsp}(\Gamma_\ell^{d(r)},-)$ to the complex $T_1^\bullet \otimes T_2^\bullet$ is naturally isomorphic to the tensor product of the first (resp.\ second) spectral sequences associated to applying $\RHom_{\bsp}(\Gamma_\ell^{d_i(r)},-)$ to $T_i^\bullet$.
\end{proposition}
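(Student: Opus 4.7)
The plan is to imitate the classical proof of \cite[Proposition 4.1]{Franjou:1999}, building a natural morphism between the spectral sequences and checking that it is an isomorphism on an appropriate page. The essential input is that $\bsg$ is an exponential superfunctor (Section \ref{subsubsec:dividedpowers}), so for each decomposition $d = d_1 + d_2$ there is a natural coproduct component $\Delta_{d_1,d_2} : \bsg^d \to \bsg^{d_1} \otimes \bsg^{d_2}$; pre-composing with $\bsirell$ yields a morphism $\Gamma_\ell^{d(r)} \to \Gamma_\ell^{d_1(r)} \otimes \Gamma_\ell^{d_2(r)}$. I would choose Cartan--Eilenberg injective resolutions $J_i^{\bullet,\bullet} \to T_i^\bullet$ in $\bsp_\ev$. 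Tensor product in $\bsp_\ev$ is exact (being computed pointwise) and preserves injectivity (via the exponential isomorphism for $\bss$ together with Lemma \ref{lemma:AprojectiveLinjective}), so $\Tot(J_1^{\bullet,\bullet} \otimes J_2^{\bullet,\bullet})$ is a Cartan--Eilenberg injective resolution of $T_1^\bullet \otimes T_2^\bullet$. Pairing $\Delta_{d_1,d_2}$ with tensor product of morphisms then yields an even morphism from the tensor product of the two hypercohomology spectral sequences for the $T_i^\bullet$ to the hypercohomology spectral sequence for $T_1^\bullet \otimes T_2^\bullet$.

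It then remains to show this morphism is an isomorphism on the $\rmI_1$-page (hence on all subsequent pages) for the first spectral sequence and on the $\rmII_2$-page (hence on all subsequent pages) for the second. By the K\"unneth theorem for complexes of $k$-vector spaces (augmented, for the $\rmII_2$-page, by the K\"unneth isomorphism $\opH^\bullet(T_1^\bullet \otimes T_2^\bullet) \cong \opH^\bullet(T_1^\bullet) \otimes \opH^\bullet(T_2^\bullet)$), both reductions boil down to the following \emph{cup product lemma}: for all $F_i \in \bsp_{d_i p^r}$,
\[
\Ext_{\bsp}^\bullet(\Gamma_\ell^{d_1(r)}, F_1) \otimes \Ext_{\bsp}^\bullet(\Gamma_\ell^{d_2(r)}, F_2) \xrightarrow{\sim} \Ext_{\bsp}^\bullet(\Gamma_\ell^{d(r)}, F_1 \otimes F_2).
\]
To prove the lemma, take injective resolutions $F_i \to J_i^\bullet$; their tensor product is again an injective resolution of $F_1 \otimes F_2$, and a further application of the K\"unneth theorem for complexes of $k$-vector spaces reduces the lemma to showing that for each pair of injective generators $J_i = \bss_{V_i}^{d_i p^r}$ (and their $\Pi$-shifted variants, handled via Lemma \ref{lemma:AprojectiveLinjective}), the map
\[
\Hom_{\bsp}(\Gamma_\ell^{d_1(r)}, J_1) \otimes \Hom_{\bsp}(\Gamma_\ell^{d_2(r)}, J_2) \to \Hom_{\bsp}(\Gamma_\ell^{d(r)}, J_1 \otimes J_2)
\]
is an isomorphism. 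Applying the Yoneda isomorphism \eqref{eq:Yonedalemma}, both sides identify respectively with $(\Gamma_\ell^{d_1(r)})^\#(V_1) \otimes (\Gamma_\ell^{d_2(r)})^\#(V_2)$ and $(\Gamma_\ell^{d(r)})^\#(V_1 \oplus V_2)$. Since duality commutes with post-composition by $\bsirell$, one has $(\Gamma_\ell^{d(r)})^\# \cong \bss^d \circ \bsirell$, and since $\bss \circ \bsirell$ is exponential, there is an exponential decomposition $(\Gamma_\ell^{d(r)})^\#(V_1 \oplus V_2) \cong \bigoplus_{d_1+d_2=d} (\Gamma_\ell^{d_1(r)})^\#(V_1) \otimes (\Gamma_\ell^{d_2(r)})^\#(V_2)$, under which the cup product becomes projection onto the $(d_1, d_2)$-summand.

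The main obstacle is not conceptual but rather the bookkeeping verification that the cup product pairing defined via $\Delta_{d_1, d_2}$ corresponds, under Yoneda, to the exponential summand projection, with correct Koszul signs in the super setting (see Section \ref{subsubsec:duality}). This is patterned directly on \cite[Proposition 1.7]{Franjou:1999}, and given the care taken with sign conventions in Section \ref{sec:preliminaries}, the verification should be routine but tedious.
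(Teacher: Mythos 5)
Your overall strategy matches the paper's: the paper's proof is literally ``same as \cite[Proposition 4.1]{Franjou:1999}, using \cite[Theorem 3.4.2]{Drupieski:2016} in lieu of \cite[Theorem 1.7]{Franjou:1999}.'' The ``cup product lemma'' you formulate is exactly \cite[Theorem 3.4.2]{Drupieski:2016}, so you are re-deriving a result the paper simply cites. That is fine in principle, but the re-derivation has a gap at the Yoneda step. You claim $\Hom_{\bsp}(\Gamma_\ell^{d(r)},J_1\otimes J_2)\cong(\Gamma_\ell^{d(r)})^\#(V_1\oplus V_2)$, but $J_1\otimes J_2=\bss_{V_1}^{d_1p^r}\otimes\bss_{V_2}^{d_2p^r}$ is only \emph{one} summand of $\bss_{V_1\oplus V_2}^{dp^r}\cong\bigoplus_{a+b=dp^r}\bss_{V_1}^a\otimes\bss_{V_2}^b$, so the Yoneda isomorphism \eqref{eq:Yonedalemma} does not apply to $J_1\otimes J_2$ directly; $\Hom_{\bsp}(\Gamma_\ell^{d(r)},J_1\otimes J_2)$ is only the corresponding summand of $(\Gamma_\ell^{d(r)})^\#(V_1\oplus V_2)$. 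With the target (mis)identified as the full sum $\bigoplus_{d_1'+d_2'=d}(\Gamma_\ell^{d_1'(r)})^\#(V_1)\otimes(\Gamma_\ell^{d_2'(r)})^\#(V_2)$, the cup product is an inclusion of a summand and cannot be an isomorphism; your phrase ``becomes projection onto the $(d_1,d_2)$-summand'' describes a map going the wrong way and does not resolve this.

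The missing ingredient is a weight argument. Because $\bsirell$ multiplies $\Z$-degrees by $p^r$, the $\G_m\times\G_m$-bigrading on $(\Gamma_\ell^{d(r)})^\#(V_1\oplus V_2)\cong S^d(V_{1,\ell}^{(r)}\oplus V_{2,\ell}^{(r)})$ coming from scaling $V_1$ and $V_2$ separately is concentrated in bi-weights in $p^r\Z\times p^r\Z$. Equivalently, $\Hom_{\bsp}(\Gamma_\ell^{d(r)},\bss_{V_1}^a\otimes\bss_{V_2}^b)=0$ unless $p^r\mid a$ and $p^r\mid b$. In particular the summand of $(\Gamma_\ell^{d(r)})^\#(V_1\oplus V_2)$ corresponding to $J_1\otimes J_2$ is precisely $(\Gamma_\ell^{d_1(r)})^\#(V_1)\otimes(\Gamma_\ell^{d_2(r)})^\#(V_2)$, and one must then check (with Koszul signs as in Section \ref{subsubsec:duality}) that the cup product realizes this identification. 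Once you add this, or simply cite \cite[Theorem 3.4.2]{Drupieski:2016} as the paper does, the argument goes through.
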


\begin{proof}
Same as for \cite[Proposition 4.1]{Franjou:1999}, using \cite[Theorem 3.4.2]{Drupieski:2016} in lieu of \cite[Theorem 1.7]{Franjou:1999}.
\end{proof}

Let $(\Omega,d)$ and $(\Kz,\kappa)$ be the De Rham and Koszul complex functors, respectively. So $\Omega_n^i = S^{n-i} \otimes \Lambda^i = \Kz_n^{-i}$, and $d$ and $\kappa$ are natural transformations
	\begin{gather}
	d: \Omega_n^i = S^{n-i} \otimes \Lambda^i \xrightarrow{\Delta \otimes 1} S^{n-i-1} \otimes I \otimes \Lambda^i \xrightarrow{1 \otimes m} S^{n-i-1} \otimes \Lambda^{i+1} = \Omega_n^{i+1}, \\
	\kappa: \Kz_n^{-i} = S^{n-i} \otimes \Lambda^i \xrightarrow{1 \otimes \Delta} S^{n-i} \otimes I \otimes \Lambda^{i-1} \xrightarrow{m \otimes 1} S^{n-i+1} \otimes \Lambda^{i-1} = \Kz_n^{-i+1} \label{eq:Koszul-complex}
	\end{gather}
defined in terms of the product and coproduct operations on $S$ and $\Lambda$. (Note that our indexing of $\Kz_n^\bullet$ makes it a bounded below, non-positively graded cochain complex, with $\kappa$ of cohomological degree $+1$. This differs slightly from the convention in \cite{Franjou:1999}.) Then for each $j \geq 1$, we can consider $\Omega \circ \bsijzero$ and $\Kz \circ \bsijzero$ as complexes of strict polynomial superfunctors.

\begin{convention}
We will simply write $\Omega^{(j)}$ and $\Kz^{(j)}$ rather than $\Omega \circ \bsijzero$ or $\Kz \circ \bsijzero$.
\end{convention}

\begin{corollary}[cf.\ {\cite[Corollary 4.2]{Franjou:1999}}] \label{cor:FFSS-4.2}
Let $1 \leq j \leq r$, and let $\ell \in \set{0,1}$.
\begin{enumerate}
\item Suppose $j \geq 2$. All differentials of the spectral sequence $\rmI(\Gamma_\ell^{d(r)},(\Omega_{p^{r-j+1}}^{\bullet (j-1)})^{\otimes d})$ are trivial.

\item Suppose $j \geq 2$ (resp.\ $j \geq 1$). In the spectral sequence
	\[
	\rmII(\Gamma_\ell^{d(r)},(\Omega_{p^{r-j+1}}^{\bullet (j-1)})^{\otimes d}) \cong \rmII(\Gamma_\ell^{d(r)},\Omega_{p^{r-j+1}}^{\bullet (j-1)})^{\otimes d}
	\]
(resp.\ in the spectral sequence $\rmI(\Gamma_\ell^{d(r)},(\Kz_{p^{r-j}}^{\bullet(j)})^{\otimes d}) \cong \rmI(\Gamma_\ell^{d(r)},\Kz_{p^{r-j}}^{\bullet(j)})^{\otimes d}$), all differentials except for $d_{p^{r-j}+1}$ (resp.\ except for $d_{p^{r-j}}$) are trivial. The only nontrivial differential sends a homogeneous element $u_1 \otimes \cdots \otimes u_d$ to
	\begin{equation} \label{eq:differential-derivation} \textstyle
	\sum_{i=1}^d (-1)^{\sigma(i)} u_1 \otimes \cdots \otimes \partial(u_i) \otimes \cdots \otimes u_d,
	\end{equation}
where $\partial$ is the only nontrivial differential in $\rmII(\bsirell,\Omega_{p^{r-j+1}}^{\bullet(j-1)})$ (resp.\ in $\rmI(\bsirell,\Kz_{p^{r-j}}^{\bullet(j)})$), and $\sigma(i)$ denotes the number of terms of odd total degree among $u_1,\ldots,u_{i-1}$.
\end{enumerate}
\end{corollary}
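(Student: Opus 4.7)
The strategy follows \cite[Corollary 4.2]{Franjou:1999} adapted to the super setting. The first step is to invoke Proposition \ref{prop:FFSS-4.1} to reduce each spectral sequence to a $d$-fold tensor product of its $d=1$ counterpart. If the $d=1$ sequence has at most one nontrivial differential at some page $c$, the graded Leibniz rule for tensor products of spectral sequences both propagates this uniqueness to the $d$-fold version and yields the explicit formula \eqref{eq:differential-derivation} for the surviving differential. Thus it suffices to handle $d=1$.

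For the Koszul spectral sequence at $d=1$ and $j \geq 1$: the classical Koszul complex $\Kz^\bullet_n$ is acyclic for $n \geq 1$, and this persists under the Frobenius twist, so $\bbExt^\bullet(\bsirell, \Kz^{\bullet(j)}_{p^{r-j}}) = 0$ via the $\rmII$-spectral sequence. The $\rmI_1$-page, whose entries have the form $\Ext^t(\bsirell, S_0^{(p^{r-j}+s)(j)} \otimes \Lambda_0^{-s(j)})$, must therefore be killed entirely by differentials. Decomposing each mixed Ext group via the K\"unneth-type result \cite[Theorem 3.4.2]{Drupieski:2016}, and then applying Theorem \ref{D2016-4.5.1} to the pure $S$ and $\Lambda$ factors, one finds that the nonzero entries of $\rmI_1$ are supported on just two diagonals, related by cohomological shift $2p^{r-j}-1$; the only page whose differential can both originate and terminate at supported bidegrees is $E_{p^{r-j}}$.

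For the De Rham cases at $d=1$ and $j \geq 2$: I would invoke the classical Cartier isomorphism $\opH^t(\Omega^\bullet_{p^{r-j+1}}) \cong \Omega^{t/p(1)}_{p^{r-j}}$ (if $p \mid t$; zero otherwise), which after twisting by $\bsi_0^{(j-1)}$ identifies $\rmII_2^{s,pi}$ with $\Ext^s(\bsirell, S_0^{(p^{r-j}-i)(j)} \otimes \Lambda_0^{i(j)})$. The same sparse-pattern analysis as in the Koszul case, now combined with the column spacing $p$ of the support, identifies $d_{p^{r-j}+1}$ as the unique possible nontrivial differential in $\rmII$. For the first spectral sequence, equating the rank of $\rmI_1$ with that of the abutment (computed via the just-determined $\rmII$) forces all $\rmI$-differentials to vanish.

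The main technical obstacle is the K\"unneth-type decomposition for $\Ext^\bullet(\bsirell, S_0^{a(j)} \otimes \Lambda_0^{b(j)})$. It is formal in the classical $\calP$-setting, but the super framework requires care with signs and parities in order to produce the expected tensor-product behavior; granted this, the rank and bidegree bookkeeping that rules out all but one differential is a routine check.
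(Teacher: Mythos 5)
Your overall strategy---reduce to $d = 1$ via Proposition~\ref{prop:FFSS-4.1}, then identify the sparse support of the $d=1$ spectral sequence pages---matches the paper, which handles the $d=1$ cases by citing prior calculations from \cite{Drupieski:2016}. Your Koszul sketch is essentially right: the decisive input is the vanishing of $\Ext_{\bsp}^\bullet(\bsirell, S_0^{a(j)} \otimes \Lambda_0^{b(j)})$ for $a, b > 0$ (the K\"unneth-type obstacle you flag, which the paper obtains from \cite[Theorem~3.4.3]{Drupieski:2016}); this confines $\rmI_1^{s,t}(\bsirell, \Kz_{p^{r-j}}^{\bullet(j)})$ to the two columns $s = 0$ and $s = -p^{r-j}$ (columns, not ``diagonals''), which alone forces $d_{p^{r-j}}$ to be the only candidate differential. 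You do not need Theorem~\ref{D2016-4.5.1} for this step.

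The De Rham analysis contains a genuine error. The Cartier isomorphism you invoke, $\opH^t(\Omega^\bullet_{p^{r-j+1}}) \cong \Omega^{t/p(1)}_{p^{r-j}}$ conditioned on $p \mid t$, is wrong: the correct statement is $\opH^t(\Omega^\bullet_{pm}) \cong \Omega^{t(1)}_m$, with no division of the cohomological index and no divisibility hypothesis. (For instance $\opH^1(\Omega^\bullet_p) \cong \Lambda^{1(1)} = \bsi^{(1)} \neq 0$, contradicting your formula, which would have it vanish.) Your version, taken at face value, would put the nonzero rows of $\rmII_2(\bsirell, \Omega_{p^{r-j+1}}^{\bullet(j-1)})$ at $t \in \{0, p, \ldots, p^{r-j+1}\}$, and the K\"unneth vanishing would then leave rows $\{0, p^{r-j+1}\}$, yielding the \emph{wrong} index $d_{p^{r-j+1}+1}$. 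With the correct Cartier isomorphism, the a priori rows are $0 \le t \le p^{r-j}$, and the same K\"unneth vanishing restricts these to $\{0, p^{r-j}\}$, which is exactly what produces $d_{p^{r-j}+1}$; the ``column spacing $p$'' you mention plays no role. So your stated conclusion is correct, but it does not follow from your stated premise. (The paper gets this two-row support directly from \cite[p.~1408]{Drupieski:2016}, and the triviality of all differentials in $\rmI(\bsirell, \Omega_{p^{r-j+1}}^{\bullet(j-1)})$ from \cite[Lemma~4.5.2]{Drupieski:2016}; your rank-comparison idea for the latter is reasonable in principle, but it currently rests on the flawed $\rmII$-analysis.)
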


\begin{proof}
This follows directly from Proposition \ref{prop:FFSS-4.1} and the calculation of the nontrivial differentials in the spectral sequences $\rmI(\bsirell,\Omega_{p^{r-j+1}}^{\bullet (j-1)})$, $\rmII(\bsirell,\Omega_{p^{r-j+1}}^{\bullet(j-1)})$, and $\rmI(\bsirell,\Kz_{p^{r-j}}^{\bullet(j)})$. The differentials in $\rmI(\bsirell,\Omega_{p^{r-j+1}}^{\bullet (j-1)})$ were all determined to be zero in the proof of \cite[Lemma 4.5.2]{Drupieski:2016} (see the bottom of page 1407, noting that a shift in $j$ is required), and the result for $\rmII(\bsirell,\Omega_{p^{r-j+1}}^{\bullet(j-1)})$ follows from the observation in the middle of \cite[p.\ 1408]{Drupieski:2016} that $\rmII_2^{s,t} = 0$ unless $t = 0$ or $t = p^{r-j}$. Finally, since $\Kz^{(j)} \cong S^{(j)} \otimes \Lambda^{(j)}$, \cite[Theorem 3.4.3]{Drupieski:2016} implies that $\rmI_1^{s,t}(\bsirell,\Kz_{p^{r-j}}^{\bullet(j)}) = 0$ unless $s = -p^{r-j}$ or $s = 0$. Then the only nontrivial differential in $\rmI(\bsirell,\Kz_{p^{r-j}}^{\bullet(j)})$ is $d_{p^{r-j}}$.
\end{proof}

\begin{remark} \label{remark:Koszul-isomorphism}
Since the Koszul complex $\Kz$ is exact, one gets $\rmII_2^{s,t}(\bsirell,\Kz_{p^{r-j}}^{\bullet(j)}) = 0$, and hence $\rmI(\bsirell,\Kz_{p^{r-j}}^{\bullet(j)}) \Rightarrow 0$. This implies that the differential $d_{p^{r-j}}$ in $\rmI(\bsirell,\Kz_{p^{r-j}}^{\bullet(j)})$ defines isomorphisms
	\begin{equation} \label{eq:theta}
	\theta: \Ext_{\bsp}^t(\bsirell,\Lambda_0^{p^{r-j}(j)}) = \rmI_{p^{r-j}}^{-p^{r-j},t} \xrightarrow{\cong} \rmI_{p^{r-j}}^{0,t-(p^{r-j}-1)} = \Ext_{\bsp}^{t-(p^{r-j}-1)}(\bsirell,S_0^{p^{r-j}(j)}).
	\end{equation}
\end{remark}

In proving Theorem \ref{theorem:FFSS-4.5} for $j \geq 2$, we'll apply Proposition \ref{prop:FFSS-4.1} and Corollary \ref{cor:FFSS-4.2} while considering the natural product map $(\Omega_{p^{r-j+1}}^{\bullet (j-1)})^{\otimes d} \to \Omega_{d p^{r-j+1}}^{\bullet(j-1)}$ (which is a map of chain complexes) and the induced morphism of spectral sequences. To prove the case $j=1$ of Theorem \ref{theorem:FFSS-4.5}, we'll consider a morphism of complexes $T(\bss^{p^{r-1}},1)^{\otimes d} \to T(\bss^{d p^{r-1}},1)$ that arises from a construction by Touz\'{e} \cite{Touze:2010a}. Let $C$ and $D$ be two $p$-complexes whose $p$-differentials each raise the $\Z$-degrees of elements by $\alpha = 1$. Let $p(C,D) = \bigoplus_{i,j \geq 0} C^{pi} \otimes D^{pj}$, considered as a graded superspace with $C^{pi} \otimes D^{pj}$ in $\Z$-degree $2(i+j)$.

\begin{lemma}[{\cite[Lemma 2.2]{Touze:2010a}}] \label{lemma:Touze-2.2}
Let $C$ and $D$ be two $p$-complexes whose $p$-differentials each raise $\Z$-degrees by $\alpha = 1$. For all $i,j \in \N$, the object $C^{pi} \otimes D^{pj}$ appears exactly once, in cohomological degree $2(i+j)$, in each of the contracted complexes $(C \otimes D)_{[1,0]}$ and $C_{[1,0]} \otimes D_{[1,0]}$.
\end{lemma}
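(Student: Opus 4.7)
The plan is a direct bookkeeping argument using the definitions of the contracted complex from Section \ref{subsec:p-complexes}, specialized to the case $\alpha = 1$. There is no real obstacle here; the only subtlety is making sure we account for both the even and odd cohomological degrees in each contracted complex.

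First, I would unwind the definitions. With $\alpha = 1$, the contraction $C_{[1,0]}$ satisfies $C_{[1,0]}^{2i} = C^{pi}$ and $C_{[1,0]}^{2i+1} = C^{pi+1}$, and similarly for $D_{[1,0]}$. The tensor product $C \otimes D$ is itself a $p$-complex whose differential $d(x \otimes y) = d_C(x) \otimes y + x \otimes d_D(y)$ raises the total $\Z$-degree by $1$, so $(C\otimes D)_{[1,0]}$ is defined with the same parameter $\alpha = 1$, giving
\[
(C \otimes D)_{[1,0]}^{2m} = \bigoplus_{a+b=pm} C^a \otimes D^b, \qquad (C \otimes D)_{[1,0]}^{2m+1} = \bigoplus_{a+b=pm+1} C^a \otimes D^b.
\]

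Next, I would count occurrences of $C^{pi} \otimes D^{pj}$ in $(C \otimes D)_{[1,0]}$. Such a summand requires $a = pi$ and $b = pj$, hence $a+b = p(i+j)$. Since $p(i+j)$ is divisible by $p$, this forces the even cohomological row with $pm = p(i+j)$, i.e.\ $m = i+j$, and the odd row contributes nothing. Thus $C^{pi} \otimes D^{pj}$ appears exactly once, in cohomological degree $2(i+j)$.

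Finally, I would count occurrences in $C_{[1,0]} \otimes D_{[1,0]}$, whose degree-$n$ component is $\bigoplus_{a+b=n} C_{[1,0]}^a \otimes D_{[1,0]}^b$. The first tensor factor equals $C^{pi}$ precisely when $a = 2i$, since the alternative $C^{pi} = C_{[1,0]}^{2i'+1} = C^{pi'+1}$ would force $pi = pi'+1$, which is impossible. Similarly the second factor equals $D^{pj}$ only when $b = 2j$. Hence $C^{pi} \otimes D^{pj}$ arises exactly once, in cohomological degree $n = 2i+2j = 2(i+j)$, matching the count for $(C \otimes D)_{[1,0]}$. This completes the proof.
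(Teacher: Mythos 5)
The paper does not prove this lemma; it simply cites it from Touz\'e's work \cite[Lemma 2.2]{Touze:2010a}. Your self-contained bookkeeping argument is correct: with $\alpha = 1$ one has $C_{[1,0]}^{2i}=C^{pi}$, $C_{[1,0]}^{2i+1}=C^{pi+1}$, and likewise for $D$ and for $C \otimes D$, and your degree-by-degree count accurately locates the unique copy of $C^{pi}\otimes D^{pj}$ in each contracted complex (the divisibility-by-$p$ observation correctly rules out the odd rows, and the uniqueness of the decomposition $pi = pi$, $pj = pj$ rules out multiple occurrences). This is essentially the argument one finds in Touz\'e's original source, so there is no methodological divergence to report.

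One tiny point worth making explicit for the reader, though it does not affect correctness: when you write that ``$(C\otimes D)_{[1,0]}$ is defined with the same parameter $\alpha = 1$,'' you are implicitly invoking the fact, recorded in Section \ref{subsec:tensor-product-p-complexes} of the paper, that $C \otimes D$ is again a $p$-complex with $p$-differential $d(x\otimes y)=d_C(x)\otimes y + x\otimes d_D(y)$. It would be cleaner to cite that construction directly rather than leaving it as an unstated assumption.
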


\begin{proposition}[{\cite[Proposition 2.4]{Touze:2010a}}] \label{prop:Touze-2.4}
Let $C$ and $D$ be two $p$-complexes whose $p$-differentials each raise $\Z$-degrees by $\alpha = 1$. There is a morphism of ordinary complexes $h_{C,D}: (C_{[1,0]} \otimes D_{[1,0]}) \to (C \otimes D)_{[1,0]}$ with the following properties:
\begin{enumerate}
\item $h_{C,D}$ is natural with respect to the $p$-complexes $C$ and $D$.
\item $h_{C,D}^0$ and $h_{C,D}^1$ are the identity maps.
\item There is a commutative diagram of graded objects:
	\[
	\xymatrix{
	(C_{[1,0]} \otimes D_{[1,0]})^\bullet \ar@{->}[r]^{h_{C,D}^\bullet} & (C \otimes D)_{[1,0]}^\bullet \\
	p(C,D)^\bullet \ar@{=}[r] \ar@{^(->}[u] & p(C,D)^\bullet. \ar@{^(->}[u]
	}
	\]
\end{enumerate}
\end{proposition}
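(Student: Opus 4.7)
The plan is to construct $h_{C,D}$ explicitly on each direct summand of $(C_{[1,0]} \otimes D_{[1,0]})^n$, then verify the chain-map condition together with the three enumerated properties.

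First I analyze the summand decomposition. Unwinding definitions, $(C_{[1,0]} \otimes D_{[1,0]})^n$ is a direct sum of terms of the form $C^a \otimes D^b$ with $a \in \set{pi, pi+1}$ and $b \in \set{pj, pj+1}$. Comparing with $(C \otimes D)_{[1,0]}^n$, which equals $(C \otimes D)^{pq}$ or $(C \otimes D)^{pq+1}$ according to whether $n = 2q$ or $n = 2q+1$, three of the four summand types ($C^{pi} \otimes D^{pj}$, $C^{pi} \otimes D^{pj+1}$, $C^{pi+1} \otimes D^{pj}$) have precisely the target $\Z$-degree and embed identically. The remaining summands $C^{pi+1} \otimes D^{pj+1}$ (occurring only in even cohomological degree $n = 2(i+j+1)$) have $\Z$-degree $p-2$ less than needed.

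My proposal is to set $h_{C,D}$ equal to the identity on the first three summand types, and on a summand $C^{pi+1} \otimes D^{pj+1}$ to define
\[
h_{C,D}(c \otimes d) = \sum_{a+b = p-2} (-1)^b \, d_C^a(c) \otimes d_D^b(d),
\]
which raises $\Z$-degree by exactly $p-2$ and lands in the correct target. With this definition, the three enumerated properties are almost immediate: naturality in $C$ and $D$ is clear because the formula involves only $d_C$, $d_D$, and the identity; for $n = 0,1$ only the identity-type summands appear, so $h_{C,D}^0$ and $h_{C,D}^1$ are the identity; and the subcomplex $p(C,D) = \bigoplus_{i,j \geq 0} C^{pi} \otimes D^{pj}$ sits inside the identity-type summands, giving the commuting diagram of property (3).

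The substantive step is verifying the chain-map condition $\delta \circ h_{C,D} = h_{C,D} \circ \delta$, which splits into four cases according to the type of input summand. In each case the verification reduces to matching terms between the expansion of $(d_C \otimes 1 + 1 \otimes d_D)^{p-1}$ (the differential on $(C \otimes D)_{[1,0]}$ in odd cohomological degrees) and the image under $h_{C,D}$ of the source differential; the essential combinatorial input is the classical congruence $\binom{p-1}{k} \equiv (-1)^k \pmod{p}$, together with the fact that $p-1$ is even so $(-1)^{p-1-a'} = (-1)^{a'}$. The hard part will be tracking the signs from the Koszul rule $d(x \otimes y) = d(x) \otimes y + (-1)^{|x|} x \otimes d(y)$ in the source $C_{[1,0]} \otimes D_{[1,0]}$ and verifying their consistency with the sign-free tensor differential $d_C \otimes 1 + 1 \otimes d_D$ on $C \otimes D$; once this bookkeeping is correct, the chain-map identity follows directly from the binomial congruence above.
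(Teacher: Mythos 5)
Your proof is correct, and the explicit formula $h_{C,D}(c\otimes d)=\sum_{a+b=p-2}(-1)^b\,d_C^a(c)\otimes d_D^b(d)$ on the diagonal summands $C^{pi+1}\otimes D^{pj+1}$ (identity elsewhere), verified via $\binom{p-1}{k}\equiv(-1)^k\pmod p$ together with the mismatch between the Koszul-signed differential on $C_{[1,0]}\otimes D_{[1,0]}$ and the sign-free $p$-differential on $C\otimes D$, is essentially Touz\'e's construction. The paper gives no proof of this proposition, simply citing Touz\'e, and your four-case chain-map verification is exactly the computation one would supply.
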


Iterating Proposition \ref{prop:Touze-2.4}, one gets a morphism of complexes $(\bsb_{p^r}(1)_{[1,0]})^{\otimes d} \to (\bsb_{p^r}(1)^{\otimes d})_{[1,0]}$. The product morphism $\bsb_{p^r}(1)^{\otimes d} = \bss^{p^r}(\Sha_1 \otimes -)^{\otimes d} \to \bss^{d p^r}(\Sha_1 \otimes -) = \bsb_{dp^r}(1)$ is a morphism of $p$-complexes by Lemma \ref{lemma:convolution-components}\eqref{item:phiderivation}, and hence restricts to a morphism $(\bsb_{p^r}(1)^{\otimes d})_{[1,0]} \to \bsb_{dp^r}(1)_{[1,0]}$ between the contracted complexes. Composing these morphisms, one gets a chain map
	\begin{equation} \label{eq:chain-map-h}
	h: T(\bss^{p^{r-1}},1)^{\otimes d} = (\bsb_{p^r}(1)_{[1,0]})^{\otimes d} \to \bsb_{dp^r}(1)_{[1,0]} = T(\bss^{dp^{r-1}},1).
	\end{equation}

\subsection{The case \texorpdfstring{$j=1$}{j=1} of Theorem \ref{theorem:FFSS-4.5}} \label{subsec:proof-j=1}

The proof of Theorem \ref{theorem:FFSS-4.5} in the case $j=1$ is by induction on $d$. Assume that $d \geq 1$, and that the theorem is true for all $1 \leq d' < d$.

We first consider the hypercohomology spectral sequence $\rmI(\Gamma_\ell^{d(r)},T(\bss^{dp^{r-1}},1))$. The complex $T(\bss^{dp^{r-1}},1)$ consists of injectives, so $\rmI_1^{n,m} = 0$ for $m > 0$, and hence
	\begin{equation} \label{eq:I-collapse}
	\bbExt_{\bsp}^n(\Gamma_\ell^{d(r)},T(\bss^{dp^{r-1}},1)) \cong \Hom_{\bsp}(\Gamma_\ell^{d(r)}, T(\bss^{dp^{r-1}},1)^n) \cong \begin{cases} 
	\left[S^d(E_1^{(r-1)}) \right]^n & \text{if $\ell = 0$,} \\
	0 & \text{if $\ell = 1$.}
	\end{cases}
	\end{equation}
Here $E_1 = \bigoplus_{0 \leq i < p} k\subgrp{2i}$ is the graded superspace defined just before Theorem \ref{thm:Ext(Ir,Ir)-vector-space}, and $[-]^m$ denotes the component of $\Z$-degree $m$ of a graded superspace. The last identification in \eqref{eq:I-collapse} is by reasoning similar to that for \eqref{eq:Hom-iso-Shar-twist} and \eqref{eq:Hom-iso-Er}. One can check that \eqref{eq:I-collapse} is multiplicative in the sense that the morphism
	\begin{multline*}
	[E_1^{(r-1)}]^{\otimes d} \cong \bbExt_{\bsp}^\bullet(\bsirzero,T(\bss^{p^{r-1}},1))^{\otimes d} \cong \bbExt_{\bsp}^\bullet(\Gamma_0^{d(r)},T(\bss^{p^{r-1}},1)^{\otimes d}) \\
	\xrightarrow{h^*} \bbExt_{\bsp}^\bullet(\Gamma_\ell^{d(r)},T(\bss^{dp^{r-1}},1)) \cong S^d(E_1^{(r-1)})
	\end{multline*}
induced by the chain map \eqref{eq:chain-map-h} is a surjection.

Next consider the second hypercohomology spectral sequence $\rmII(\Gamma_\ell^{d(r)},T(\bss^{dp^{r-1}},1))$. Applying Corollary \ref{cor:T(Sn,r)-cohomology}, \cite[Theorem 3.4.2]{Drupieski:2016}, and the fact that the exponential superfunctor $\Gamma \circ \bsirell$ is zero in polynomial degrees not divisible by $p^r$, one gets
	\begin{multline} \label{eq:II2-iso}
	\rmII_2^{n,m} \cong \\
	\begin{cases} 
	\left[\Ext_{\bsp}^\bullet(\Gamma_\ell^{d-s(r)},S_0^{(d-s)p^{r-1}(1)}) \otimes \Ext_{\bsp}^\bullet(\Gamma_\ell^{s(r)},\Lambda_1^{sp^{r-1}(1)}) \right]^n & \text{if $m = s p^{r-1} (p-1)$, $0 \leq s \leq d$,} \\
	0 & \text{if $m \not\equiv 0 \mod p^{r-1}(p-1)$.}
	\end{cases}
	\end{multline}
This implies that the first nontrivial differential is $d_{p^{r-1}(p-1)+1}$, and $\rmII_2^{n,m} \cong \rmII_{p^{r-1}(p-1)+1}^{n,m}$.

Set $q = p^{r-1}(p-1)$, and suppose for the moment that $d = 1$. Then the only nonzero rows of $\rmII(\bsirell,T(\bss^{p^{r-1}},1))$ are $m = 0$ and $m = q$, and the only nontrivial differential is
	\begin{equation} \label{eq:nontrivial-differential}
	\Ext_{\bsp}^n(\bsirell,\Lambda_1^{p^{r-1}(1)}) \cong \rmII_{q+1}^{n,q} \xrightarrow{d_{q+1}} \rmII_{q+1}^{n+q+1,0} \cong \Ext_{\bsp}^{n+q+1}(\bsirell,S_0^{p^{r-1}(1)}).
	\end{equation}
One has $\rmII_\infty \cong E_1^{(r-1)} \cong \bigoplus_{0 \leq i < p} k\subgrp{2ip^{r-1}}$ as a graded superspace by \eqref{eq:I-collapse}, so a straightforward induction argument on the cohomological degree shows that \eqref{eq:nontrivial-differential} is an isomorphism for all $n \geq 0$. Then we can interpret \eqref{eq:nontrivial-differential} as a map of graded superspaces
	\[
	f_\ell: \Wbar_{1,\ell} \to V_{1,\ell}
	\]
that increases $\Z$-degrees by $q+1$, with $\ker(f_\ell) = 0$, and
	\[
	\coker(f_\ell) \cong C_{1,\ell} := \begin{cases}
	\left[ \bigoplus_{0 \leq s < 2p^r} \Ext_{\bsp}^s(\bsirell,S_0^{p^{r-1}(1)}) \right] \cong E_1^{(r-1)} & \text{if $\ell = 0$,} \\
	0 & \text{if $\ell = 1$.}
	\end{cases}
	\]
Note that Theorem \ref{D2016-4.5.1} implies that $V_{1,\ell} \cong \rmII_2^{\bullet,0}$ (resp.\ $\Wbar_{1,\ell} \cong \rmII_2^{\bullet,q}$) is concentrated in terms whose total degree is congruent to $\ell \mod 2$ (resp.\ congruent to $(\ell+1) \mod 2$).

Now suppose $d > 1$, and set $Q_d^s(f_\ell) = S^{d-s}(V_{1,\ell}) \otimes \Lambda^s(\Wbar_{1,\ell})$. Then the map $f_\ell : \Wbar_{1,\ell} \to V_{1,\ell}$ defines on $Q(f_\ell)$ a Koszul differential $\kappa(f_\ell)$ (of cohomological degree $-1$), and makes $Q(f_\ell)$ into a generalized Koszul complex in the sense of \cite[\S4]{Franjou:1999}.\footnote{For these generalized Koszul complexes, we follow the notational conventions of \cite{Franjou:1999}, rather than the indexing used in \eqref{eq:Koszul-complex} for ``the'' Koszul complex. This is inconsistent of us, but we are trying to follow the conventions of \cite{Franjou:1999} except where changes seem absolutely necessary.} Since $\ker(f_\ell) = 0$, we get from \cite[Lemma 4.3]{Franjou:1999} that $\opH^i(Q(f_\ell)) = 0$ for $i > 0$, and the inclusion $C_{1,\ell} \subseteq V_{1,\ell}$ induces an algebra isomorphism
	\begin{equation} \label{eq:generalized-Koszul-homology}
	\opH^0(Q(f_\ell)) \cong S(C_{1,\ell}),
	\end{equation}
which is compatible with the `internal' $\Z$-grading on $Q(f_\ell)$ inherited from the (cohomological) $\Z$-gradings on $V_{1,\ell}$ and $\Wbar_{1,\ell}$.

Returning to the spectral sequence $\rmII(\Gamma_\ell^{d(r)},T(\bss^{dp^{r-1}},1))$, and continuing to write $q = p^{r-1}(p-1)$, we claim that there is a commutative diagram
	\begin{equation} \label{eq:Q-II-diagram}
	\vcenter{\xymatrix{
	Q_d^d(f_\ell)^\bullet \ar@{->}[r]^-{\kappa_d} \ar@{->}[d] & Q_d^{d-1}(f_\ell)^\bullet \ar@{->}[r]^-{\kappa_{d-1}} \ar@{->}[d]^{\cong} & \cdots \ar@{->}[r]^-{\kappa_3} & Q_d^2(f_\ell)^\bullet \ar@{->}[r]^-{\kappa_2} \ar@{->}[d]^{\cong} & Q_d^1(f_\ell)^\bullet \ar@{->}[r]^-{\kappa_1} \ar@{->}[d]^{\cong} & Q_d^0(f_\ell)^\bullet \ar@{->}[d] \\
	\rmII_{q+1}^{\bullet,dq} \ar@{->}[r]^-{d_{q+1}} & \rmII_{q+1}^{\bullet,(d-1)q} \ar@{->}[r]^-{d_{q+1}} & \cdots \ar@{->}[r]^-{d_{q+1}} & \rmII_{q+1}^{\bullet,2q} \ar@{->}[r]^-{d_{q+1}} & \rmII_{q+1}^{\bullet,q} \ar@{->}[r]^-{d_{q+1}} & \rmII_{q+1}^{\bullet,0}.
	}}
	\end{equation}
Here $Q(f_\ell)^m$ denotes the component of internal degree $m$ in $Q(f_\ell)$. The vertical arrows in \eqref{eq:Q-II-diagram} are induced by the cup product maps
	\[
	S(V_{1,\ell}) \otimes \Lambda(\Wbar_{1,\ell}) \to \Ext_{\bsp}^\bullet(\Gamma_\ell^{*(r)},S_0^{*p^{r-1}(1)}) \otimes \Ext_{\bsp}^\bullet(\Gamma_\ell^{*(r)},\Lambda_1^{*p^{r-1}(1)}) \cong \Ext_{\bsp}^\bullet(\Gamma_\ell^{(r)},\bss^{(1)}).
	\]
By induction on $d$, the cup product $Q_d^s(f_\ell)^\bullet \to \rmII_{q+1}^{\bullet,s}$ is an isomorphism for $0 < s < d$. The map $\kappa_s: Q_d^s(f_\ell) \to Q_d^{s-1}(f_\ell)$ is defined as follows: Let $v_1,\ldots,v_{d-s} \in V_{1,\ell}$, and $w_1,\ldots,w_s \in \Wbar_{1,\ell}$. Then
	\[
	\kappa_s(v_1 \cdots v_{d-s} \otimes w_1 \wedge \cdots \wedge w_s) = \sum_{i=1}^s (-1)^{\tau_s(i)} v_1 \cdots v_{d-s} \cdot f_\ell(w_i) \otimes w_1 \wedge \cdots \wedge \wh{w}_i \wedge \cdots w_s,
	\]
where $\tau_s(i) = i-1$ if $\ell = 0$, and $\tau_s(i) = (d-s)+(i-1)$ if $\ell = 1$. So $\kappa_s$ is equal to the Koszul differential $\kappa(f_\ell)$ if $\ell = 0$, but $\kappa_s = (-1)^{d-s} \cdot \kappa(f_\ell)_s : Q_d^s(f_\ell) \to Q_d^{s-1}(f_\ell)$ if $\ell = 1$. The extra factor of $(-1)^{d-s}$ does not affect the homology of the top row of \eqref{eq:Q-II-diagram}. To see that this definition for $\kappa_s$ makes the diagram \eqref{eq:Q-II-diagram} commute, one can first argue as in Corollary \ref{cor:FFSS-4.2}, using \eqref{eq:nontrivial-differential}, to see that the only nontrivial differential in $\rmII(\Gamma_\ell^{d(r)},T(\bss^{p^{r-1}},1)^{\otimes d}) \cong \rmII(\Gamma_\ell^{d(r)},T(\bss^{p^{r-1}},1))^{\otimes d}$ is $d_{p^{r-1}(p-1)+1} = d_{q+1}$, with its action on homogeneous tensors given by the formula \eqref{eq:differential-derivation}. Next, the chain map \eqref{eq:chain-map-h} defines a morphism of spectral sequences $\rmII(\Gamma_\ell^{d(r)},T(\bss^{p^{r-1}},1)^{\otimes d}) \to \rmII(\Gamma_\ell^{d(r)},T(\bss^{dp^{r-1}},1))$. Then commutativity of \eqref{eq:Q-II-diagram} follows from considering the image of $v_1 \otimes \cdots \otimes v_{d-s} \otimes w_1 \otimes \cdots \otimes w_s$ under this morphism, and using \cite[Lemma 3.4.1]{Drupieski:2016} to see that the cup product relation $w \cdot v = v \cdot w$ if $\ell = 0$ (resp.\ $w \cdot v = - v \cdot w$ if $\ell = 1$) holds in $\Ext_{\bsp}^\bullet(\Gamma_\ell^{*(r)},\bss^{*(1)})$ for $v \in V_{1,\ell}$ and $w \in \Wbar_{1,\ell}$.

Commutativity of the right-most square in \eqref{eq:Q-II-diagram} gives rise to a sequence of maps
	\begin{equation} \label{eq:FFSS-lemma-4.8-map}
	S^d(C_{1,\ell}) = \opH^0(Q_d(f_\ell)) \to \rmII_{q+2}^{\bullet,0} \twoheadrightarrow \rmII_{\infty}^{\bullet,0} \hookrightarrow \bbExt_{\bsp}^\bullet(\Gamma_\ell^{d(r)},T(\bss^{dp^{r-1}},1)).
	\end{equation}

\begin{lemma}[cf.\ {\cite[Lemma 4.8]{Franjou:1999}}] \label{lemma:FFSS-4.8}
The composite homomorphism \eqref{eq:FFSS-lemma-4.8-map} is an isomorphism.
\end{lemma}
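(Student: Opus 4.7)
The case $\ell = 1$ is immediate. By \eqref{eq:I-collapse} we have $\bbExt_{\bsp}^\bullet(\Gamma_1^{d(r)}, T(\bss^{dp^{r-1}},1)) = 0$, so in particular $\rmII_\infty^{\bullet,0} = 0$, while $C_{1,1} = 0$ forces $S^d(C_{1,1}) = 0$ for $d \geq 1$. Both sides of the composite \eqref{eq:FFSS-lemma-4.8-map} vanish.

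Assume henceforth $\ell = 0$. I plan to imitate the argument of \cite[Lemma 4.8]{Franjou:1999}. By the inductive hypothesis applied to $d' = d-s$ and $d' = s$ (both strictly less than $d$ when $0 < s < d$), the interior vertical maps in \eqref{eq:Q-II-diagram} are isomorphisms. Since $f_0$ is injective, \cite[Lemma 4.3]{Franjou:1999} gives $\opH^s(Q_d(f_0)) = 0$ for $s > 0$ and $\opH^0(Q_d(f_0)) \cong S^d(C_{1,0})$ as in \eqref{eq:generalized-Koszul-homology}. Transporting this vanishing across the interior isomorphisms shows $\rmII_\infty^{\bullet,sq} = 0$ for $0 < s < d$, and identifies the composite $S^d(C_{1,0}) \to \rmII_{q+2}^{\bullet,0} \twoheadrightarrow \rmII_\infty^{\bullet,0}$ as the map on cokernels induced by the exterior vertical arrow $S^d V_{1,0} \to \rmII_{q+1}^{\bullet,0}$.

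To handle the boundary positions $s = 0$ and $s = d$, and to conclude that the composite into $\bbExt$ is an isomorphism, I will exploit the chain map $h$ from \eqref{eq:chain-map-h} together with Proposition \ref{prop:FFSS-4.1}. The second hypercohomology spectral sequence for $T(\bss^{p^{r-1}},1)^{\otimes d}$ is the $d$-fold tensor product of the $d = 1$ spectral sequence, whose $\rmII_\infty$ page is concentrated in row zero with value $[E_1^{(r-1)}]^{\otimes d}$. The morphism of spectral sequences induced by $h$ realizes on the $\bbExt$ level the surjection $[E_1^{(r-1)}]^{\otimes d} \twoheadrightarrow S^d(E_1^{(r-1)})$ from the discussion preceding the lemma. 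Compatibility with the edge maps of the target spectral sequence forces the image of $[E_1^{(r-1)}]^{\otimes d}$ in $\rmII_\infty^{\bullet,0}(\text{target})$ to equal the full $\bbExt^\bullet(\Gamma_0^{d(r)}, T(\bss^{dp^{r-1}},1))$, which has the same $k$-dimension as $S^d(E_1^{(r-1)}) \cong S^d(C_{1,0})$. Combined with the identification above of $S^d(C_{1,0}) \to \rmII_\infty^{\bullet,0}$ as the map on cokernels, this will force the composite \eqref{eq:FFSS-lemma-4.8-map} to be an isomorphism and simultaneously yield $\rmII_\infty^{\bullet,dq} = 0$. The main obstacle is verifying that the morphism of spectral sequences induced by $h$ realizes on the row $t = 0$ of $\rmII_\infty$ the symmetrization surjection $[E_1^{(r-1)}]^{\otimes d} \twoheadrightarrow S^d(E_1^{(r-1)})$; this requires tracking the compatibility of the edge homomorphisms with the cup product structure defining the vertical arrows in \eqref{eq:Q-II-diagram}.
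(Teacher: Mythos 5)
Your outline contains the key ingredients (the chain map $h$, the $d=1$ base case, dimension comparison), but the route is considerably more roundabout than the paper's, and one step has a genuine gap. The assertion that ``transporting this vanishing across the interior isomorphisms shows $\rmII_\infty^{\bullet,sq} = 0$ for $0 < s < d$'' does not follow: to compute the cohomology of the bottom row of \eqref{eq:Q-II-diagram} at position $s = 1$ (resp.\ $s = d-1$) by comparison with the top row, one needs the vertical arrow at $s = 0$ (resp.\ $s = d$) to be injective (resp.\ surjective), and that arrow is precisely the cup product map whose bijectivity the lemma is being used to help establish. In the paper, the vanishing of $\rmII_{q+2}^{n,sq}$ (and hence of $\rmII_\infty^{n,sq}$) near the boundary rows is only obtained \emph{after} the lemma, during the induction on $t$ (statements $1_{t,\ell}$--$3_{t,\ell}$). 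Likewise Corollary \ref{cor:limit-zero} is not available here, since the paper derives it \emph{from} Lemma~\ref{lemma:FFSS-4.8}. So as written, the first half of your argument is circular.

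The paper's proof sidesteps the spectral sequence entirely. It identifies \eqref{eq:FFSS-lemma-4.8-map} with the composite \eqref{eq:FFSS-Lemma4.8-composite}: the inclusion $S^d(C_{1,\ell}) \hookrightarrow S^d(V_{1,\ell})$, followed by the cup product into $\Ext_{\bsp}^\bullet(\Gamma_\ell^{d(r)},S_0^{dp^{r-1}(1)})$, followed by the hypercohomology map induced by the coaugmentation $\opH^0(T(\bss^{dp^{r-1}},1)) \hookrightarrow T(\bss^{dp^{r-1}},1)$. For $d=1$ this is an isomorphism by the explicit description of $\rmII(\bsirell,T(\bss^{p^{r-1}},1))$ obtained in the paragraph containing \eqref{eq:nontrivial-differential}. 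For $d > 1$, the chain map $h$ makes \eqref{eq:FFSS-Lemma4.8-composite} compatible with $d$-fold products, so the surjection $[E_1^{(r-1)}]^{\otimes d} \cong [C_{1,0}]^{\otimes d} \twoheadrightarrow \bbExt_{\bsp}^\bullet(\Gamma_0^{d(r)},T(\bss^{dp^{r-1}},1)) \cong S^d(E_1^{(r-1)})$ forces surjectivity of \eqref{eq:FFSS-Lemma4.8-composite}, and dimension comparison finishes. The ``main obstacle'' you flag at the end about tracking edge homomorphisms evaporates once you work directly with the composite \eqref{eq:FFSS-Lemma4.8-composite} rather than trying to control the individual terms $\rmII_\infty^{\bullet,sq}$.
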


\begin{proof}
The homomorphism in question is induced by the composition
	\begin{equation} \label{eq:FFSS-Lemma4.8-composite}
	S^d(C_{1,\ell}) \hookrightarrow S^d(V_{1,\ell}) \to \Ext_{\bsp}^\bullet(\Gamma_\ell^{d(r)},S_0^{dp^{r-1}(1)}) \to \bbExt_{\bsp}^\bullet(\Gamma_\ell^{d(r)},T(\bss^{dp^{r-1}},1)),
	\end{equation}
in which the second arrow is the cup product map, and the third arrow is the map in (hyper)co\-homology induced by the morphism of cochain complexes $\opH^0(T(\bss^{dp^{r-1}},1)) \hookrightarrow T(\bss^{dp^{r-1}},1)$. For $d=1$, the composite \eqref{eq:FFSS-Lemma4.8-composite} is an isomorphism by the calculations in the paragraph containing \eqref{eq:nontrivial-differential}, where we completely described the spectral sequence $\rmII(\bsirell,T(\bss^{p^{r-1}},1))$. For $d > 1$, surjectivity of \eqref{eq:FFSS-Lemma4.8-composite} then follows by multiplicativity using the chain map \eqref{eq:chain-map-h}. This implies by dimension comparison that the composite \eqref{eq:FFSS-lemma-4.8-map} is an isomorphism.
\end{proof}

\begin{corollary} \label{cor:limit-zero}
In the spectral sequence $\rmII(\Gamma_\ell^{d(r)},T(\bss^{dp^{r-1}},1))$, one has $\rmII_{\infty}^{n,m} = 0$ for $m > 0$.
\end{corollary}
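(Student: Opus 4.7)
The plan is to deduce this corollary from Lemma \ref{lemma:FFSS-4.8} by a straightforward dimension count. First I would note that the last arrow in the composite \eqref{eq:FFSS-lemma-4.8-map} is the canonical inclusion of the bottom filtration piece
$$\rmII_\infty^{\bullet,0} \hookrightarrow \bbExt_{\bsp}^\bullet(\Gamma_\ell^{d(r)},T(\bss^{dp^{r-1}},1)).$$
Since Lemma \ref{lemma:FFSS-4.8} asserts that the full composite is an isomorphism, and hence in particular surjective, this inclusion must be surjective, and therefore an isomorphism itself.

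Next I would invoke convergence of the second hypercohomology spectral sequence: for every total cohomological degree $N$, the group $\bbExt^N(\Gamma_\ell^{d(r)},T(\bss^{dp^{r-1}},1))$ carries a finite filtration whose successive quotients are the $\rmII_\infty^{n,m}$ with $n+m=N$. All these terms are finite-dimensional in each bidegree, being subquotients of the $\rmII_2$-page described in \eqref{eq:II2-iso}, which is assembled from Ext groups between strict polynomial superfunctors of fixed polynomial degree. Moreover, since $T(\bss^{dp^{r-1}},1)$ is a bounded complex, only finitely many bidegrees $(n,m)$ with $n+m=N$ can contribute. Putting these observations together,
$$\dim \bbExt^N = \sum_{n+m=N} \dim \rmII_\infty^{n,m} \geq \dim \rmII_\infty^{N,0} = \dim \bbExt^N,$$
so equality holds throughout, forcing $\rmII_\infty^{n,m} = 0$ for every $m > 0$, as required.

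I do not anticipate any real obstacle: once Lemma \ref{lemma:FFSS-4.8} is in hand, the argument is pure bookkeeping. The only point worth verifying carefully is that the last arrow of \eqref{eq:FFSS-lemma-4.8-map} is genuinely the inclusion of the $(\bullet,0)$-column of $\rmII_\infty$ into the abutment, but this is built into the filtration conventions for the second hypercohomology spectral sequence used consistently in \cite{Drupieski:2016} and throughout the present paper.
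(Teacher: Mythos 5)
Your proposal is correct and takes essentially the same approach as the paper: deduce from Lemma \ref{lemma:FFSS-4.8} that the inclusion $\rmII_\infty^{\bullet,0} \hookrightarrow \bbExt_{\bsp}^\bullet(\Gamma_\ell^{d(r)},T(\bss^{dp^{r-1}},1))$ is forced to be surjective, hence an isomorphism, and then conclude that the remaining filtration quotients vanish. The paper reads this off directly from the filtration structure (the bottom piece filling the abutment forces the higher associated-graded pieces to be zero), while you phrase the same fact as a dimension count, which is a valid alternative packaging requiring the finite-dimensionality and boundedness observations you supply.
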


\begin{proof}
By the lemma, the inclusion $\rmII_{\infty}^{\bullet,0} \hookrightarrow \bbExt_{\bsp}^\bullet(\Gamma_\ell^{d(r)},T(\bss^{dp^{r-1}},1))$ is also a surjection, and hence an isomorphism. Then the $\rmII_{\infty}$-page must be concentrated in the row $m=0$.
\end{proof}

Now we argue by induction on $t$ to establish the following statements:
\begin{itemize}
\item[$1_{t,\ell}$.] \label{item:1t} The cup product map $[S^d(V_{1,\ell})]^t \to \Ext_{\bsp}^t(\Gamma_\ell^{d(r)},S_0^{dp^{r-1}(1)})$ is an isomorphism.

\item[$\ol{1}_{t,\ell}$.] \label{item:1tbar} The cup product map $[S^d(\Vbar_{1,\ell})]^t \to \Ext_{\bsp}^t(\Gamma_\ell^{d(r)},S_1^{dp^{r-1}(1)})$ is an isomorphism.

\item[$2_{t,\ell}$.] \label{item:2t} The cup product map $[\Lambda^d(W_{1,\ell})]^t \to \Ext_{\bsp}^t(\Gamma_\ell^{d(r)},\Lambda_0^{dp^{r-1}(1)})$ is an isomorphism.

\item[$\ol{2}_{t,\ell}$.] \label{item:2tbar} The cup product map $[\Lambda^d(\Wbar_{1,\ell})]^t \to \Ext_{\bsp}^t(\Gamma_\ell^{d(r)},\Lambda_1^{dp^{r-1}(1)})$ is an isomorphism.

\item[$3_{t,\ell}$.] \label{item:3t} In the spectral sequence $\rmII(\Gamma_\ell^{d(r)},T(\bss^{dp^{r-1}},1))$, one has $\rmII_{q+2}^{n,m} = \rmII_{\infty}^{n,m}$ for all $n \leq t-q-1$.
\end{itemize}
So let $t \geq 0$, and assume by way of induction that $1_{t',\ell}$, $\ol{1}_{t',\ell}$, $2_{t',\ell}$, $\ol{2}_{t',\ell}$, and $3_{t',\ell}$ are true for all $t' < t$. We will show that $1_{t,\ell}$, $\ol{1}_{t,\ell}$, $2_{t,\ell}$, $\ol{2}_{t,\ell}$ and $3_{t,\ell}$ are also true. Using the conjugation action of $\bsPi$, one can see that $1_{t,\ell}$ and $\ol{1}_{t,\ell+1}$ (resp., $2_{t,\ell}$ and $\ol{2}_{t,\ell+1}$) are logically equivalent, so it will suffice to establish one statement from each pair.

First, from \eqref{eq:Q-II-diagram} we get commutative diagrams of the form
	\begin{equation} \label{eq:FFSS-4.9.1}
	\vcenter{\xymatrix{
	Q_d^{s+1}(f_\ell)^{n-q-1} \ar@{->}[r]^-{\kappa_{s+1}} \ar@{->}[d] & Q_d^s(f_\ell)^n \ar@{->}[r]^-{\kappa_s} \ar@{->}[d] & Q_d^{s-1}(f_\ell)^{n+q+1} \ar@{->}[d] \\
	\rmII_{q+1}^{n-q-1,(s+1)q} \ar@{->}[r]^-{d_{q+1}} & \rmII_{q+1}^{n,sq} \ar@{->}[r]^-{d_{q+1}} & \rmII_{q+1}^{n+q+1,(s-1)q}.
	}}
	\end{equation}
The three vertical maps are all isomorphisms if either $1 < s < d-1$ (by the induction hypothesis on $d$) or if $n < t-q-1$ (by the induction hypothesis on $t$). This implies that
	\begin{equation} \label{eq:FFSS-4.9.2a}
	[\opH^s(Q_d(f_\ell))]^n \cong \rmII_{q+2}^{n,sq} \qquad \text{if $1 < s < d-1$ or if $n < t - q - 1$.}
	\end{equation}
In particular, since $\opH^s(Q_d(f_\ell)) = 0$ for $s > 0$, this implies that $\rmII_{q+2}^{n,sq} = 0$ (and hence $\rmII_{q+2}^{n,sq} = \rmII_{\infty}^{n,sq}$) if either $1 < s < d-1$, or if $s \geq 1$ and $n < t-q-1$.

%Note that the map on cohomology $[\opH^s(Q_d(f_\ell))]^n \to \rmII_{q+2}^{n,sq}$ is a monomorphism for all $n$ and $s$, either because $\opH^s(Q_d(f_\ell)) = 0$, while for $s=0$ the map $S^d(C_{1,\ell}) = \opH^0(Q_d(f_\ell)) \to \rmII_{q+2}^{\bullet,0}$ is an injection by Lemma \ref{lemma:FFSS-4.8}.

We claim that $\rmII_{q+2}^{t,0} = \rmII_{\infty}^{t,0}$ and $\rmII_{q+2}^{t-q-1,q} = \rmII_{\infty}^{t-q-1,q}$. Indeed, from the $(q+2)$-page onward, there are no nontrivial differentials originating at either $\rmII_{q+2}^{t,0}$ or $\rmII_{q+2}^{t-q-1,q}$, because $\rmII^{n,m} = 0$ for $m < 0$. On the other hand, the only differentials that could hit $\rmII_{q+2}^{t,0}$ come from terms of the form $\rmII^{n,m}$ with $n \leq t-q-2$, but the inductive assumption $3_{t-1,\ell}$ implies that there are no nontrivial differentials from terms of that form. Similarly, there are no nontrivial differentials that can hit $\rmII_{q+2}^{t-q-1,q}$. Thus, $\rmII_{q+2}^{t,0} = \rmII_{\infty}^{t,0}$ and $\rmII_{q+2}^{t-q-1,q} = \rmII_{\infty}^{t-q-1,q}$. Combined with Lemma \ref{lemma:FFSS-4.8}, the first of these two equalities implies that the map $[S^d(C_{1,\ell})]^t \to \rmII_{q+2}^{t,0}$ appearing in \eqref{eq:FFSS-lemma-4.8-map} is an isomorphism. And together with Corollary \ref{cor:limit-zero}, the second equality implies that $\rmII_{q+2}^{t-q-1,q} = 0$.

Now consider the following diagram obtained by extending \eqref{eq:Q-II-diagram} to the right:
	\begin{equation} \label{eq:FFSS-4.9.1-extended}
	\vcenter{\xymatrix{
	Q_d^2(f_\ell)^{t-2q-2} \ar@{->}[r]^-{\kappa_2} \ar@{->}[d]^{\alpha} & Q_d^1(f_\ell)^{t-q-1} \ar@{->}[r]^-{\kappa_1} \ar@{->}[d]^{\beta} & Q_d^0(f_\ell)^{t} \ar@{->>}[r]^-{\text{can}} \ar@{->}[d]^{\gamma} & S^d(C_{1,\ell})^t \ar@{->}[r] \ar@{->}[d]^{\delta} & 0 \ar@{->}[d] \\
	\rmII_{q+1}^{t-2q-2,2q} \ar@{->}[r]^-{d_{q+1}} & \rmII_{q+1}^{t-q-1,q} \ar@{->}[r]^-{d_{q+1}} & \rmII_{q+1}^{t,0} \ar@{->}[r]^-{\text{can}} & \rmII_{q+2}^{t,0} \ar@{->}[r] & 0
	}}
	\end{equation}
The horizontal arrows labeled ``can'' are the canonical cokernel maps. The induction hypothesis on $t$ implies that the vertical maps labeled $\alpha$ and $\beta$ are isomorphisms, while $\delta$ was observed to be an isomorphism in the previous paragraph. The top row of \eqref{eq:FFSS-4.9.1-extended} is exact by \eqref{eq:generalized-Koszul-homology}, while exactness of the bottom row follows from the observation that $\rmII_{q+2}^{t-q-1,q} = 0$. Then by the Five Lemma, the vertical map $\gamma$ is an isomorphism. In other words, the statement $1_{t,\ell}$ is true.

Next, taking $n = t-q-1$ in \eqref{eq:FFSS-4.9.1}, the induction hypotheses on $d$ and $t$ (and the statement $1_t$) imply that the vertical maps are all isomorphisms, and hence $[\opH^s(Q_d(f_\ell))]^n \cong \rmII_{q+2}^{n,sq}$. Combined with \eqref{eq:FFSS-4.9.2a}, this gives $[\opH^s(Q_d(f_\ell))]^n \cong \rmII_{q+2}^{n,sq}$ for all $s$ provided that $n \leq t - q - 1$. In particular, if $s \geq 1$ and $n \leq t-q-1$, then $\rmII_{q+2}^{n,sq} = 0$, and hence $\rmII_{q+2}^{n,sq} = \rmII_{\infty}^{n,sq}$. Since also $\rmII_{q+2}^{n,0} = \rmII_{\infty}^{n,0}$ for all $n$, by Lemma \ref{lemma:FFSS-4.8}, this implies that statement $3_{t,\ell}$ is true.

Now to show that $2_{t,\ell}$ is true, we consider the spectral sequence $\rmI(\Gamma_\ell^{d(r)},C)$, with $C = \Kz_{dp^{r-1}}^{\bullet(1)}$ if $\ell = 0$, and $C = \Kz_{dp^{r-1}}^{\bullet(1)}\subgrp{dp^{r-1}}$ if $\ell = 1$. The shift in cohomological degree when $\ell =1$ is to ensure that certain signs (that arise from the total degrees of elements in the spectral sequence) work out appropriately. We'll describe first what happens when $\ell = 0$, and then indicate what changes are required when $\ell = 1$.

Set $q = p^{r-1}$. First, arguing as for \eqref{eq:II2-iso}, the $\rmI_1$-page of $\rmI(\Gamma_0^{d(r)},\Kz_{dq}^{\bullet(1)})$ has the form
	\begin{equation} \label{eq:I1-iso}
	\rmI_1^{-m,n} \cong
	\begin{cases} 
	\left[\Ext_{\bsp}^\bullet(\Gamma_0^{d-s(r)},S_0^{(d-s)q(1)}) \otimes \Ext_{\bsp}^\bullet(\Gamma_0^{s(r)},\Lambda_0^{sq(1)}) \right]^n & \text{if $m = s q$, $0 \leq s \leq d$,} \\
	0 & \text{if $m \not\equiv 0 \mod q$.}
	\end{cases}
	\end{equation}
In particular, the first nontrivial differential of $\rmI(\Gamma_0^{d(r)},\Kz_{dq}^{\bullet(1)})$ is $d_q$, and by induction on $d$ one has $\rmI_1^{-sq,n} \cong \left[ S^{d-s}(V_{1,0}) \otimes \Lambda^s(W_{1,0}) \right]^n$ for $0 < s < d$ (and for $s = 0$, the isomorphism holds for $n \leq t$, by $1_{t,0}$ and the induction hypothesis on $t$). Next, arguing as for \eqref{eq:Q-II-diagram}, one can show that the generalized Koszul complex $Q(\theta)$ for the map $\theta$ of \eqref{eq:theta} fits into a commutative diagram
	\begin{equation} \label{eq:Q-I-diagram}
	\vcenter{\xymatrix{
	Q_d^d(\theta)^\bullet \ar@{->}[r]^-{\kappa(\theta)} \ar@{->}[d] & Q_d^{d-1}(\theta)^\bullet \ar@{->}[r]^-{\kappa(\theta)} \ar@{->}[d]^{\cong} & \cdots \ar@{->}[r]^-{\kappa(\theta)} & Q_d^2(\theta)^\bullet \ar@{->}[r]^-{\kappa(\theta)} \ar@{->}[d]^{\cong} & Q_d^1(\theta)^\bullet \ar@{->}[r]^-{\kappa(\theta)} \ar@{->}[d]^{\cong} & Q_d^0(\theta)^\bullet \ar@{->}[d] \\
	\rmI_q^{-dq,\bullet} \ar@{->}[r]^-{d_q} & \rmI_q^{-(d-1)q,\bullet} \ar@{->}[r]^-{d_q} & \cdots \ar@{->}[r]^-{d_q} & \rmI_q^{-2q,\bullet} \ar@{->}[r]^-{d_q} & \rmI_q^{-q,\bullet} \ar@{->}[r]^-{d_q} & \rmI_q^{0,\bullet}.
	}}
	\end{equation}
It is in verifying the commutativity of \eqref{eq:Q-I-diagram} that the choice of indexing for the Koszul complex becomes important: The indexing given in \eqref{eq:Koszul-complex} ensures that $V_{1,0}$ (resp. $W_{1,0}$) is concentrated in terms of even (resp.\ odd) total degree in the $\rmI_1$-page of $\rmI(\bsirzero,\Kz_q^{\bullet(1)})$, which is relevant for applying the formula \eqref{eq:differential-derivation}.\footnote{A reindexing of the Koszul complex similar to that employed here would also seem to be necessary in the classical situation of \cite[(4.9.5)]{Franjou:1999}, to get the signs to work out correctly.} Next, in lieu of applying the chain map \eqref{eq:chain-map-h}, one considers the morphism of spectral sequences corresponding to the product map $(\Kz_q^{\bullet(1)})^{\otimes d} \to \Kz_{dq}^{\bullet(1)}$. Finally, arguing as for \cite[Lemma 3.4.1]{Drupieski:2016}, one can check $v \in V_{1,0}$ and $w \in W_{1,0}$ that the cup product relation $w \cdot v = v \cdot w$ holds in $\Ext_{\bsp}^\bullet(\Gamma_0^{*(r)},\Kz_*^{(1)}) = \Ext_{\bsp}^\bullet(\Gamma_0^{*(r)},S_0^{*(1)} \otimes \Lambda_0^{*(1)})$.

We claim that $\rmI_{q+1}^{-dq,t} = \rmI_{\infty}^{-dq,t} = 0$. Indeed, if $k \geq q+1$ and if $d_k: \rmI_k^{-dq,t} \to \rmI_k^{-dq+k,t-k+1}$ is a nontrivial differential, then \eqref{eq:I1-iso} implies that $k = jq$ for some $2 \leq j \leq d$. But for $j \geq 2$, the induction hypotheses on $d$ and $t$ imply that the cup product maps are isomorphisms
	\[
	Q_d^{(d-j)}(\theta)^{t-jq+1} \cong \rmI_q^{-(d-j)q,t-jq+1} \quad \text{and} \quad Q_d^{(d-j-1)}(\theta)^{t-(j+1)q+2} \cong \rmI_q^{-(d-j-1)q,t-(j+1)q+2}.
	\]
The generalized Koszul complex $Q_d^\bullet(\theta)$ is exact because $\theta$ is an isomorphism. Then commutativity of \eqref{eq:Q-I-diagram} imples that $\rmI_{q+1}^{-(d-j)q,t-jq+1} = 0$. Thus, there are no nontrivial differentials originating at $\rmI^{-dq,t}$ from the $(q+1)$-page onward, so $\rmI_{q+1}^{-dq,t} = \rmI_{\infty}^{-dq,t}$, and $\rmI_{\infty}^{\bullet,\bullet} = 0$ by the exactness of the complex $\Kz^{(1)}$. By entirely similar reasoning, one deduces that $\rmI_{q+1}^{-(d-1)q,t-q+1} = \rmI_{\infty}^{-(d-1),t-q+1} = 0$. (Note that the only incoming differentials to $\rmI^{-(d-1)q,\bullet}$ occur on the $\rmI_q$-page.)

Now consider the following diagram obtained by extending \eqref{eq:Q-I-diagram} to the left:
	\begin{equation} %\label{eq:FFSS-4.9.1-extended}
	\vcenter{\xymatrix{
	0 \ar@{->}[r] \ar@{->}[d] & 0 \ar@{->}[r] \ar@{->}[d] & Q_d^d(\theta)^t \ar@{->}[r]^-{\kappa(\theta)} \ar@{->}[d]^{\alpha} & Q_d^{d-1}(\theta)^{t-q+1} \ar@{->}[r]^-{\kappa(\theta)} \ar@{->}[d]^{\beta} & Q_d^{d-2}(f_\ell)^{t-2q+2} \ar@{->}[d]^{\gamma} \\
	0 \ar@{->}[r] & 0 \ar@{->}[r] & \rmI_q^{-dq,t} \ar@{->}[r]^-{d_q} & \rmI_q^{-(d-1)q,t-q+1} \ar@{->}[r]^-{d_q} & \rmI_q^{-(d-2)q,t-2q+2}.
	}}
	\end{equation}
The top row is exact by the exactness of $Q_d^\bullet(\theta)$, and the bottom row is exact by the observations of the previous paragraph. The cup product maps $\beta$ and $\gamma$ are isomorphisms by the induction hypothesis on $d$ and $t$ (and possibly also by $1_{t,0}$, if $r=1$ and $d=2$). Then by the Five Lemma, $\alpha$ is also an isomorphism. In other words, the cup product map $[\Lambda^d(W_{1,0})]^t \to \Ext_{\bsp}^t(\Gamma_0^{d(r)},\Lambda_0^{dp^{r-1}(1)})$ is an isomorphism, so statement $2_{t,0}$ is true.

In the case $\ell = 1$ and $C = \Kz_{dp^{r-1}}^{\bullet(1)}\subgrp{dp^{r-1}}$, the changes to the argument are effectively just notational. The spectral sequence $\rmI(\Gamma_1^{d(r)},C)$ is now located in the first quadrant, with
	\[
	\rmI_1^{m,n} \cong
	\begin{cases} 
	\left[\Ext_{\bsp}^\bullet(\Gamma_1^{s(r)},S_0^{sq(1)}) \otimes \Ext_{\bsp}^\bullet(\Gamma_1^{d-s(r)},\Lambda_0^{(d-s)q(1)}) \right]^n & \text{if $m = s q$, $0 \leq s \leq d$,} \\
	0 & \text{if $m \not\equiv 0 \mod q$.}
	\end{cases}
	\]
In particular, $V_{1,1}$ (resp. $W_{1,1}$) is concentrated in terms of even (resp.\ odd) total degree in the $\rmI_1$-page of the spectral sequence $\rmI(\bsirone,\Kz_q^{\bullet(1)}\subgrp{p^{r-1}})$. The diagram \eqref{eq:Q-I-diagram} now takes the form
	\begin{equation} \label{eq:Q-I-diagram-ell-1}
	\vcenter{\xymatrix{
	Q_d^d(\theta)^\bullet \ar@{->}[r]^-{\kappa(\theta)} \ar@{->}[d] & Q_d^{d-1}(\theta)^\bullet \ar@{->}[r]^-{\kappa(\theta)} \ar@{->}[d]^{\cong} & \cdots \ar@{->}[r]^-{\kappa(\theta)} & Q_d^1(\theta)^\bullet \ar@{->}[r]^-{\kappa(\theta)} \ar@{->}[d]^{\cong} & Q_d^0(\theta)^\bullet \ar@{->}[d] \\
	\rmI_q^{0,\bullet} \ar@{->}[r]^-{d_q} & \rmI_q^{q,\bullet} \ar@{->}[r]^-{d_q} & \cdots \ar@{->}[r]^-{d_q} & \rmI_q^{(d-1)q,\bullet} \ar@{->}[r]^-{d_q} & \rmI_q^{dq,\bullet}.
	}}
	\end{equation}
One argues that $\rmI_{q+1}^{0,t} = \rmI_{\infty}^{0,t} = 0$ and $\rmI_{q+1}^{q,t-q+1} = \rmI_{\infty}^{q,t-q+1} = 0$, and then extends \eqref{eq:Q-I-diagram-ell-1} to the left by zeros and applies the Five Lemma to deduce that the cup product $Q_d^d(\theta)^t \to \rmI_q^{0,t}$ is an isomorphism, thus establishing that statement $2_{t,1}$ is true.

\subsection{The case \texorpdfstring{$j \geq 2$}{j > 1} of Theorem \ref{theorem:FFSS-4.5}} \label{subsec:proof-j>1}

Now suppose $j \geq 2$. As alluded to earlier in the proof of Corollary \ref{cor:FFSS-4.2}, the only nonzero rows in the spectral sequence $\rmII(\bsirell,\Omega_{p^{r-j+1}}^{\bullet(j-1)})$ are the rows $t=0$ and $t=p^{r-j}$, and the only nontrivial differential is
	\[
	\partial : \Ext_{\bsp}^{s-p^{r-j}-1}(\bsirell,\Lambda_0^{p^{r-j}(j)}) \cong \rmII_2^{s-p^{r-j}-1,p^{r-j}} \xrightarrow{d_{p^{r-j}+1}} \rmII_2^{s,0} \cong \Ext_{\bsp}^s(\bsirell,S_0^{p^{r-j}(j)}).
	\]
We interpret $\partial$ as a map of graded spaces $\partial: W_{j,\ell} \to V_{j,\ell}$ that increases $\Z$-degrees by $p^{r-j}+1$. Then by \cite[Theorem 4.6.1]{Drupieski:2016}, $\partial$ fits into an exact sequence
	\[
	0 \longrightarrow W_{j-1,\ell} \stackrel{\alpha}{\longrightarrow} W_{j,\ell} \stackrel{\partial}{\longrightarrow} V_{j,\ell} \stackrel{\beta}{\longrightarrow} V_{j-1,\ell} \longrightarrow 0,
	\]
where $\alpha$ increases $\Z$-degrees by $p^{r-j+1}-p^{r-j}$, and $\beta$ is the map in cohomology induced by the $p$-power map $S_0^{p^{r-j}(j)} \hookrightarrow S_0^{p^{r-j+1}(j-1)}$. Set $K_{j,\ell} = \ker(\partial) \cong W_{j-1,\ell}\subgrp{p^{r-j+1}-p^{r-j}}$, and set $C_{j,\ell} = \coker(\partial) \cong V_{j-1,\ell}$. Then $\partial$ defines a generalized Koszul differential on $Q(\partial) := S(V_{j,\ell}) \otimes \Lambda(W_{j,\ell})$, and by \cite[Lemma 4.3]{Franjou:1999} one has $\Hbul(Q(\partial)) \cong S(C_{j,\ell}) \otimes \Lambda(K_{j,\ell})$.

The case $j \geq 2$ of Theorem \ref{theorem:FFSS-4.5} is now handled by (essentially) a word-for-word repetition of the inductive argument given in \cite[p.\ 696--701]{Franjou:1999}, which we echoed already for the case $j=1$ in Section \ref{subsec:proof-j=1}. First, as in \cite[Lemma 4.6]{Franjou:1999}, one considers the spectral sequence $\rmI(\bsirell,\Omega_{dp^{r-j+1}}^{(j-1)})$, and argues by induction on $j$, using Corollary \ref{cor:FFSS-4.2}, to deduce that $\rmI_1 = \rmI_\infty$. Next, one considers the spectral sequences $\rmII(\Gamma_\ell^{d(r)},\Omega_{dp^{r-j+1}}^{\bullet(j-1)})$ and $\rmI(\Gamma_\ell^{d(r)},\Kz_{dp^{r-j}}^{\bullet(j)})$ to argue by induction on $t$ that the following statements hold:
\begin{itemize}
\item[$1_{t,\ell}$.] The cup product map $[S^d(V_{j,\ell})]^t \to \Ext_{\bsp}^t(\Gamma_\ell^{d(r)},S_0^{dp^{r-j}(j)})$ is an isomorphism.

\item[$2_{t,\ell}$.] The cup product map $[\Lambda^d(W_{j,\ell})]^t \to \Ext_{\bsp}^t(\Gamma_\ell^{d(r)},\Lambda_0^{dp^{r-j}(j)})$ is an isomorphism.

\item[$3_{t,\ell}$.] In the spectral sequence $\rmII(\Gamma_\ell^{d(r)},\Omega_{dp^{r-j+1}}^{\bullet(j-1)})$, one has $\rmII_{p^{r-j}+2}^{n,m} = \rmII_{\infty}^{n,m}$ for all $n \leq t-p^{r-j}-1$.
\end{itemize}
Finally, using the conjugation action of $\bsPi$, one deduces that the other two cup product maps in Theorem \ref{theorem:FFSS-4.5} are also isomorphisms.

\begin{remark} \label{rem:unstated-theorem}
In \cite[\S5]{Franjou:1999}, Franjou, Friedlander, Scorichenko, and Suslin compute all extension groups in $\calP$ from a Frobenius twist $X^{*(s)} = X^* \circ I^{(s)}$ of a `more projective' exponential functor $X^*$ to a Frobenius twist $Y^{*(t)} = Y^* \circ I^{(t)}$ of a `less projective' exponential functor $Y^*$. Replacing each of $I^{(s)}$ and $I^{(t)}$ with either the even or odd Frobenius twist functors of the category $\bsp$, the calculations in \cite[\S5]{Franjou:1999} ought to generalize via the same lines of reasoning to the category of strict polynomial superfunctors.
\end{remark}

\makeatletter
\renewcommand*{\@biblabel}[1]{\hfill#1.}
\makeatother

\bibliographystyle{eprintamsplain}
\bibliography{complexes-and-cohomology}

\providecommand{\bysame}{\leavevmode\hbox to3em{\hrulefill}\thinspace}
\providecommand{\MR}{\relax\ifhmode\unskip\space\fi MR }
% \MRhref is called by the amsart/book/proc definition of \MR.
\providecommand{\MRhref}[2]{%
  \href{http://www.ams.org/mathscinet-getitem?mr=#1}{#2}
}
\providecommand{\href}[2]{#2}
\begin{thebibliography}{10}

\bibitem{Axtell:2013}
J.~Axtell, \href{http://dx.doi.org/10.1090/S1088-4165-2013-00445-4} {\emph{Spin
  polynomial functors and representations of {S}chur superalgebras}},
  Represent. Theory \textbf{17} (2013), 584--609.

\bibitem{Bourbaki:2003}
N.~Bourbaki, \emph{Algebra {II}. {C}hapters 4--7}, Elements of Mathematics
  (Berlin), Springer-Verlag, Berlin, 2003, Translated from the 1981 French
  edition by P. M. Cohn and J. Howie, Reprint of the 1990 English edition.

\bibitem{Chal-upnik:2005}
M.~Cha\l~upnik, \href{https://doi.org/10.1016/j.ansens.2005.07.002}
  {\emph{Extensions of strict polynomial functors}}, Ann. Sci. \'{E}cole Norm.
  Sup. (4) \textbf{38} (2005), no.~5, 773--792.

\bibitem{Chal-upnik:2008}
\bysame, \href{https://doi.org/10.1016/j.aim.2008.02.008} {\emph{Koszul duality
  and extensions of exponential functors}}, Adv. Math. \textbf{218} (2008),
  no.~3, 969--982.

\bibitem{djament2021functor}
A.~Djament and A.~Touz{\'e}, \emph{Functor homology over an additive category},
  2021, \href {http://arxiv.org/abs/2111.09719} {\path{arXiv:2111.09719}}.

\bibitem{Drupieski:2016}
C.~M. Drupieski, \href{http://dx.doi.org/10.1016/j.aim.2015.11.017}
  {\emph{Cohomological finite generation for finite supergroup schemes}}, Adv.
  Math. \textbf{288} (2016), 1360--1432.

\bibitem{Drupieski:2019b}
C.~M. Drupieski and J.~R. Kujawa,
  \href{https://doi.org/10.1016/j.aim.2019.03.014} {\emph{Graded analogues of
  one-parameter subgroups and applications to the cohomology of
  {$GL_{m|n(r)}$}}}, Adv. Math. \textbf{348} (2019), 277--352.

\bibitem{Evseev:2017}
A.~Evseev and A.~Kleshchev,
  \href{https://doi-org.ezproxy.lib.ou.edu/10.1016/j.aim.2017.07.012}
  {\emph{Turner doubles and generalized {S}chur algebras}}, Adv. Math.
  \textbf{317} (2017), 665--717.

\bibitem{Franjou:1999}
V.~Franjou, E.~M. Friedlander, A.~Scorichenko, and A.~Suslin,
  \href{http://dx.doi.org/10.2307/121092} {\emph{General linear and functor
  cohomology over finite fields}}, Ann. of Math. (2) \textbf{150} (1999),
  no.~2, 663--728.

\bibitem{Franjou:1994}
V.~Franjou, J.~Lannes, and L.~Schwartz,
  \href{http://dx.doi.org/10.1007/BF01231771} {\emph{Autour de la cohomologie
  de {M}ac {L}ane des corps finis}}, Invent. Math. \textbf{115} (1994), no.~3,
  513--538.

\bibitem{Friedlander:1997}
E.~M. Friedlander and A.~Suslin, \href{http://dx.doi.org/10.1007/s002220050119}
  {\emph{Cohomology of finite group schemes over a field}}, Invent. Math.
  \textbf{127} (1997), no.~2, 209--270.

\bibitem{Kapranov:1996}
M.~M. Kapranov, \emph{On the q-analog of homological algebra}, unpublished,
  1996, \href {http://arxiv.org/abs/q-alg/9611005}
  {\path{arXiv:q-alg/9611005}}.

\bibitem{Kleshchev:2020}
A.~Kleshchev and R.~Muth,
  \href{https://doi-org.ezproxy.lib.ou.edu/10.2140/ant.2020.14.501}
  {\emph{Generalized {S}chur algebras}}, Algebra Number Theory \textbf{14}
  (2020), no.~2, 501--544.

\bibitem{Tikaradze:2002}
A.~Tikaradze, \href{https://doi.org/10.1016/S0022-4049(02)00076-2}
  {\emph{Homological constructions on {$N$}-complexes}}, J. Pure Appl. Algebra
  \textbf{176} (2002), no.~2-3, 213--222.

\bibitem{Touze:2010a}
A.~Touz{\'e}, \href{http://dx.doi.org/10.1215/00127094-2009-064}
  {\emph{Universal classes for algebraic groups}}, Duke Math. J. \textbf{151}
  (2010), no.~2, 219--249.

\bibitem{Touze:2012}
\bysame, \emph{Troesch complexes and extensions of strict polynomial functors},
  Ann. Sci. \'Ec. Norm. Sup\'er. (4) \textbf{45} (2012), no.~1, 53--99.

\bibitem{Touze:2014b}
\bysame, \href{https://doi-org.ezproxy.lib.ou.edu/10.1090/conm/617/12299}
  {\emph{Applications of functor (co)homology}}, An alpine expedition through
  algebraic topology, Contemp. Math., vol. 617, Amer. Math. Soc., Providence,
  RI, 2014, pp.~259--277.

\bibitem{Touze:2014}
\bysame, \emph{Bar complexes and extensions of classical exponential functors},
  Ann. Inst. Fourier (Grenoble) \textbf{64} (2014), no.~6, 2563--2637.

\bibitem{Touze:2010b}
A.~Touz{\'e} and W.~van~der Kallen,
  \href{http://dx.doi.org/10.1215/00127094-2009-065} {\emph{Bifunctor
  cohomology and cohomological finite generation for reductive groups}}, Duke
  Math. J. \textbf{151} (2010), no.~2, 251--278.

\bibitem{Troesch:2005}
A.~Troesch, \href{http://aif.cedram.org/item?id=AIF_2005__55_5_1587_0}
  {\emph{Une r\'esolution injective des puissances sym\'etriques tordues}},
  Ann. Inst. Fourier (Grenoble) \textbf{55} (2005), no.~5, 1587--1634.

\bibitem{Webb:1985}
C.~Webb, \href{https://doi.org/10.2307/2044864} {\emph{Decomposition of graded
  modules}}, Proc. Amer. Math. Soc. \textbf{94} (1985), no.~4, 565--571.

\end{thebibliography}

\end{document}